\documentclass[11pt,a4paper]{article}

\usepackage{graphicx,amssymb,amsfonts,latexsym,amsmath,amsthm,times,subfigure}
\usepackage{authblk} 
\usepackage{epsfig}
\usepackage{epstopdf}
\usepackage{fancyhdr}

\usepackage{color}
\usepackage{xcolor}
\definecolor{darkred}{rgb}{0.6,0,0} 

\usepackage{lscape}
\usepackage[english]{babel}
\usepackage{authblk}
\usepackage{fancyhdr}
\usepackage{url}
\usepackage[utf8]{inputenc}
\usepackage{amsmath}
\usepackage{amsfonts}
\usepackage{amssymb}
\usepackage{tabularx}
\usepackage{float}
\usepackage{verbatim}
\usepackage{rotating}
\usepackage{graphics}
\usepackage{hyperref}
\usepackage{extarrows}
\usepackage{xcolor}
\usepackage{lscape}
\usepackage{multirow}
\usepackage{afterpage}
\usepackage{longtable}
\usepackage[T1]{fontenc}
\usepackage{calligra}
\usepackage{algorithm}
\usepackage{algpseudocode}
\usepackage{bm}
\usepackage{mathtools}
\usepackage[top=2.5cm,bottom=2.5cm,left=2.5cm,right=2.5cm]{geometry}
 \usepackage{hyperref}

\usepackage{url}  
\usepackage{tikz} 

\algrenewcommand\algorithmicindent{1.0em}

\allowdisplaybreaks

\newcommand{\Ealpha}{E_{\alpha}}
\newtheorem{thm}{Theorem}[section]
\newtheorem{lem}[thm]{Lemma}
\newtheorem{defn}{Definition}[section]
\newtheorem{rem}{Remark}

\DeclareMathOperator{\mem}{mem} 
\DeclareMathOperator{\tr}{tr} 
\DeclareMathOperator{\interior}{int} 
\DeclareMathOperator{\Real}{Re}

\begin{document}
\title{Structure-Preserving 
Schemes for a Fractional SVIR Epidemic Model with a Hybrid Mittag-Leffler-Caputo-Fabrizio Operator}

\author{Seham M. Al-Mekhlafi$^{1}$, Ahmed Boudaoui$^{2},$ Matthias Ehrhardt$^{3,}$\footnote{Corresponding Author, email: ehrhardt@uni-wuppertal.de}}
	\date{$^1$Department of Mathematics, Faculty of Education, Sana'a University, Sana'a, Yemen\\ 
		$^2$ Mathematics Modeling and Applications Laboratory,  University of Adrar, Adrar, Algeria\\
       $^3$ Applied and Computational Mathematics, University of Wuppertal, Gaußstrasse 20, Wuppertal, 42119, Germany\\
		E-mail: sih.almikhlafi@su.edu.ye$^1,$   ahmedboudaoui@univ-adrar.edu.dz$^2$,
        ehrhardt@uni-wuppertal.de$^{3,*}$}
	
	\maketitle

    \begin{tikzpicture}[remember picture,overlay]
\node[anchor=north east,inner sep=20pt] at (current page.north east)
	{\includegraphics[scale=0.2]{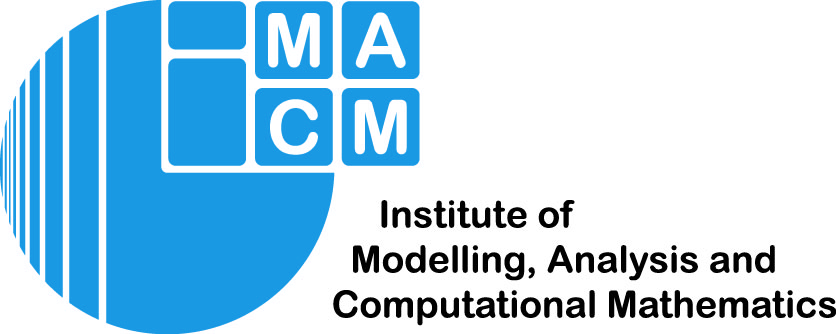}};
\end{tikzpicture}

\begin{abstract}
This paper proposes and analyzes a fractional-order SVIR epidemic model based on a hybrid Mittag-Leffler-Caputo-Fabrizio (MLCF) fractional operator with a nonsingular kernel. 
This model captures short- and long-term memory effects in epidemic transmission dynamics. 
The positivity and boundedness of the solutions are proven through an integrated formulation of the MLCF operator and a fractional Gronwall inequality. 
The basic reproduction number $\mathcal{R}_0$, equilibrium points, and their local and global stability properties are rigorously investigated through Jacobian analysis, logarithmic Lyapunov functionals, and a fractional LaSalle invariance principle.

To approximate the model, a $\theta$-weighted nonstandard finite difference (NSFD) method is developed. 
This method preserves the continuous system's key qualitative properties, including positivity and boundedness, and is unconditionally stable in the fully implicit case. 
Consistency and first-order convergence are also proven. 
Numerical experiments, together with sensitivity and bifurcation analyses, illustrate the impact of fractional memory parameters on epidemic evolution and demonstrate the effectiveness of the proposed approach.
\end{abstract}

\textbf{Keywords:} Fractional SVIR epidemic model; Mittag-Leffler-Caputo-Fabrizio derivative;\linebreak  Non-singular kernel;  $\theta$-weighted scheme;  Sensitivity analysis; Bifurcation analysis; Memory effects.

\section{Introduction}
Mathematical modeling is increasingly being used as a primary tool to understand and control the spread of infectious diseases. 
One of the earliest classical compartmental models is the SIR scheme by Kermack and McKendrick \cite{Kermack1927}, which set the basis for contemporary epidemiological modeling.
Changes to the model, including factors such as vaccination and immunity loss, have produced more realistic versions, such as the SVIR model. This model is useful for studying diseases with immunization strategies \cite{Brauer2019, Hethcote2000}.
However, classical integer-order models struggle to represent the memory and hereditary effects that are essential to bioscience and epidemiological processes.
Recently, fractional calculus has been recognized as a suitable mathematical tool for modeling such phenomena due to its nonlocal character \cite{Diethelm2010, Podlubny1999}. 
Fractional-order epidemic models have demonstrated their ability to more accurately portray long-term dynamics and transmission mechanisms dependent on memory \cite{Chen2021, Yusuf2018}.

Traditional fractional derivatives, such as the Caputo operator, involve singular kernels that may hinder their applicability to certain real-life systems. 
Therefore, non-singular fractional operators have been developed, mainly the \textit{Caputo-Fabrizio} and \textit{Atangana-Baleanu derivatives}  \cite{AtanganaBaleanu2016, Caputo2015}, to overcome these limitations. 
By removing kernel singularities, these operators offer enhanced numerical stability and a more accurate physical interpretation. 
Additionally, the Mittag-Leffler function \cite{Gorenflo2014} has been instrumental in developing generalized fractional operators with superior modeling capabilities.
Recently, fractional operators, which are a hybrid of Mittag-Leffler and Caputo-Fabrizio kernels, have been proposed as a model for various memory effects. 
This new type of hybrid enables the simultaneous depiction of long- and short-term memory effects, which are characteristic of complex dynamical systems, such as epidemics \cite{Kavitha2025}.

A qualitative analysis of fractional epidemic models in terms of dynamical systems is crucial for determining their biological significance. 
Basic features such as positivity, boundedness, and the existence of invariant regions help make the model well-posed. 
The \textit{basic reproduction number} $\mathcal{R}_0$ is the primary threshold parameter that determines whether an infection persists or is eradicated \cite{Diekmann1990}.
Stability analysis of fractional systems requires modified versions of classical techniques and Lyapunov methods, as well as generalized stability criteria for fractional-order systems, cf.\ e.g.\ \cite{Li2010, Matignon1996}.

In computing, solving fractional differential equations remains a significant challenge due to their memory-intensive nature. 
Mickens's \textit{nonstandard finite difference} (NSFD) schemes have been shown to successfully maintain qualitative features such as positivity and boundedness \cite{Mickens2005}. 
Along with proper discretizations of fractional operators, these schemes serve as reliable instruments for the numerical study of epidemiological models. 
Recent research indicates that schemes that preserve the structure of fractional epidemic systems are crucial for accurately depicting their dynamics \cite{Shafqat2026}.

This paper presents and examines a fractional-order SVIR epidemic model driven by a hybrid \textit{Mittag-Leffler-Caputo-Fabrizio} (MLCF) operator. 
This model can capture short- and long-term memory effects within a unified, nonsingular framework. 
We characterize the disease-free and endemic equilibria and derive 
$\mathcal{R}_0$ using the next-generation matrix approach \cite{Van}. 
Local and global stability properties are rigorously investigated through Jacobian analysis, logarithmic Lyapunov functionals, and a fractional LaSalle invariance principle. 
Additionally, we propose a $\theta$-weighted 
NSFD method tailored to the MLCF operator.
This method combines forward differences, trapezoidal quadrature, and recursive memory updates to efficiently compute the model. 
The resulting scheme preserves the continuous model's essential qualitative properties, including positivity, boundedness, and unconditional stability in the fully implicit case. Finally, numerical simulations, sensitivity analyses, and bifurcation investigations demonstrate the influence of fractional memory parameters on epidemic dynamics and the effectiveness of the proposed approach for fractional epidemiological modeling.

The paper is organized as follows. Section \ref{sec:SVIRmodel} presents the proposed fractional SVIR model and the necessary preliminaries. Section \ref{S3} investigates the qualitative properties of the model, including positivity, boundedness, and stability analysis. In Section \ref{S4}, a $\theta$-weighted NSFD scheme is developed for the numerical approximation of the model. Section \ref{S5} is devoted to the analysis of the numerical method, including stability and convergence results. Sensitivity and bifurcation analyses are presented in Section \ref{S6}, while numerical simulations are provided in Section \ref{S7}. Finally, concluding remarks are given in Section \ref{S8}.
\section{Mathematical Formulation of the Fractional SVIR Model}\label{sec:SVIRmodel}
\subsection{Preliminaries}\label{subsec21}
First, let us revisit some fundamental definitions from the literature that are relevant to this work.
\begin{defn}[\cite{Hammouch2025}]\label{df1}
Let $0 < v < 1$. The Caputo fractional derivative for $u(t)$ is defined as 
\begin{equation}\label{def21}
    ^C D_t^v [u(t)] = \frac{1}{\Gamma(1 - v)} \int_0^{t} (t - \kappa)^{-v} \,u'(\kappa) \,d\kappa,
\end{equation}
where $t>0$, $u\in C[0,t]$, with $\Gamma(\cdot)$ denoting the Gamma function.
\end{defn}

 \begin{defn}[\cite{Caputo2015}]\label{df2}
 Let $0<v<1$. The Caputo-Fabrizio derivative is expressed as follows:
\begin{equation}\label{def22}
    ^{\rm CF}D_t^v [u(t)] = 
    \frac{M(v) \exp \Bigl[ -v \frac{t}{1 - v} \Bigr]}{1 - v} \int_0^{t} u'(\kappa) \exp \Bigl[ v \frac{\kappa}{1 - v} \Bigr] \,d\kappa,
\end{equation}
with $M(v)$ serving as a normalization function. 
\end{defn}
The normalization function $M(v)$ in \eqref{def22} ensures the consistency of the fractional operator by recovering the original function as $v\to0^+$ and the classical first derivative as $v\to1^-$, with $M(0)=M(1)=1$. 
It also guarantees dimensional consistency and properly normalizes the exponential kernel through the factor $\frac{M(v)}{1-v}$.
Furthermore, we have $u\in H^1(a,b)$), 
and the Definition~\ref{df2} is characterized by a non-singular kernel.

\begin{defn}[\cite{Sadek2025}]\label{df3}
Let $0<\nu<1$ and $0<\alpha\le1$. 
For a function $u\in H^1(a,b)$, the hybrid Mittag-Leffler-Caputo-Fabrizio derivative is defined by
\begin{equation}\label{eq:MLCFdef}
{}^{\rm MLCF}\!D_t^{\nu,\alpha}u(t)
= \frac{M(\nu)\,\Ealpha\Bigl(\dfrac{\nu}{1-\nu}t^{\alpha}\Bigr)^{-1}}{1-\nu}
\int_0^t \Ealpha\Bigl(\dfrac{\nu}{1-\nu}\kappa^{\alpha}\Bigr) u'(\kappa)\,d\kappa,
\end{equation}
where $\Ealpha(z)=\sum_{k=0}^{\infty}\frac{z^{k}}{\Gamma(\alpha k+1)}$ denotes the one-parameter Mittag-Leffler function and $M(\nu)$ satisfies $M(0)=M(1)=1$.
\end{defn}
\begin{rem}  
For the case $\alpha=1$, we have $E_1(z)=e^{z}$ and \eqref{eq:MLCFdef} reduces to the Caputo–Fabrizio derivative.
\end{rem}

\subsection{Hybrid Fractional Model with Nonsingular Kernels}\label{sec:hybrid_model}
Here, we present the problem to be studied. To accomplish this, we extend the integer-order, deterministic susceptible-vaccinated-infected-recovered (SVIR) epidemic model presented in \cite{Kad} to a fractional-order SVIR model using the hybrid MLCF fractional derivative
\begin{subequations}\label{eq:SVIR}
\begin{align}
{}^{\rm MLCF}\!D_t^{\nu,\alpha} S(t) &= \Delta - \beta \dfrac{S I}{N} + \mu R - (k+\delta)S,
\label{eq:S}\\
{}^{\rm MLCF}\!D_t^{\nu,\alpha} V(t) &= kS - (1-\tau)\beta \dfrac{V I}{N} - \delta V,\label{eq:V}\\
{}^{\rm MLCF}\!D_t^{\nu,\alpha} I(t) &= \beta \dfrac{S I}{N} + (1-\tau)\beta \dfrac{V I}{N} - (\alpha_r+\delta+\delta_0)I,\label{eq:I}\\
{}^{\rm MLCF}\!D_t^{\nu,\alpha} R(t) &= \alpha_r I - (\delta+\mu)R, \label{eq:R}
\end{align}
\end{subequations}
where the total population $N=S+V+I+R$ and all parameters are positive constants. 
The fractional orders satisfy $0<\nu<1$ and $0<\alpha\le 1$;
the normalization factor $M(\nu)$ is usually taken as 1.
For more information on the integer-order model, including the basic reproduction number $\mathcal{R}_0$, disease-free equilibrium, and endemic equilibrium, 
we refer the interested reader to \cite{Kad}.

The introduction of the hybrid MLCF derivative allows the model to incorporate both short- and long-term memory effects within a unified non-singular framework. 
This is particularly relevant in the context of infectious diseases, where the current dynamics are often influenced by past states due to latency periods, temporary immunity, behavioral changes, and intervention delays. 
Compared with the classical integer-order SVIR model, the fractional formulation provides a more accurate description of the temporal evolution of the epidemic and can yield different transient dynamics while preserving the same equilibrium structure. 
This motivates the present study, where we develop and analyze the proposed model and construct structure-preserving numerical schemes for its efficient simulation.

\begin{table}[H]
\begin{center}
	\begin{tabular}{|c|c|}
		\hline
		parameters & Description \\
		\hline
		$\Delta$ &   birth rate \\
		$k$ & vaccination rate  \\
		$\beta$ & contact rate between infected and susceptible persons\\
		$\tau$ & vaccine effectiveness rate \\
		$\mu$  & loss of natural immunity \\ 
		$\delta_{0}$ & death rate due to Covid-19 infection \\
		$\delta$ & natural death rate\\
		$\alpha$ & rate of recovery from infection\\
		\hline    
	\end{tabular} 
     \end{center}
     \caption{Description of model parameters used in the fractional SVIR epidemic model \cite{Kad}.}
    \end{table}

\section{Dynamical Analysis of the Model}
\label{S3}
In this section we provide a concise analysis of the proposed SVIR model~\eqref{eq:SVIR}.
First, the MLCF derivative \cite{Sadek2025} of a function $u\in H^{1}(0,T)$ is defined in \eqref{eq:MLCFdef}, 
where $\Ealpha(z)=\sum_{k=0}^{\infty}\tfrac{z^{k}}{\Gamma(\alpha k+1)}$
is the one-parameter Mittag-Leffler function and $M(\nu)$ is a
normalization function satisfying $M(0)=M(1)=1$. 
We introduce the convenient notation
\begin{equation*}
   A(t) = \Ealpha\Bigl(-\frac{\nu}{1-\nu}t^{\alpha}\Bigr),\qquad
   B(t) = \Ealpha\Bigl(\frac{\nu}{1-\nu}t^{\alpha}\Bigr).
\end{equation*}
\begin{lem}[Integrated Form of the MLCF Derivative]\label{lem:integrated}
  Let $0<\nu<1$, $0<\alpha\le1$, and $u\in C([0,T])\cap H^{1}(0,T)$.
  For any $t\in[0,T]$,
  \begin{equation}\label{eq:integrated}
    u(t) = \frac{1-\nu}{M(\nu)}\,\frac{{}^{\rm MLCF}\!D^{\nu,\alpha}_tu(t)}{A(t)B(t)}
          +\frac{u(0)}{B(t)}
          +\frac{1}{B(t)}\int_0^t B'(\kappa)u(\kappa)\,d\kappa.
  \end{equation}
  Moreover, for $0<\alpha\le1$ we have
  \begin{equation}\label{eq:positivity}
    A(t)>0,\quad B(t)>0,\quad B'(\kappa)\ge0\quad\text{for all}\;\kappa \ge0.
  \end{equation}
\end{lem}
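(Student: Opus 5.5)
The identity \eqref{eq:integrated} should follow from a single integration by parts in the defining integral \eqref{eq:MLCFdef}, after which the sign statements \eqref{eq:positivity} reduce to known properties of the Mittag-Leffler function.

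\textbf{Reading of the prefactor.} We read the prefactor $\Ealpha\bigl(\tfrac{\nu}{1-\nu}t^{\alpha}\bigr)^{-1}$ in \eqref{eq:MLCFdef} as the decaying kernel $A(t)=\Ealpha\bigl(-\tfrac{\nu}{1-\nu}t^{\alpha}\bigr)$. This is the reading under which the factor $A(t)B(t)$ appears in \eqref{eq:integrated}, and it coincides with the literal reciprocal $1/B(t)$ exactly when $\alpha=1$, i.e.\ the Caputo--Fabrizio case. With this reading,
\[
{}^{\rm MLCF}\!D^{\nu,\alpha}_tu(t)=\frac{M(\nu)}{1-\nu}\,A(t)\int_0^tB(\kappa)u'(\kappa)\,d\kappa .
\]
Once $A(t)>0$ is known, this gives
\[
\int_0^tB\,u'\,d\kappa=\frac{1-\nu}{M(\nu)}\,\frac{{}^{\rm MLCF}\!D^{\nu,\alpha}_tu(t)}{A(t)} .
\]
If instead the prefactor is taken literally as $1/B(t)$, the same argument goes through and yields \eqref{eq:integrated} with $A(t)B(t)$ replaced by $1$. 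We would point this out explicitly.

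\textbf{Integration by parts.} Since $u\in H^1(0,T)$, $u$ is absolutely continuous on $[0,T]$. The function $B$ is also absolutely continuous there: termwise differentiation of the series gives
\[
B'(\kappa)=\sum_{k\ge1}\frac{\alpha k\,c^k\kappa^{\alpha k-1}}{\Gamma(\alpha k+1)},\qquad c=\frac{\nu}{1-\nu}>0,
\]
which converges locally uniformly on $(0,T]$ and behaves like $\kappa^{\alpha-1}$ near $0$, so $B'\in L^1(0,T)$. The product rule for absolutely continuous functions then gives
\[
\int_0^tBu'\,d\kappa=B(t)u(t)-B(0)u(0)-\int_0^tB'u\,d\kappa,
\]
and $B(0)=\Ealpha(0)=1$. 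Substituting the expression for $\int_0^tBu'\,d\kappa$ from above and dividing by $B(t)>0$ yields \eqref{eq:integrated}.

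\textbf{Sign properties.} The claims $B(t)\ge1>0$ and $B'(\kappa)\ge0$ for $\kappa>0$ are immediate, because every term of the respective series is nonnegative. At $\kappa=0$ with $\alpha<1$, the statement $B'(0)\ge0$ should be understood in the limiting sense, since $B'(\kappa)\to+\infty$ as $\kappa\to0^+$; this does not affect the integral. The only nontrivial point is $A(t)>0$, since $\Ealpha(-x)$ is an alternating series. For this I would invoke Pollard's theorem: for $0<\alpha\le1$, $x\mapsto \Ealpha(-x)$ is completely monotone on $[0,\infty)$. It is the Laplace transform of a nonnegative, nontrivial measure, hence strictly positive; for $\alpha=1$ it is simply $e^{-x}$.

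I expect this step, together with the careful justification of the integration by parts near the weak singularity of $B'$ at $0$, to be the main obstacle. Everything else is bookkeeping.
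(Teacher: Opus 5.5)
Your proof is correct and follows the paper's own route: integrate $\int_0^t B(\kappa)u'(\kappa)\,d\kappa$ by parts, use $B(0)=1$, and obtain $A(t)>0$ from the complete monotonicity of $E_\alpha(-x)$ and $B>0$, $B'\ge0$ from the nonnegative series terms. Your remark on the prefactor is justified: the paper's proof silently reads the prefactor in \eqref{eq:MLCFdef} as $A(t)$, which equals the literal $1/B(t)$ only when $\alpha=1$. Your extra care about $B'\in L^1$ near $0$, about the meaning of $B'(0)$ for $\alpha<1$, and about strict positivity of $A$ fills small gaps that the paper passes over.
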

\begin{proof}
  From the definition \eqref{eq:MLCFdef} we have
  \begin{equation*}
    \int_0^t B(\kappa)u'(\kappa)\,d\tau
    =\frac{1-\nu}{M(\nu)}\frac{{}^{\rm MLCF}\!D^{\nu,\alpha}_tu(t)}{A(t)}.
  \end{equation*}
  Integration by parts yields
  \begin{equation*}
    \int_0^t B(\kappa)u'(\kappa)\,d\kappa
    = B(t)u(t) -B(0)u(0)
       -\int_0^t B'(\kappa)u(\kappa)\,d\kappa.
  \end{equation*}
  Since $B(0)=1$, solving for $u(t)$ yields \eqref{eq:integrated}.
  The positivity of $A(t)$ follows from the complete monotonicity of
  $E_{\alpha}(-x)$ on $[0,\infty)$ for $0<\alpha\le1$, cf.\  \cite{Gorenflo2014},
  while $B(t)>0$ because all terms in its series are non‑negative.
  Differentiating term-by-term gives the following for $B'(\kappa)$
  \begin{equation*}
    B'(\kappa)=\sum_{k=1}^{\infty}\frac{\bigl(\frac{\nu}{1-\nu}\bigr)^{k}
                \alpha k\,\kappa^{\alpha k-1}}{\Gamma(\alpha k+1)}\ge0,
  \end{equation*}
  as each term is non‑negative. This completes the proof.
\end{proof}

Next, we recall the following \textit{standard comparison principle} for
integro-differential equations with positive kernels.
\begin{lem}[A generalized Gronwall inequality \cite{Zhang2016}]\label{lem:gronwall}
  Let $\phi\in C([0,T])$, $\phi_{0}\ge0$, $c\ge0$, and suppose
  $\psi(t)\ge0$ is a non‑negative function. If
  \begin{equation*}
    \phi(t)\ge\phi_{0}
      - c\int_{0}^{t}\psi(\kappa)\,\phi(\kappa)\,d\kappa,\qquad t\in[0,T],
  \end{equation*}
  then $\phi(t)\ge0$ for all $t\in[0,T]$.
\end{lem}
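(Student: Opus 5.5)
The plan is a first-exit (continuity) argument, which is how the lemma will be used for $S,V,I,R$ in the integrated form \eqref{eq:integrated}. Set $t^{*}=\sup\{t\in[0,T]:\ \phi(s)\ge0\ \text{for all}\ s\in[0,t]\}$. Evaluating the hypothesis at $t=0$ gives $\phi(0)\ge\phi_{0}\ge0$. If $\phi(0)>0$, continuity gives $t^{*}>0$. If $\phi(0)=0$, I would use the hypothesis on a short interval $[0,h]$: there $|\phi|$ is small, so $c\int_0^t\psi\phi$ is of higher order. I would combine this with continuity to show that $\phi$ cannot leave $[0,\infty)$ immediately.

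The second step is to suppose $t^{*}<T$ and derive a contradiction. On $[0,t^{*}]$ we have $\phi\ge0$ and $\psi\ge0$, so $J(t):=\int_0^{t}\psi\phi\ge0$ is nondecreasing there. At $t^{*}$ itself $\phi(t^{*})=0$, and for $t$ slightly larger than $t^{*}$ the hypothesis reads
\[
\phi(t)\ \ge\ \phi_{0}-c\,J(t^{*})-c\int_{t^{*}}^{t}\psi\phi .
\]
The last integral is $o(1)$ as $t\downarrow t^{*}$, by continuity of $\phi$ and local integrability of $\psi$. So negativity of $\phi$ just after $t^{*}$ is excluded as soon as $\phi_{0}-cJ(t^{*})>0$. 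That would contradict the definition of $t^{*}$ and give $\phi\ge0$ on all of $[0,T]$.

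The main obstacle, and I expect it to be a genuine gap rather than a technicality, is the sign of $\phi_{0}-cJ(t^{*})$. The cumulative term $cJ(t^{*})$ collects all the positive mass of $\phi$ on $[0,t^{*}]$, and nothing in the hypothesis as worded prevents it from exceeding $\phi_{0}$. For instance, $\phi$ can be large and positive early on and then dip below zero while the inequality still holds. The hypothesis alone therefore does not close the argument.

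To close it I would first try to extract the missing control from the setting where the lemma is applied. In \eqref{eq:integrated} the integral term enters with a plus sign and kernel $B'\ge0$, by \eqref{eq:positivity}. The negative contributions come only from loss terms of the form $-(\text{rate})\cdot u$, and these are local in time. My first attempt would be to verify, in each application, that the inequality actually holds relative to every intermediate time $s\le t$, which makes the step above work with $\phi(t^{*})$ in place of $\phi_{0}-cJ(t^{*})$. Failing that, I would look for a smallness condition such as $c\int_0^T\psi\,\phi\le\phi_{0}$.
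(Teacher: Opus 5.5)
Your diagnosis is right, and the gap is fatal rather than technical: the lemma is false as stated, so no argument can derive the conclusion from the hypothesis. The paper gives no proof of its own; it only cites a reference, so nothing there supplies the missing control.

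Here is a concrete instance of your ``large early, then dips'' scenario. Take $\psi\equiv1$, $c=1$, $T=3\pi/2$ and $\phi(t)=\phi_0+A\sin t$ with $A>\phi_0\ge0$. Then $\int_0^t\phi=\phi_0t+A(1-\cos t)$, and the hypothesis is equivalent to
\[
A\,(1+\sin t-\cos t)+\phi_0\,t\ \ge\ 0,\qquad t\in[0,3\pi/2].
\]
This holds because $1+\sin t-\cos t=1+\sqrt{2}\,\sin(t-\pi/4)\ge0$ on that interval. Yet $\phi(3\pi/2)=\phi_0-A<0$. Since $\phi(0)=\phi_0$, reading $\phi_0$ as $\phi(0)$ does not help.

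What the hypothesis does control is the integral. Set $G(t)=c\int_0^t\psi\phi$ with $\psi$ locally integrable. Multiplying the hypothesis by $c\psi\ge0$ gives $(G-\phi_0)'\ge-c\psi\,(G-\phi_0)$. Hence $(G-\phi_0)\,e^{c\int_0^t\psi}$ is nondecreasing, and $G(t)\ge\phi_0\bigl(1-e^{-c\int_0^t\psi}\bigr)\ge0$. That is a sign for the weighted integral of $\phi$, not for $\phi$ itself.

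Both of your repairs are sound.

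\emph{Restarted hypothesis.} Assume $\phi(t)\ge\phi(s)-c\int_s^t\psi\phi$ for all $s\le t$, with $\phi(0)\ge0$. Take $s$ to be the last zero before a point where $\phi<0$; on that stretch the hypothesis gives $\phi\ge c\int_s^t\psi|\phi|\ge0$, a contradiction. This also settles your unfinished $\phi(0)=0$ case. Note, however, that \eqref{eq:integrated} is anchored at $t=0$ and does not directly give such a restarted inequality.

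\emph{Smallness condition.} It works even if imposed only at $t=T$. By the monotonicity above, $G(T)\le\phi_0$ forces $G\le\phi_0$ on $[0,T]$, hence $\phi\ge\phi_0-G\ge0$.

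\emph{The application.} Your most useful remark is this one. \eqref{eq:integrated} produces the memory term with a \emph{plus} sign and kernel $B'\ge0$. That is, it gives an inequality $\Phi(t)\ge a(t)+\int_0^tk(t,\kappa)\Phi(\kappa)\,d\kappa$ with $a,k\ge0$. The paper's conversion of this into the minus-sign form of the lemma, in the positivity proof, is unjustified; that is where that proof actually breaks.

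For the plus-sign form your first-exit argument closes. The memory over $[0,t^*]$ is nonnegative, so for $t\in[t^*,t^*+\eta]$ one gets $\Phi^-(t)\le\varepsilon(\eta)\max_{[t^*,t^*+\eta]}\Phi^-$, where $\varepsilon(\eta)=\sup_{t\in[t^*,t^*+\eta]}\int_{t^*}^{t}k(t,\kappa)\,d\kappa$. In the appendix, $\varepsilon(\eta)\le B(t^*+\eta)-B(t^*)\to0$. Choosing $\eta$ with $\varepsilon<1$ forces $\Phi^-\equiv0$ there, contradicting the definition of $t^*$.
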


\subsection{Positivity of Solutions}
\begin{thm}[Positivity of Solutions]\label{thm1}
  Let $(S,V,I,R)$ be the solution of system~\eqref{eq:SVIR}
  with non‑negative initial data
  \begin{equation*}
    S(0)=S_0\ge0,\quad
    V(0)=V_0\ge0,\quad
    I(0)=I_0\ge0,\quad
    R(0)=R_0\ge0.
  \end{equation*}
  Then
  \begin{equation*}
    S(t)\ge0,\quad V(t)\ge0,\quad I(t)\ge0,\quad R(t)\ge0
    \qquad\text{for all}\;t\ge0.
  \end{equation*}
\end{thm}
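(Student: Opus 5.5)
The plan is to apply the integrated form of Lemma~\ref{lem:integrated} to each compartment and then use the comparison principle of Lemma~\ref{lem:gronwall}. Each right-hand side is a ``production minus linear loss'' expression. For $X\in\{S,V,I,R\}$ we write the equation as
\begin{equation*}
{}^{\rm MLCF}\!D_t^{\nu,\alpha}X(t)=P_X(t)-q_X(t)\,X(t),
\end{equation*}
where the gains $P_X$ are nonnegative as long as the other components are nonnegative, and the loss coefficients $q_X$ are bounded. Concretely,
\begin{align*}
P_S&=\Delta+\mu R, & q_S&=\beta I/N+k+\delta,\\
P_V&=kS, & q_V&=(1-\tau)\beta I/N+\delta,\\
P_I&=0, & q_I&=-\beta S/N-(1-\tau)\beta V/N+\alpha_r+\delta+\delta_0,\\
P_R&=\alpha_r I, & q_R&=\delta+\mu.
\end{align*}
For $I$ we instead collect all terms as $I\cdot g(t)$ with a bounded $g$. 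On the region where all components are nonnegative, $|\beta I/N|\le\beta$ and $|\beta S/N|\le\beta$, so every $|q_X|\le C$ for a constant $C$ depending only on the parameters.

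Next, substitute into \eqref{eq:integrated}. Using $A,B>0$ and $B'\ge0$ from \eqref{eq:positivity}, we get
\begin{equation*}
X(t)=\frac{X(0)}{B(t)}+\frac{1-\nu}{M(\nu)}\frac{P_X(t)-q_X(t)X(t)}{A(t)B(t)}+\frac{1}{B(t)}\int_0^tB'(\kappa)X(\kappa)\,d\kappa .
\end{equation*}
Here the algebraic term $q_XX/(AB)$ appears on the left as well. The natural step is to move it to the left, giving
\begin{equation*}
\Bigl(1+\frac{(1-\nu)q_X}{M(\nu)AB}\Bigr)X(t)=\frac{X(0)}{B}+\frac{(1-\nu)P_X}{M(\nu)AB}+\frac1B\int_0^tB'X\,d\kappa .
\end{equation*}
The obstruction is that the bracket on the left must stay positive. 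For $S,V,R$ with $q_X\ge0$ this holds automatically. For $I$, where $q_I$ may be negative, there is a fallback: $I$ solves a linear equation $D I=g(t)I$ with $I(0)\ge0$. If $I_0=0$, uniqueness gives $I\equiv0$. If $I_0>0$, we argue by the first zero time, as below.

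The argument itself is by contradiction via a first exit time. Suppose some component becomes negative, and let $t^*=\inf\{t:\min_X X(t)<0\}$. On $[0,t^*]$ all components are nonnegative, so $P_X\ge0$ and the bounds on $q_X$ hold there. On a short interval $[t^*,t^*+\varepsilon]$, continuity keeps $q_X$ bounded, and the solution there is a small perturbation. The integrated identity then gives an inequality of the form $X(t)\ge \phi_0-c\int_0^t\psi(\kappa)X(\kappa)\,d\kappa$ with $\phi_0\ge0$ and $\psi\ge0$. The memory term with $B'\ge0$ is favourable on the portion where $X\ge0$. Any negative part enters only through the terms $X^-$ on $[t^*,t]$, and these are absorbed into the $c\int\psi X$ form with $\psi=B'/B+|q_X|$ after normalization. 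Lemma~\ref{lem:gronwall} then yields $X\ge0$ on $[t^*,t^*+\varepsilon]$, which contradicts the definition of $t^*$.

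I expect the main obstacle to be this reduction. The MLCF equation produces an \emph{algebraic} (non-integral) occurrence of $X$ through the factor $1/(A B)$, and the Gronwall lemma only handles integral terms. My first attempt would be to exploit the smallness of $\frac{1-\nu}{M(\nu)AB}$ relative to $1$. Failing that, I would differentiate the definition to obtain the equivalent ODE-type relation $\frac{M(\nu)}{1-\nu}B(t)X'(t)=\frac{d}{dt}\bigl(B(t)\,{}^{\rm MLCF}D X/\,\ldots\bigr)$ and argue at the first zero $t^*$ that $X'(t^*)\ge0$, since the gain term is nonnegative and $q_XX=0$ there. This is the standard invariance argument that $\{X=0\}$ cannot be crossed.
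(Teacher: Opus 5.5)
You flag the algebraic occurrence of $X$ as the main obstacle, and that is the right diagnosis. But your plan does not overcome it. Your overall plan (the integrated form of Lemma~\ref{lem:integrated}, a gain/loss split, moving the algebraic term to the left, then a first-exit argument closed by Lemma~\ref{lem:gronwall}) is essentially the paper's, and the decisive step is never carried out.

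\textbf{What goes wrong.} On $[t^*,t^*+\varepsilon]$ you absorb the loss term into $c\int_0^t\psi X\,d\kappa$ with $\psi=B'/B+|q_X|$. This is not legitimate: $\frac{1-\nu}{M(\nu)}\frac{q_X(t)X(t)}{A(t)B(t)}$ is evaluated at the current time $t$, not integrated over the past. Beyond $t^*$ the gains $P_V=kS$ and $P_R=\alpha_rI$ may also already be negative, so a per-component inequality with $\phi_0\ge0$ is not available either. Neither fallback helps:
\begin{itemize}
\item The factor $\frac{1-\nu}{M(\nu)A(t)B(t)}$ is not small; it equals $\frac{1-\nu}{M(\nu)}$ at $t=0$.
\item Differentiating $\int_0^tBX'\,d\kappa=\frac{1-\nu}{M(\nu)}\,{}^{\rm MLCF}\!D_t^{\nu,\alpha}X/A$ gives $BX'=\frac{1-\nu}{M(\nu)}\frac{d}{dt}\bigl(f_X/A\bigr)$. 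At a zero of $X$, $X'(t^*)$ therefore involves $\frac{d}{dt}P_X(t^*)$, which has no sign, not just $P_X(t^*)\ge0$. The ODE quasi-positivity argument does not transfer to this nonlocal operator.
\end{itemize}
Finally, Lemma~\ref{lem:gronwall} in the form $\phi\ge\phi_0-c\int_0^t\psi\phi$, which you and the paper both invoke, does not give $\phi\ge0$. Take $\phi_0=c=\psi=1$ and a continuous $\phi$ equal to $10$ on $[0,1]$, linear on $[1,1.1]$, and equal to $-1$ on $[1.1,5]$; it satisfies the hypothesis on $[0,5]$. The integrated identity in fact produces the memory integral with a plus sign.

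\textbf{The missing idea.} Because the kernel is nonsingular, positivity follows pointwise from the integrated identity, so you need neither a Gronwall step nor derivative information. For strictly positive data, let $t^*$ be the first time some component vanishes, say $X(t^*)=0$, with all components nonnegative on $[0,t^*]$. Since $q_X(t^*)X(t^*)=0$, \eqref{eq:integrated} at $t=t^*$ reads
\[
0=\frac{1-\nu}{M(\nu)}\frac{P_X(t^*)}{A(t^*)B(t^*)}+\frac{X(0)}{B(t^*)}+\frac{1}{B(t^*)}\int_0^{t^*}B'(\kappa)X(\kappa)\,d\kappa .
\]
Every term on the right is nonnegative and $X(0)/B(t^*)>0$, a contradiction. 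The sign of the bracket, and hence of your $q_I$, plays no role here. In any case, avoid that issue as the paper does: treat the incidence term as a nonnegative gain, $P_I=\beta I(S+(1-\tau)V)/N$ and $q_I=\alpha_r+\delta+\delta_0$, which corresponds to the paper's bound $f_I\ge-(\alpha_r+\delta+\delta_0)I$. Components with zero initial value need a separate argument; this is where your uniqueness remark for $I_0=0$ belongs. Compared with the paper, which uses the lower bounds $f_X\ge-L_XX$ to get a positive bracket and then appeals to Lemma~\ref{lem:gronwall}, this pointwise argument is shorter and does not depend on that lemma.
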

The proof is given in the Appendix.





\subsection{Boundedness of solutions}
\begin{thm}[Boundedness]\label{thm2}
For any $t\ge0$, the total population $N(t)=S(t)+V(t)+I(t)+R(t)$ satisfies
\begin{equation*}
0 \le N(t) \le \max\Bigl\{N(0),\,\frac{\Delta}{\delta}\Bigr\}.
\end{equation*}
Consequently, each compartment $S(t),V(t),I(t),R(t)$ is uniformly bounded.
\end{thm}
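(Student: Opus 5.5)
Summing the four equations of \eqref{eq:SVIR} and using linearity of ${}^{\rm MLCF}\!D_t^{\nu,\alpha}$ (clear from \eqref{eq:MLCFdef}), the incidence terms $\beta SI/N$ and $(1-\tau)\beta VI/N$ cancel, as do $kS$, $\alpha_r I$ and $\mu R$. This leaves
\begin{equation*}
{}^{\rm MLCF}\!D_t^{\nu,\alpha} N(t) = \Delta - \delta N(t) - \delta_0 I(t) \le \Delta - \delta N(t),
\end{equation*}
where the inequality uses $I\ge0$ from Theorem~\ref{thm1}. The lower bound $N\ge0$ is immediate from Theorem~\ref{thm1}. For the upper bound, set $K=\max\{N(0),\Delta/\delta\}$ and $w(t)=K-N(t)$. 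Since the MLCF derivative of a constant vanishes (it only involves $u'$), we get
\begin{equation*}
{}^{\rm MLCF}\!D_t^{\nu,\alpha} w(t)\ge \delta K-\Delta+\delta w(t)\ge \delta w(t) \ge -\delta\,|w(t)|\cdot\mathbf 1_{\{w<0\}},
\end{equation*}
with $w(0)=K-N(0)\ge0$.

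Next I would pass to integral form through Lemma~\ref{lem:integrated} applied to $w$:
\begin{equation*}
w(t)=\frac{1-\nu}{M(\nu)}\frac{{}^{\rm MLCF}\!D_t^{\nu,\alpha}w(t)}{A(t)B(t)}+\frac{w(0)}{B(t)}+\frac{1}{B(t)}\int_0^tB'(\kappa)w(\kappa)\,d\kappa .
\end{equation*}
Substituting ${}^{\rm MLCF}\!D w\ge \delta w$ and using $A,B>0$ and $B'\ge0$ from \eqref{eq:positivity} gives
\begin{equation*}
\Bigl(1-\tfrac{(1-\nu)\delta}{M(\nu)A(t)B(t)}\Bigr)w(t)\ge \frac{w(0)}{B(t)}+\frac{1}{B(t)}\int_0^tB'(\kappa)w(\kappa)\,d\kappa .
\end{equation*}
This is the step I expect to be the main obstacle. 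The coefficient in front of $w(t)$ need not be positive, so dividing by it is not automatic. The cleanest fix is not to absorb the term at all. Instead, write the inequality for $w$ directly as ${}^{\rm MLCF}\!D w\ge -c\,w^-$ with $w^-=\max\{-w,0\}$, and drop the nonnegative part $\delta w^+$. The integrated form then reads $w(t)\ge w(0)/B(t) - c\,(\text{kernel terms})\,w^-$, and I would arrange it into the shape $\phi(t)\ge\phi_0-c\int_0^t\psi\,\phi\,d\kappa$ of Lemma~\ref{lem:gronwall}, with $\phi=w$, $\phi_0=w(0)\ge0$ and $\psi$ built from $B'/B\ge0$.

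Alternatively, I would mirror exactly whatever was done in the appendix proof of Theorem~\ref{thm1} for a single compartment, whose equation has the same form ${}^{\rm MLCF}\!Du\ge -c\,u$. Treating $w$ like a compartment gives $w\ge0$, i.e. $N(t)\le K$, by the same Gronwall argument. If a sharper, non-circular route is wanted, a bootstrap works: let $t^*$ be the first time $w$ becomes negative. On $[0,t^*]$ we have $w\ge0$, and the integrated inequality with nonnegative data and kernels forces $w$ to remain nonnegative slightly beyond $t^*$, a contradiction.

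Finally, since $S,V,I,R\ge0$ by Theorem~\ref{thm1} and their sum is at most $K$, each compartment lies in $[0,K]$. This gives the uniform boundedness.
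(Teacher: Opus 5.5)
Your strategy of applying the integrated form to $w=K-N$ can be made to work, and it differs from the paper's. As written, however, it hinges on a sign error. From $N=K-w$ you get $\delta N=\delta K-\delta w$, so
\[
{}^{\rm MLCF}\!D_t^{\nu,\alpha} w=-{}^{\rm MLCF}\!D_t^{\nu,\alpha} N\ge \delta N-\Delta=(\delta K-\Delta)-\delta w\ge-\delta w ,
\]
not $\ge\delta K-\Delta+\delta w\ge\delta w$. The inequality you display does not follow from ${}^{\rm MLCF}\!D_t^{\nu,\alpha}N\le\Delta-\delta N$, and everything after it inherits the slip:
\begin{itemize}
\item The ``main obstacle'' you identify, a coefficient $1-\frac{(1-\nu)\delta}{M(\nu)A(t)B(t)}$ of indefinite sign, is an artifact of the error.
\item Your ``cleanest fix'' (keeping only $-\delta w^-$) is derived from the incorrect inequality; the correct one gives ${}^{\rm MLCF}\!D_t^{\nu,\alpha}w\ge-\delta w^+$ instead.
\item Your later remark that $w$ satisfies the compartment-type inequality ${}^{\rm MLCF}\!D_t^{\nu,\alpha}u\ge-c\,u$ contradicts your own display, though it becomes true after the correction.
\end{itemize}

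With the correct sign, Lemma~\ref{lem:integrated} gives
\[
\gamma(t)\,w(t)\ge\frac{w(0)}{B(t)}+\frac{1}{B(t)}\int_0^tB'(\kappa)w(\kappa)\,d\kappa,\qquad \gamma(t)=1+\frac{(1-\nu)\delta}{M(\nu)A(t)B(t)}\ge1,
\]
so there is nothing to absorb. To close, do not route through Lemma~\ref{lem:gronwall}, or through the appendix argument for Theorem~\ref{thm1} that invokes it. Here the memory integral enters with a \emph{plus} sign and the datum appears as $w(0)/B(t)$. Moreover, the minus-sign form of that lemma is not valid in general. With $c=1$, $\psi\equiv1$ and $\phi(t)=e^{-t}(1+2t-t^2)$ one has $\phi(t)+\int_0^t\phi(\kappa)\,d\kappa=1+2te^{-t}\ge1=\phi(0)$, yet $\phi(3)<0$.

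Instead, make your bootstrap precise. Fix $T>0$ and let $m=\min\{0,\min_{[0,T]}w\}$. Since $w(0)\ge0$, $B'\ge0$, $B(0)=1$ and $B$ is nondecreasing, the display gives
\[
w(t)\ge m\,\frac{B(t)-1}{\gamma(t)B(t)}\ge m\Bigl(1-\frac{1}{B(T)}\Bigr),\qquad t\in[0,T],
\]
hence $m\ge m\bigl(1-1/B(T)\bigr)$, i.e.\ $m\ge0$. Thus $N\le K$ on every $[0,T]$, and your final step (each compartment lies in $[0,K]$) goes through.

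The paper argues differently. It compares $N$ with the solution $\overline N$ of ${}^{\rm MLCF}\!D_t^{\nu,\alpha}\overline N=\Delta-\delta\overline N$, $\overline N(0)=N(0)$, using a comparison principle asserted from the positivity of the kernel. It then bounds $\overline N=\Delta/\delta+(N(0)-\Delta/\delta)\Phi(t)$, with $0\le\Phi\le1$, through a Laplace-transform solution that is never written out. Your corrected argument needs neither ingredient, only $A,B>0$ and $B'\ge0$, so it is more self-contained. In fact, the same minimum argument applied to $\overline N-N$ is exactly what would justify the paper's comparison step, since ${}^{\rm MLCF}\!D_t^{\nu,\alpha}(\overline N-N)\ge-\delta(\overline N-N)$ with zero initial value.
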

The proof is given in the Appendix.

\subsection{Local Stability Analysis}
We study the local stability of the equilibrium points of the fractional SVIR system~\eqref{eq:SVIR}
with the total population $N=S+V+I+R$. 
All parameters are positive constants.  
The MLCF derivative is a non‑singular fractional operator with a completely positive kernel;
for such operators the stability of an equilibrium is determined by the eigenvalues of the Jacobian matrix of the right‑hand side $J$ evaluated at the equilibrium \cite{StabilityFractional}. 
Specifically, an equilibrium $\mathbf{x}^*$ is \textit{locally asymptotically stable}, if every eigenvalue $\lambda$ of $J(\mathbf{x}^*)$ satisfies \cite{Matignon1996}
 $|\arg(\lambda)| > \frac{\nu\pi}{2},$ 
and unstable if there exists an eigenvalue with $|\arg(\lambda)| < \frac{\nu\pi}{2}$.

At the equilibrium, all derivatives vanish. Adding the four equations \eqref{eq:S}--\eqref{eq:R} gives:
\begin{equation*}
    \Delta = \delta N + \delta_0 I.
\end{equation*}
First, we focus on the \textit{disease‑free equilibrium} (DFE).
Setting $I=0$ and $R=0$ yields
\begin{equation*}
   S_0 = \frac{\Delta}{k+\delta},\qquad 
   V_0 = \frac{k S_0}{\delta}= \frac{k\Delta}{\delta(k+\delta)},\qquad 
   N_0 = S_0+V_0 = \frac{\Delta}{\delta}.
\end{equation*}
Thus the DFE is $E_0=(S_0,V_0,0,0)$.  

The \textit{basic reproduction number} $\mathcal{R}_0$ is obtained via the next‑generation method \cite{Van}.
The infection rate from $S$ and $V$ is $\beta\frac{SI}{N}+(1-\tau)\beta\frac{VI}{N}$, and the removal rate from $I$ is $\alpha_r+\delta+\delta_0$.
At the DFE, we have
\begin{equation} \label{bas}
\mathcal{R}_0 = \frac{\beta}{\alpha_r+\delta+\delta_0}\,\frac{S_0+(1-\tau)V_0}{N_0}
= \frac{\beta}{\alpha_r+\delta+\delta_0}\Bigl( \frac{1}{k+\delta} + \frac{(1-\tau)k}{\delta(k+\delta)} \Bigr).
\end{equation}

Next, we turn to consider the \textit{endemic equilibrium} (EE).
When $\mathcal{R}_0>1$, a unique positive equilibrium $E^*=(S^*,V^*,I^*,R^*)$ exists. 
From the equilibrium conditions we obtain
\begin{align*}
   \Delta &= \beta\frac{S^*I^*}{N^*} - \mu R^* + (k+\delta)S^*, \\[2pt]
   k S^* &= (1-\tau)\beta\frac{V^*I^*}{N^*} + \delta V^*, \\[2pt]
    \beta\frac{S^*+(1-\tau)V^*}{N^*} &= \alpha_r+\delta+\delta_0, \\[2pt]
    R^* &= \frac{\alpha_r}{\delta+\mu} I^*.
\end{align*}
The total population satisfies $\Delta = \delta N^* + \delta_0 I^*$. Explicit formulas are not needed for the stability analysis.

\begin{thm}[Stability of the DFE]\label{thm3}
The disease‑free equilibrium $E_0$ is locally asymptotically stable if $\mathcal{R}_0<1$ and unstable if $\mathcal{R}_0>1$.
\end{thm}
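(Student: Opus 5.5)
The plan is to use the eigenvalue criterion stated just before the theorem: an equilibrium is locally asymptotically stable if every eigenvalue $\lambda$ of $J(\mathbf{x}^*)$ satisfies $|\arg(\lambda)|>\nu\pi/2$, and unstable if some eigenvalue satisfies $|\arg(\lambda)|<\nu\pi/2$. It therefore suffices to compute the spectrum of the Jacobian of the right-hand side of \eqref{eq:SVIR} at $E_0=(S_0,V_0,0,0)$.

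\textbf{Step 1: the Jacobian.} Every term containing $I/N$ vanishes at $I=0$. Hence the only nonzero derivatives of the incidence terms are those with respect to $I$, which equal $\beta S_0/N_0$ and $(1-\tau)\beta V_0/N_0$; the derivatives of $N$ itself play no role. Ordering the variables as $(S,V,I,R)$, one obtains
\begin{equation*}
J(E_0)=\begin{pmatrix}
-(k+\delta) & 0 & -\beta\frac{S_0}{N_0} & \mu\\
k & -\delta & -(1-\tau)\beta\frac{V_0}{N_0} & 0\\
0 & 0 & \beta\frac{S_0+(1-\tau)V_0}{N_0}-(\alpha_r+\delta+\delta_0) & 0\\
0 & 0 & \alpha_r & -(\delta+\mu)
\end{pmatrix}.
\end{equation*}

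\textbf{Step 2: the eigenvalues.} The third row has a single nonzero entry, on the diagonal. Expanding the determinant along it gives the eigenvalue
\begin{equation*}
\lambda_3=(\alpha_r+\delta+\delta_0)(\mathcal{R}_0-1),
\end{equation*}
where I use \eqref{bas}. Deleting the third row and column leaves a $3\times3$ matrix. Its first column is $(-(k+\delta),k,0)^T$, so it has $-(\delta+\mu)$ as an eigenvalue, and the remaining $2\times2$ block is lower triangular. The other eigenvalues are therefore $\lambda_1=-(k+\delta)$, $\lambda_2=-\delta$ and $\lambda_4=-(\delta+\mu)$. All four eigenvalues are real.

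\textbf{Step 3: applying the criterion.} Since $\lambda_1,\lambda_2,\lambda_4<0$, each has $|\arg\lambda|=\pi>\nu\pi/2$.
\begin{itemize}
\item If $\mathcal{R}_0<1$, then $\lambda_3<0$, so all eigenvalues satisfy the stability criterion and $E_0$ is locally asymptotically stable.
\item If $\mathcal{R}_0>1$, then $\lambda_3>0$ has $\arg\lambda_3=0<\nu\pi/2$, so $E_0$ is unstable.
\end{itemize}

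The only step needing care is Step 1. One must check that the derivatives of $SI/N$ and $VI/N$ with respect to $S$, $V$ and $R$ vanish at $I=0$, which holds because each such derivative carries a factor $I$. One must also confirm that the $(3,3)$ entry reduces exactly to $(\alpha_r+\delta+\delta_0)(\mathcal{R}_0-1)$ via \eqref{bas}. The remainder is a block-triangular eigenvalue computation, and I do not expect any real obstacle beyond relying on the cited linearization principle for the MLCF operator.
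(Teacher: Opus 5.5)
Your proposal is correct and follows the paper's proof essentially verbatim: the same Jacobian $J(E_0)$, the same four real eigenvalues $-(k+\delta)$, $-\delta$, $-(\delta+\mu)$ and $(\alpha_r+\delta+\delta_0)(\mathcal{R}_0-1)$ read off from the (block-)triangular structure, and the same $|\arg\lambda|$ versus $\nu\pi/2$ criterion. One small wording slip: in the reduced $3\times 3$ matrix, the eigenvalue $-(\delta+\mu)$ splits off because the third row is $(0,0,-(\delta+\mu))$, not because of the first column.
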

The proof is given in the Appendix.

Assume $\mathcal{R}_0>1$ so that a unique positive endemic equilibrium $E^*$ exists. 
To analyze its stability we compute the Jacobian at $E^*$ and examine its eigenvalues.
For convenience, we define
\begin{equation*}
  u = \frac{S^*}{N^*},\quad v = \frac{V^*}{N^*},
  \quad i = \frac{I^*}{N^*},\quad r = \frac{R^*}{N^*}.
\end{equation*}
Then $u+v+i+r=1$. From the equilibrium equations we obtain
\begin{equation*}
   p := \beta\frac{I^*}{N^*} = \beta i,\qquad q := \beta\frac{S^*}{N^*} = \beta u,\qquad r := \beta\frac{V^*}{N^*} = \beta v,
\end{equation*}
and the relation $q + (1-\tau)r = \alpha_r+\delta+\delta_0$.

\begin{thm}[Stability of the EE]\label{thm4}
If $\mathcal{R}_0>1$, the endemic equilibrium $E^*$ exists and is locally asymptotically stable.
\end{thm}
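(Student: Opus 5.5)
The plan is to apply the Matignon-type criterion recalled before Theorem~\ref{thm3}. If every eigenvalue of the Jacobian $J(E^*)$ of the right-hand side of \eqref{eq:SVIR} has negative real part, then $|\arg\lambda|>\pi/2>\nu\pi/2$, and local asymptotic stability follows. So it is enough to show that $J(E^*)$ is Hurwitz.

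First, existence of $E^*$. Since $\Delta=\delta N^*+\delta_0 I^*$, $R^*=\frac{\alpha_r}{\delta+\mu}I^*$, and the $V$-equation gives $V^*=kS^*/((1-\tau)p+\delta)$ with $p=\beta I^*/N^*$, I would express everything in terms of the force of infection $p$. Substituting into the $I$-equation $q+(1-\tau)\beta v=\alpha_r+\delta+\delta_0$ gives a scalar equation $F(p)=\alpha_r+\delta+\delta_0$. At $p=0$ one has $F(0)=(\alpha_r+\delta+\delta_0)\mathcal{R}_0$, using \eqref{bas}. The function $F$ is decreasing, since a larger force of infection lowers the susceptible and vaccinated fractions. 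Hence there is a unique positive root precisely when $\mathcal{R}_0>1$. Positivity of the remaining components then follows from the formulas above.

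Second, the Jacobian. Because $N=S+V+I+R$, the partial derivatives of $SI/N$ are
\[
\partial_S(SI/N)=i(1-u),\qquad \partial_I(SI/N)=u(1-i),\qquad \partial_{V}(SI/N)=\partial_R(SI/N)=-ui,
\]
and analogously for $VI/N$. This yields a full $4\times 4$ matrix in the quantities $p,q,\beta v,\tau$. The trick I would use is to change variables to $(S,V,I,N)$. The total-population equation ${}^{\rm MLCF}D N=\Delta-\delta N-\delta_0 I$ is linear and contributes the eigenvalue-type entry $-\delta$ in a block-triangular position. With the equilibrium relation $q+(1-\tau)\beta v=\alpha_r+\delta+\delta_0$, the $I$-row diagonal entry simplifies to $-q i-(1-\tau)\beta v i$, which is negative. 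I then compute the characteristic polynomial $\lambda^4+a_1\lambda^3+a_2\lambda^2+a_3\lambda+a_4$ (or a cubic after splitting off $-\delta$ if the reduction is block-triangular). The trace gives $a_1>0$ directly, since all diagonal entries are negative: $-(p+k+\delta)+\dots$.

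The main obstacle is verifying the Routh--Hurwitz conditions ($a_i>0$, $a_1a_2>a_3$, and $a_1a_2a_3>a_3^2+a_1^2a_4$). This is hard because the coupling entries $\mu$ (return from $R$ to $S$) and $\delta_0$ (disease mortality, entering through $N$) introduce terms of indefinite sign. I would first treat the case $\delta_0=0$, in which $N\equiv\Delta/\delta$ asymptotically and the system reduces to a three-dimensional mass-action SVIR-type system. There each coefficient can be written as a sum of products of positive quantities like $p$, $k$, $\delta$, $\delta+\mu$, and $(1-\tau)p+\delta$. In that case $a_1a_2-a_3$ should expand into manifestly positive monomials, as in the standard SVIR analysis of \cite{Kad}. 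For $\delta_0>0$ I would try to absorb the extra terms by a perturbation/continuity argument in $\delta_0$. If that fails, I would fall back on the Lyapunov function $\sum (x-x^*-x^*\ln(x/x^*))$ mentioned in the abstract: its MLCF derivative bound gives global, hence local, stability. Since the sign control of the $\delta_0$ terms is genuinely delicate, I expect that step, rather than the existence part, to need the most care and possibly an extra parameter restriction.
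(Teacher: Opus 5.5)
\textbf{There is a genuine gap.} Your framework is the same as the paper's: Matignon's criterion plus a Routh--Hurwitz check of $J(E^*)$. However, the step that carries the whole content of the theorem is never carried out, namely that $J(E^*)$ is Hurwitz for \emph{every} positive parameter set with $\mathcal{R}_0>1$. The remedies you list cannot deliver it.
\begin{itemize}
\item Hurwitz stability is an open condition. A continuity argument from $\delta_0=0$ therefore only gives stability for $\delta_0$ in some unquantified, parameter-dependent neighbourhood of $0$. It says nothing about general $\delta_0>0$.
\item The $(S,V,I,N)$ change of variables splits off $-\delta$ only when $\delta_0=0$. The $N$-row is $(0,0,-\delta_0,-\delta)$, while the $N$-column $(\mu+pu,\,(1-\tau)pv,\,-ai,\,-\delta)^\top$ has no zeros. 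Here $p=\beta I^*/N^*$, $u,v,i$ are the equilibrium fractions, and $a=\alpha_r+\delta+\delta_0$. So for $\delta_0>0$ you face a genuine quartic, and you have not examined its third Routh--Hurwitz condition.
\item The Lyapunov fallback is only announced, not carried out. Your remark about ``possibly an extra parameter restriction'' concedes that the statement as posed is not proved.
\end{itemize}
The paper's proof does not fill this hole either. It asserts that $J(E^*)$ is block-triangular with eigenvalue $-(\delta+\mu)$, which is false since $\partial f_4/\partial I=\alpha_r\neq0$. It then states $A_1A_2>A_3$ without computation, so it cannot serve as a template.

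\textbf{Two further corrections.}

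First, your $F$ is not decreasing. Eliminate $V^*/N^*=ku/((1-\tau)p+\delta)$ and $R^*/N^*=\alpha_r p/(\beta(\delta+\mu))$ using $u+v+i+R^*/N^*=1$. This gives $F(p)=\beta(1-cp)\bigl(1-\tau k/((1-\tau)p+\delta+k)\bigr)$ with $c=\beta^{-1}\bigl(1+\alpha_r/(\delta+\mu)\bigr)$.
\begin{itemize}
\item The second factor increases in $p$: infection shifts the $S$/$V$ mix toward the more susceptible $S$.
\item $F'(0)>0$ does occur, e.g.\ for $\tau=0.9$, $k=\alpha_r=1$, $\mu=10$, $\delta=0.01$ with $\beta$ near threshold. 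This produces a backward bifurcation, so ``precisely when $\mathcal{R}_0>1$'' is false.
\item What is true is that $F$ is strictly concave on $[0,1/c]$, being a decreasing linear factor times an increasing concave one. Hence for $\mathcal{R}_0>1$ the root $p^*\in(0,1/c)$ is unique and $F'(p^*)<0$.
\end{itemize}

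Second, you need exactly this fact even for $\delta_0=0$, contrary to your expectation of manifestly positive coefficients. The constant term of the $(S,V,I)$-block cubic is $A_3=p^*\bigl[(a+\mu)\bigl(m+(1-\tau)k\bigr)-\tau(\delta+\mu)(1-\tau)\beta V^*/N^*\bigr]$, with $m=(1-\tau)p^*+\delta$. This is not a sum of positive monomials, and one checks that $A_3>0$ is equivalent to $F'(p^*)<0$. With that in hand, $A_1>0$ and $A_1A_2>A_3$ are routine, so the case $\delta_0=0$ closes. The theorem as stated, with arbitrary $\delta_0>0$, remains unproved in your proposal.
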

The proof is given in the Appendix.

We have shown that the DFE 
is locally asymptotically stable when $\mathcal{R}_0<1$ and becomes unstable when $\mathcal{R}_0>1$, at which point a stable endemic equilibrium emerges. 
This threshold behaviour is typical of epidemic models and demonstrates that fractional‑order memory does not change the stability conditions; it only affects the rate of convergence through the fractional exponents $\nu$ and $\alpha$.

\subsection{Global Stability Analysis of the Fractional SVIR Model }
The MLCF derivative can be written as an integral with a positive kernel. Consequently, it satisfies a comparison principle: 
if ${}^{\rm MLCF}\!D_t^{\nu,\alpha} x(t) \le{}^{\rm MLCF}\!D_t^{\nu,\alpha} y(t)$ and $x(0)\le y(0)$, then $x(t)\le y(t)$ for all $t\ge0$. Moreover, the following inequality holds.

\begin{lem}[Chain‑rule Inequality]\label{lem:chain}
Let $0<\nu<1$, $0<\alpha\le1$ and let $x\colon[0,T]\to(0,\infty)$ be continuously differentiable. 
Define $\varphi(u)=u-1-\ln u\ge0$ for $u>0$. 
Then for any constant $c>0$,
\begin{equation*}
{}^{\rm MLCF}\!D_t^{\nu,\alpha} \Bigl[c\,\varphi\Bigl(\frac{x}{c}\Bigr)\Bigr] 
\le \Bigl(1-\frac{c}{x}\Bigr){}^{\rm MLCF}\!D_t^{\nu,\alpha}  x.
\end{equation*}
\end{lem}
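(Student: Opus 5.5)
Since the MLCF derivative is a positive multiple of an integral of the derivative against a positive weight, the plan is to reduce the claim to a pointwise inequality on the integrand and then integrate it against the positive weight. Write $K(t)=\frac{M(\nu)}{(1-\nu)B(t)}>0$, so that for any $H^1$ function $w$,
\begin{equation*}
{}^{\rm MLCF}\!D_t^{\nu,\alpha}w(t)=K(t)\int_0^t B(\kappa)\,w'(\kappa)\,d\kappa .
\end{equation*}
With $w(t)=c\,\varphi(x(t)/c)=x-c-c\ln(x/c)$ we get $w'(\kappa)=\bigl(1-\frac{c}{x(\kappa)}\bigr)x'(\kappa)$. Hence the difference we must show is nonpositive is
\begin{equation*}
\Phi(t):=K(t)\int_0^t B(\kappa)\Bigl[\Bigl(1-\frac{c}{x(\kappa)}\Bigr)-\Bigl(1-\frac{c}{x(t)}\Bigr)\Bigr]x'(\kappa)\,d\kappa
= c\,K(t)\int_0^t B(\kappa)\Bigl(\frac{1}{x(t)}-\frac{1}{x(\kappa)}\Bigr)x'(\kappa)\,d\kappa .
\end{equation*}

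Next we integrate by parts, in the style of Lemma~\ref{lem:integrated}. We use the auxiliary function $g(\kappa)=\frac{x(\kappa)}{x(t)}-1-\ln\frac{x(\kappa)}{x(t)}=\varphi\bigl(x(\kappa)/x(t)\bigr)\ge0$. It satisfies $g'(\kappa)=\bigl(\frac1{x(t)}-\frac1{x(\kappa)}\bigr)x'(\kappa)$ and $g(t)=0$. This gives
\begin{equation*}
\int_0^t B(\kappa)g'(\kappa)\,d\kappa=B(t)g(t)-B(0)g(0)-\int_0^t B'(\kappa)g(\kappa)\,d\kappa
=-g(0)-\int_0^t B'(\kappa)g(\kappa)\,d\kappa .
\end{equation*}
Every term on the right is nonpositive, for three reasons: $g\ge0$ because $\varphi\ge0$; $B'\ge0$ by \eqref{eq:positivity}; and $B(0)=1$. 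Multiplying by $cK(t)>0$ therefore gives $\Phi(t)\le0$, which is the claimed inequality.

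The step I expect to be the main obstacle is justifying the integration by parts and the differentiation rule $w'=(1-c/x)x'$. Both rely on $x$ staying strictly positive, with $\ln x$ and $1/x$ remaining $C^1$ on $[0,T]$; this follows from continuity of $x$ on the compact interval and the hypothesis $x>0$. A second point needs care. For $\alpha<1$, $B'(\kappa)$ behaves like $\kappa^{\alpha-1}$ near $0$, so it is integrable but not bounded. The boundary term at $0$ is harmless because $B$ itself is continuous with $B(0)=1$, and the integral $\int_0^t B'g$ converges since $g$ is bounded. I would note this explicitly and otherwise treat the computation as the routine Volterra-type argument.
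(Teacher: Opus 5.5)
Your proof is correct. The paper does not actually prove this lemma. It only says the argument ``uses the convexity of $\varphi$ and the positivity of the kernel'' and points to Lemma~\ref{lem:gronwall}, which is a Gronwall inequality and contains no such argument. Your integration by parts is the natural way to make that sketch rigorous, because it gives the exact identity
\[
{}^{\rm MLCF}\!D_t^{\nu,\alpha}\Bigl[c\,\varphi\Bigl(\frac{x}{c}\Bigr)\Bigr]-\Bigl(1-\frac{c}{x(t)}\Bigr){}^{\rm MLCF}\!D_t^{\nu,\alpha}x
=-\frac{c\,M(\nu)}{(1-\nu)B(t)}\Bigl[\varphi\Bigl(\frac{x(0)}{x(t)}\Bigr)+\int_0^t B'(\kappa)\,\varphi\Bigl(\frac{x(\kappa)}{x(t)}\Bigr)d\kappa\Bigr].
\]
Here each term $c\,\varphi(x(s)/x(t))$ is exactly the tangent-line (convexity) gap of $y\mapsto c\,\varphi(y/c)$ at $y=x(t)$. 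These gaps are weighted by a unit mass at $\kappa=0$ and by the density $B'(\kappa)$.

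The property you really use is $B'\ge0$ from \eqref{eq:positivity}, i.e.\ that the weight is nondecreasing in $\kappa$. Positivity of the kernel alone, as the paper's sketch suggests, would not suffice. A positive weight that is large on an early interval where $x$ moves away from $x(t)$, and small afterwards, makes the left-hand side minus the right-hand side positive.

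Finally, your argument needs only that the prefactor in front of the integral is positive. It is therefore unaffected by the paper's inconsistency between the factor $1/B(t)$ in \eqref{eq:MLCFdef} and the factor $A(t)$ used in the proof of Lemma~\ref{lem:integrated}.
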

\begin{proof}
The proof uses the convexity of $\varphi$ and the positivity of the kernel; it is given in Lemma~\ref{lem:gronwall}.
\end{proof}

\begin{lem}[Fractional LaSalle Principle \cite{rathee2025sensitivity}]\label{lem:lasalle}
Let $\Omega\subset\mathbb{R}^n$ be a compact positively invariant set for the system ${}^{\rm MLCF}\!D_t^{\nu,\alpha} x = f(x)$, where $f$ is continuous. 
Let $V\colon\Omega\to\mathbb{R}$ be continuously differentiable and satisfy ${}^{\rm MLCF}\!D_t^{\nu,\alpha} V(x) \le 0$ for all $x\in\Omega$. 
Then every solution starting in $\Omega$ converges to the largest invariant subset of $\{x\in\Omega\colon{}^{\rm MLCF}\!D_t^{\nu,\alpha} V(x)=0\}$.
\end{lem}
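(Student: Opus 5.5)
The plan is to reduce the statement to a monotonicity argument along a single trajectory, using the integrated form in Lemma~\ref{lem:integrated}. Fix a solution $x(t)\in\Omega$ and put $v(t)=V(x(t))$, which is bounded on $[0,\infty)$ because $\Omega$ is compact and positively invariant. Applying \eqref{eq:integrated} to $v$ and using ${}^{\rm MLCF}\!D_t^{\nu,\alpha}v\le0$ together with \eqref{eq:positivity} gives
\begin{equation*}
v(t)\le U(t):=\frac{1}{B(t)}\Bigl(v(0)+\int_0^t B'(\kappa)v(\kappa)\,d\kappa\Bigr).
\end{equation*}
Because $B(0)=1$, we have $1+\int_0^tB'=B(t)$. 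Hence $U(t)$ is a weighted average of past values of $v$, and so $U(t)\ge\min_\Omega V$.

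The key step is to show that $U$ is nonincreasing. Writing $W=B\,U$, we have $W'=B'v\le B'U=B'W/B$. It follows that $(W/B)'=(B/B^2)\bigl(W'-B'W/B\bigr)\le0$. Thus $U$ is nonincreasing and bounded below, so $U(t)\to L$. Moreover $U'=\frac{B'}{B}(v-U)$, and integrating gives
\begin{equation*}
\int_0^\infty\frac{B'(\kappa)}{B(\kappa)}\bigl(U(\kappa)-v(\kappa)\bigr)\,d\kappa=U(0)-L<\infty .
\end{equation*}
From the series for $B$ I expect $B'/B$ to be bounded below by a positive constant for large $t$; for $\alpha=1$ it equals $\nu/(1-\nu)$, and for $\alpha<1$ this follows from the exponential asymptotics of $E_\alpha$. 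Hence $\int^\infty(U-v)<\infty$ with $U-v\ge0$. Since $f$ is bounded on $\Omega$, the quantity $\dot x=f(x)\cdot{}$(kernel factor) is bounded, so $v$ and $U$ are uniformly continuous. Barbalat's lemma then yields $U(t)-v(t)\to0$. By the integrated form, $U-v=-\frac{1-\nu}{M(\nu)}\,{}^{\rm MLCF}\!D_t^{\nu,\alpha}v/(AB)$. This is the sense in which the MLCF derivative of $V$ vanishes asymptotically along the trajectory, and it also gives $v(t)\to L$.

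Finally, the $\omega$-limit set $\omega(x_0)$ is nonempty and compact because $\Omega$ is compact. On $\omega(x_0)$ the function $V\equiv L$ by continuity, so $\omega(x_0)$ lies in the set where the MLCF derivative of $V$ is zero. Once invariance is available, $\omega(x_0)$ is contained in the largest invariant subset, and $x(t)\to\omega(x_0)$ gives the conclusion.

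The main obstacle is this last step: proving invariance of $\omega(x_0)$ for a system with memory, where the flow is not a semigroup on $\mathbb{R}^n$. I would first try to argue via time-shifted trajectories $x(t_n+\cdot)$. These are equicontinuous by Arzel\`a--Ascoli, and I would show that the history term $\frac{1}{B}\int_0^{t_n}B'v$ decouples in the limit, using $v\to L$ and $U\to L$. The limit should then solve the same MLCF system with constant history, which identifies limit trajectories as solutions lying in $\{V=L\}$. If this fails, I would interpret the zero set through the Lie derivative $\nabla V\cdot f$, which is the form in which the principle is actually applied to the Lyapunov functionals built from Lemma~\ref{lem:chain}.
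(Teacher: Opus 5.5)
The paper gives no proof of Lemma~\ref{lem:lasalle}; it is only cited. Your argument therefore has to stand on its own, and it has two genuine gaps.

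\textbf{The first half is sound.} $U(t)=\frac{1}{B(t)}\bigl(v(0)+\int_0^tB'(\kappa)v(\kappa)\,d\kappa\bigr)$ is an average of past values, and $U'=\frac{B'}{B}(v-U)\le 0$. Since $E_\alpha(z)\sim\alpha^{-1}\exp(z^{1/\alpha})$, one indeed has $B'/B\to\rho:=\lambda^{1/\alpha}>0$ with $\lambda=\nu/(1-\nu)$, so $\int^\infty(U-v)\,dt<\infty$.

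\textbf{First gap: the Barbalat step.} It rests on a false formula, because $\dot x$ is not $f(x)$ times a kernel factor. Put $c=(1-\nu)/M(\nu)$. Then definition~\eqref{eq:MLCFdef} gives $x(t)=c\,f(x(t))+X(t)$, where the vector history $X(t)=\frac{1}{B(t)}\bigl(x(0)+\int_0^tB'(\kappa)x(\kappa)\,d\kappa\bigr)$ satisfies $X'=\frac{B'}{B}(x-X)$. So $X$ is Lipschitz on $[t_0,\infty)$, and $\dot x=\frac{B'}{B}\,c\,f(x)+c\,\frac{d}{dt}f(x(t))$. For a merely continuous $f$, as in the statement, $x$ is recovered from $X$ only by inverting $y\mapsto y-c\,f(y)$. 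That map need not have a uniformly continuous inverse, so uniform continuity of $v$ is not established. Barbalat's lemma cannot be invoked without an extra hypothesis, for instance $f\in C^1$ with $I-c\,Df$ uniformly invertible on $\Omega$. Note also that Lemma~\ref{lem:integrated} silently replaces $E_\alpha(\lambda t^\alpha)^{-1}$ by $A(t)=E_\alpha(-\lambda t^\alpha)$. With the factor $1/(A(t)B(t))$ that you used, $AB\to\infty$ for $\alpha<1$, so $U-v\to0$ would not even force the MLCF derivative of $v$ to vanish.

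\textbf{Second and decisive gap: invariance of $\omega(x_0)$.} This is the whole content of a LaSalle principle, and you leave it open; the route you sketch fails as stated.
\begin{itemize}
\item The operator has the fixed lower terminal $0$ (and, for $\alpha<1$, the non-convolution kernel $B(\kappa)/B(t)$). Hence time shifts of solutions are not solutions.
\item The history does not decouple. Along $x(t_n+s)$, the contribution of $[0,t_n]$ is $\frac{B(t_n)}{B(t_n+s)}X(t_n)\to\mathrm{e}^{-\rho s}Y_0$ along a subsequence, with $Y_0=y(0)-c\,f(y(0))$. This differs from $y(0)$ unless $y(0)$ is an equilibrium.
\item So a limit trajectory $y$ solves a Caputo--Fabrizio-type problem with rate $\rho$ and a nontrivial prehistory, not the MLCF system started at $y(0)$. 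The scalar facts $v\to L$ and $U\to L$ carry no information about this vector history.
\item Likewise, $V\equiv L$ on $\omega(x_0)$ does not place $\omega(x_0)$ in a set defined by the MLCF derivative. That derivative is a functional of the past, not a function of the point.
\end{itemize}

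\textbf{The missing idea.} This nonsingular kernel makes the memory finite-dimensional. The system is equivalent to $x-c\,f(x)=X$, $X'=\frac{B'(t)}{B(t)}\,c\,f(x)$, which is autonomous for $\alpha=1$ and asymptotically autonomous for $\alpha<1$. Under the invertibility hypothesis above, one of two standard results applies:
\begin{itemize}
\item for $\alpha=1$, invariance of $\omega$-limit sets of autonomous flows;
\item for $\alpha<1$, the Markus--Thieme theory of asymptotically autonomous systems.
\end{itemize}
Either way, $\omega(x_0)$ is invariant under the limiting flow $\frac{d}{dt}\bigl(x-c\,f(x)\bigr)=\rho\,c\,f(x)$, and together with $v\to L$ it lies in $\{V=L\}$. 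That is the precise form the conclusion can take.

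The limiting vector field is $\rho\,c\,(I-c\,Df)^{-1}f$, not $f$. Your fallback via $\nabla V\cdot f$ would therefore change the statement rather than prove it.

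Finally, the reduction gives $X(0)=x(0)$, hence $f(x(0))=0$, for every continuous solution. So even the phrase ``solution starting in $\Omega$'' needs a precise meaning before the lemma can be proved, e.g.\ an initial condition on $X$ that allows $x$ to jump at $0^+$.
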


We define
\begin{equation*}
    \Omega = \{(S,V,I,R)\in\mathbb{R}_{\ge0}^4\colon N\le M\},\qquad M = \max\{N(0),\Delta/\delta\}.
\end{equation*}
Using the total population equation
\begin{equation*}
    {}^{\rm MLCF}\!D_t^{\nu,\alpha} N = \Delta - \delta N - \delta_0 I \le \Delta - \delta N,
\end{equation*}
the comparison principle gives $N(t)\le M$ for all $t$. 
Non‑negativity of the variables follows from standard arguments for quasi‑positive systems. 
Hence $\Omega$ is compact and positively invariant.

Setting the right‑hand sides of the system~\eqref{eq:SVIR}
to zero yields the 
DFE 
\begin{equation*}
E_0 = (S_0,V_0,0,0),\quad S_0 = \frac{\Delta}{k+\delta},\quad V_0 = \frac{k\Delta}{\delta(k+\delta)},\quad N_0 = \frac{\Delta}{\delta}.
\end{equation*}
For the basic reproduction number \eqref{bas},
if $\mathcal{R}_0>1$, there exists a unique endemic equilibrium $E^*=(S^*,V^*,I^*,R^*)$ with all components positive, satisfying
\begin{subequations}\label{Equilibrium}
\begin{align}
\Delta &= \beta\frac{S^*I^*}{N^*} - \mu R^* + (k+\delta)S^*, \label{E1}\\
kS^* &= (1-\tau)\beta\frac{V^*I^*}{N^*} + \delta V^*, \label{E2}\\
\beta\frac{S^*I^*}{N^*} + (1-\tau)\beta\frac{V^*I^*}{N^*} &= (\alpha_r+\delta+\delta_0)I^*, \label{E3}\\
\alpha_r I^* &= (\delta+\mu)R^*. \label{E4}
\end{align}
\end{subequations}

\begin{thm}[Global stability of the DFE]\label{thm:DFE}
If $\mathcal{R}_0\le 1$, then $E_0$ is globally asymptotically stable in $\Omega$.
\end{thm}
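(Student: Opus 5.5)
The plan is a Lyapunov-plus-LaSalle argument on the compact, positively invariant set $\Omega$, followed by a limiting-system analysis of the largest invariant set. The difficulty is that $\mathcal{R}_0$ is defined through $S_0/N_0$ and $V_0/N_0$, whereas the incidence uses the time-varying $N(t)$. So I do not take $I$ alone as the Lyapunov function. Instead I combine the infection equation with the comparison principle for $S,V$.

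\textbf{Step 1: Lyapunov function.} Take $L(t)=I(t)\ge0$ (a linear function, so the MLCF derivative is linear in it). By \eqref{eq:I},
\begin{equation*}
{}^{\rm MLCF}\!D_t^{\nu,\alpha} L = (\alpha_r+\delta+\delta_0)\,I\Bigl(\frac{\beta}{\alpha_r+\delta+\delta_0}\,\frac{S+(1-\tau)V}{N}-1\Bigr).
\end{equation*}
The target inequality is $\frac{S+(1-\tau)V}{N}\le \frac{S_0+(1-\tau)V_0}{N_0}$ asymptotically on $\Omega$. With that in hand, ${}^{\rm MLCF}\!D_t^{\nu,\alpha}L\le(\alpha_r+\delta+\delta_0)(\mathcal{R}_0-1)I\le0$ whenever $\mathcal{R}_0\le1$.

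\textbf{Step 2: the key ratio estimate (the main obstacle).} The ratio $w=(S+(1-\tau)V)/N$ is not controlled pointwise. I would first try the following monotonicity argument. Vaccination moves mass from $S$ (weight $1$) to $V$ (weight $1-\tau$), and $R$ carries weight $0$. The weighted fraction $w$ is therefore maximised when $I=R=0$ and the $S{:}V$ split sits at the disease-free proportion $\delta:k$; this assumes that $\mu R$ feeds back only into $S$ at a lower rate than in the disease-free balance.

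If this pointwise bound fails, the fallback is a limsup argument. Via the comparison principle, $\limsup N\le\Delta/\delta$. Then $\limsup S\le S_0$, using ${}^{\rm MLCF}\!D_t^{\nu,\alpha}S\le \Delta+\mu R-(k+\delta)S$ with $R\to$ small. Next, $\limsup V\le kS_0/\delta=V_0$. I would also need a lower bound on $N$. Here I would use an iterative bootstrap: first show $\limsup I$ is at most a small constant, then refine. I expect this step to need the most care, and I would present it as showing ${}^{\rm MLCF}\!D_t^{\nu,\alpha}L\le0$ on the omega-limit set.

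\textbf{Step 3: LaSalle.} By Lemma~\ref{lem:lasalle}, trajectories converge to the largest invariant subset of $\{{}^{\rm MLCF}\!D_t^{\nu,\alpha}L=0\}$. For $\mathcal{R}_0<1$ this set is $\{I=0\}$. For $\mathcal{R}_0=1$, equality additionally forces $w=w_0$, which again forces $I=0$ unless the state is already the DFE.

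\textbf{Step 4: the limiting system.} On $\{I=0\}$, invariance gives ${}^{\rm MLCF}\!D_t^{\nu,\alpha}R=-(\delta+\mu)R$, so $R\to0$. The $S,V$ equations then become linear and asymptotically autonomous, with the unique equilibrium $(S_0,V_0)$ and Hurwitz matrix $\begin{pmatrix}-(k+\delta)&0\\k&-\delta\end{pmatrix}$; the comparison principle then gives $S\to S_0$ and $V\to V_0$. So the largest invariant set is $\{E_0\}$. Combined with the local stability from Theorem~\ref{thm3} (and a direct check at $\mathcal{R}_0=1$ via $L$), this yields global asymptotic stability of $E_0$ in $\Omega$.
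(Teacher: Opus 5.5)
Your Step~2 is a genuine gap, and it cannot be closed as the statement stands.

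\textbf{Why Step 2 fails.} The pointwise claim is false. On $\Omega$ you can take $V=R=0$ and $I$ small, so $w=(S+(1-\tau)V)/N$ is arbitrarily close to $1$. But $w_0=(S_0+(1-\tau)V_0)/N_0=\frac{\delta+(1-\tau)k}{\delta+k}<1$. Hence ${}^{\rm MLCF}\!D_t^{\nu,\alpha}I=I(\beta w-a)$, with $a=\alpha_r+\delta+\delta_0$, is positive at such points whenever $\beta>a$. That is compatible with $\mathcal{R}_0=\beta w_0/a<1$, so $L=I$ does not satisfy the hypothesis of Lemma~\ref{lem:lasalle} on $\Omega$.

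The limsup fallback is circular. From ${}^{\rm MLCF}\!D_t^{\nu,\alpha}S\le\Delta+\mu R-(k+\delta)S$ you only get $\limsup S\le(\Delta+\mu\limsup R)/(k+\delta)$, which needs $R\to0$, i.e.\ already $I\to0$. The natural lower bound $\liminf N\ge\Delta/(\delta+\delta_0)$ is also too weak to yield $\limsup w\le w_0$.

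\textbf{Endemic equilibria with $\mathcal{R}_0<1$.} More decisively, at any endemic equilibrium the $I$-equation forces $\beta w^*=a$, i.e.\ $w^*=w_0/\mathcal{R}_0>w_0$ when $\mathcal{R}_0<1$. Such equilibria exist for admissible parameters. Set $c=1+\alpha_r/(\delta+\mu)$ and $\gamma=(1-\tau)\beta$. The $V$- and $R$-equations give $V^*/S^*=k/(\gamma i+\delta)$ and $R^*=\alpha_r I^*/(\delta+\mu)$. Taking $N^*=\Delta/(\delta+\delta_0 i)$ from the total balance, the $S$-equation then holds automatically. So positive equilibria correspond to roots $i=I^*/N^*\in(0,1/c)$ of
\[
H(i):=\beta\,(1-ci)\,\frac{\gamma i+\delta+(1-\tau)k}{\gamma i+\delta+k}=a,\qquad H(0)=a\mathcal{R}_0,\quad H(1/c)=0 .
\]
$H$ need not be decreasing: higher prevalence drains $V$ and raises the fully susceptible share of the uninfected pool. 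This is the usual imperfect-vaccine backward bifurcation.

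For instance, take $\delta=0.001$, $k=0.05$, $\tau=0.9$, $\alpha_r=0.5$, $\delta_0=0.499$, $\mu=0.5$, $\beta=4.25$. Then $a=1$, $w_0=2/17$ and $\mathcal{R}_0=1/2$, yet $H(0.2)\approx1.71>1$. So there are two endemic equilibria in $\Omega$, one with $i\approx0.020$. Only rate ratios matter for equilibria, so multiplying all rates and $\Delta$ by $1000$ gives an equivalent example with $\delta=1$, for which both expressions in \eqref{bas} agree. The constant solution at either equilibrium never approaches $E_0$. Therefore no bootstrap can deliver your Step~2 without an extra hypothesis excluding this case. For example, with $\tau=0$ you have $w\le1=w_0$ trivially, and your $L=I$ argument goes through.

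\textbf{The paper's route.} The paper argues differently. It adds weighted Volterra terms $S_0\varphi(S/S_0)$ and $V_0\varphi(V/V_0)$, plus a multiple of $R$, to $I$ and applies Lemma~\ref{lem:chain}. It then asserts ${}^{\rm MLCF}\!D_t^{\nu,\alpha}L\le-a(1-\mathcal{R}_0)I-\cdots\le0$ on all of $\Omega$, which is exactly the global dissipation your $L=I$ lacks. But that inequality is only asserted (``after a straightforward algebraic simplification''), and the example above refutes it. Along the constant solution at an endemic equilibrium the left side is $0$, while the right side is at most $-a(1-\mathcal{R}_0)I^*<0$.

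The obstruction is the one you spotted: the incidence carries $1/N(t)$, and $(S+(1-\tau)V)/N$ can exceed its disease-free value. Neither argument proves the theorem without an added condition ruling out endemic equilibria for $\mathcal{R}_0\le1$.
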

The proof is given in the Appendix.

\begin{thm}[Global stability of the EE]\label{thm:EE}
If $\mathcal{R}_0>1$, then the endemic equilibrium $E^*$ is globally asymptotically stable in $\interior(\Omega)$.
\end{thm}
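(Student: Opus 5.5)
The plan is to build a logarithmic (Volterra-type) Lyapunov functional centred at $E^*$, combine Lemma~\ref{lem:chain} with the equilibrium relations \eqref{Equilibrium}, and close with the fractional LaSalle principle, Lemma~\ref{lem:lasalle}. On $\interior(\Omega)$ all components stay positive; this follows from Theorem~\ref{thm1} and the integrated representation of Lemma~\ref{lem:integrated}, since $I$ solves a linear equation in $I$ with a bounded coefficient. Hence $\varphi(x/x^*)$ is well defined along trajectories. The candidate functional is
\begin{equation*}
\mathcal{L}=a_1 S^*\varphi\Bigl(\frac{S}{S^*}\Bigr)+a_2V^*\varphi\Bigl(\frac{V}{V^*}\Bigr)+I^*\varphi\Bigl(\frac{I}{I^*}\Bigr)+a_4R^*\varphi\Bigl(\frac{R}{R^*}\Bigr),
\end{equation*}
with positive weights $a_1,a_2,a_4$ still to be chosen. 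By linearity of the operator and Lemma~\ref{lem:chain} applied term by term, ${}^{\rm MLCF}\!D_t^{\nu,\alpha}\mathcal{L}\le\sum_j a_j(1-x_j^*/x_j)\,f_j(x)$. The fractional problem thus reduces to the classical question of whether the integer-order Lyapunov derivative is nonpositive.

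Next I will substitute $\Delta$, $kS^*$, $\alpha_r+\delta+\delta_0$ and $\alpha_r$ from \eqref{E1}--\eqref{E4} into the right-hand sides. Writing $x=S/S^*$, $y=V/V^*$, $z=I/I^*$, $w=R/R^*$, the expression should split into two parts. The first consists of terms like $-\delta S^*(x-1)^2/x$ and $-\delta V^*(\cdots)$. The second consists of groups of the form $c\,(n-\sum\text{ratios})$ whose product of ratios equals one, so each group is $\le0$ by the AM--GM inequality. The natural choice is $a_1=a_2=1$, so that the incidence terms $\beta SI/N$ and $(1-\tau)\beta VI/N$ cancel against the $I$-equation. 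The weight $a_4$ is then tuned so that the linear $\mu R$ feedback into $S$ is absorbed against $\alpha_r I$, which leads to a cycle $S\to I\to R\to S$ with product of ratios $\frac{S^*}{S}\cdot\frac{I}{I^*}\cdot\frac{R}{R^*}\cdots$.

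The main obstacle is the standard incidence $\beta SI/N$ with $N$ variable. The ratio $N^*/N$ enters the incidence terms and spoils the clean cyclic structure, and the $\mu R$ recycling term also produces non-sign-definite cross terms. To deal with this, I will first try to factor out $N^*/N$ using $N\le M$ together with $\Delta=\delta N^*+\delta_0 I^*$, and to group the incidence terms as
\begin{equation*}
\beta\frac{S^*I^*}{N^*}\Bigl(3-\frac{S^*}{S}-\frac{SIN^*}{S^*I^*N}\cdot\frac{I^*}{I}\cdots\Bigr).
\end{equation*}
If the $\mu R$ term resists, I will fall back to the weaker target ${}^{\rm MLCF}\!D_t^{\nu,\alpha}\mathcal{L}\le0$ with a graph-theoretic (Li--Shuai) choice of the weights $a_j$. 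If that also fails, I will assume $\mu$ is small or $\delta_0=0$, so that $N\to\Delta/\delta$ and the dynamics reduce to a limiting system with constant $N$, where the AM--GM grouping is classical.

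Finally, with ${}^{\rm MLCF}\!D_t^{\nu,\alpha}\mathcal{L}\le0$ on the compact positively invariant set $\Omega$, Lemma~\ref{lem:lasalle} shows that trajectories converge to the largest invariant subset where equality holds. Equality in the squared terms forces $S=S^*$ and $V=V^*$, and equality in the AM--GM groups forces $I/I^*=R/R^*=S/S^*=1$. Invariance of that set under \eqref{eq:SVIR} then shows it is $\{E^*\}$. Combined with the local stability from Theorem~\ref{thm4}, this gives global asymptotic stability in $\interior(\Omega)$.
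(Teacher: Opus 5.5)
Your proposal has a genuine gap at its central step. You never establish $\sum_j a_j(1-x_j^*/x_j)f_j(x)\le 0$ on $\interior(\Omega)$, and with the weights you propose it does not follow from an AM--GM grouping.

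Take $a_1=a_2=1$ (weight $1$ on $I$), and set $a=\alpha_r+\delta+\delta_0$, $A=\beta S^*I^*/N^*$, $B=(1-\tau)\beta V^*I^*/N^*$. Use \eqref{E1}--\eqref{E3} in the form $\Delta=A-\mu R^*+(k+\delta)S^*$, $kS^*=B+\delta V^*$, $aN^*=\beta\bigl(S^*+(1-\tau)V^*\bigr)$, and split $(k+\delta)S^*=B+\delta V^*+\delta S^*$. The sign-definite pieces are then $-\delta(S-S^*)^2/S$ and $\delta V^*\bigl(3-\frac{S^*}{S}-\frac{SV^*}{S^*V}-\frac{V}{V^*}\bigr)\le 0$. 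What remains is
\[
aI\Bigl(\frac{N^*}{N}-1\Bigr)
+A\Bigl(2-\frac{S^*}{S}-\frac{SN^*}{S^*N}\Bigr)
+B\Bigl(3-\frac{S^*}{S}-\frac{SV^*}{S^*V}-\frac{VN^*}{V^*N}\Bigr)
+\mu(R-R^*)\Bigl(1-\frac{S^*}{S}\Bigr)
+a_4\Bigl(1-\frac{R^*}{R}\Bigr)\bigl(\alpha_rI-(\delta+\mu)R\bigr).
\]
These terms are not controlled:
\begin{itemize}
\item The first term is positive whenever $N<N^*$.
\item In the $A$- and $B$-groups the product of the ratios is $N^*/N$, not $1$, so AM--GM gives no sign; these groups can be positive when $N>N^*$.
\item Your $\mathcal{L}$ has no $N$-dependent component that could generate compensating terms.
\item No value of $a_4$ is shown to control the last two terms.
\end{itemize}
You correctly name both obstructions, but what follows is a list of contingencies rather than an argument. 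The Li--Shuai weights are only mentioned. The final fallback ($\mu$ small or $\delta_0=0$, reduction to a limiting system with constant $N$) proves a different theorem. It would also need a limiting-system theory for the MLCF operator that is not available here. Your LaSalle paragraph then presupposes a form of the bound (squares plus AM--GM groups) that was never derived.

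For comparison, the paper adds $N^*\varphi(N/N^*)$ to the functional. This contributes $(1-N^*/N)(\Delta-\delta N-\delta_0 I)=-\delta(N-N^*)^2/N-\delta_0(1-N^*/N)(I-I^*)$. It uses weights $p=A/(A+B)$, $q=B/(A+B)$ on $S,V$ and $\sigma=\mu/(\delta+\mu)$ on $R$. It then asserts, by reference to the integer-order literature, that all cross terms cancel.

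That computation is not displayed, and the claimed cancellation cannot be taken literally. Since $p+q=1$, the terms $\beta SI/N$ and $(1-\tau)\beta VI/N$ coming from the $S$-, $V$- and $I$-equations do not cancel; they survive with coefficients $q\beta$ and $p(1-\tau)\beta$. So your suspicion that the variable-$N$ incidence and the $\mu R$ return flow are the real difficulty is well founded.

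To prove the theorem as stated, you must still exhibit concrete weights and verify the resulting inequality on all of $\interior(\Omega)$, with equality only at $E^*$. An $N$-component is one option. Note that the integer-order device of passing to proportions $S/N,\dots$ is unavailable, because the MLCF derivative has no quotient rule.
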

The proof is given in the Appendix.


\section{Numerical Approximation of the MLCF Derivative}\label{S4}
The numerical approximation of the MLCF derivative at $t = t_n$ using the two point forward difference and trapezoidal rule is given by \cite{Sadek2025}:
\begin{equation*}
^{\rm MLCF}D_t^{\nu,\alpha}[u(t)] \big|_{t=t_n} 
\approx \frac{M(\nu)}{1-\nu} \, \Ealpha\Bigl( -\frac{\nu}{1-\nu}\, t_n^{\alpha} \Bigr) 
\sum_{k=1}^{n} \frac{u_k - u_{k-1}}{h} 
\int_{t_{k-1}}^{t_k} \Ealpha\Bigl( \frac{\nu}{1-\nu} \tau^{\alpha} \Bigr)\, d\tau.
\end{equation*}
Approximating the integral via the trapezoidal rule yields
\begin{equation*}
   \int_{t_{k-1}}^{t_k} \Ealpha\Bigl( \frac{\nu}{1-\nu} \tau^{\alpha} \Bigr) \,d\tau 
\approx \frac{h}{2} \biggl[ \Ealpha\Bigl( \frac{\nu}{1-\nu} t_k^{\alpha} \Bigr) 
+ \Ealpha\Bigl( \frac{\nu}{1-\nu} t_{k-1}^{\alpha} \Bigr) \biggr].
\end{equation*}
Substituting this into the derivative approximation gives the final discrete scheme:
\begin{align}
^{\rm MLCF}D_t^{\nu,\alpha}[u(t)] \big|_{t=t_n} &\approx \sum_{k=1}^{n} (u_k - u_{k-1}) W_{k,n},
\end{align}
with the weights
\begin{equation*}
W_{k,n} = \frac{M(\nu) \, h}{2(1-\nu)} \, \Ealpha\Bigl( -\frac{\nu}{1-\nu} \,t_n^{\alpha} \Bigr) 
\biggl[ \Ealpha\Bigl( \frac{\nu}{1-\nu} t_k^{\alpha} \Bigr) 
+ \Ealpha\Bigl( \frac{\nu}{1-\nu} t_{k-1}^{\alpha} \Bigr) \biggr].
\end{equation*}
In practice, we evaluate the Mittag‑Leffler function with a positive argument, defining
\begin{equation*}
    E_n = E_\alpha\Bigl(\frac{\nu}{1-\nu}\,t_n^\alpha\Bigr).
\end{equation*}
Then, the approximation can be rewritten as
\begin{equation}
   ^{\rm MLCF}D_t^{\nu,\alpha} u(t_n) \approx 
   \frac{1}{P_n}\sum_{k=1}^{n} (u_k-u_{k-1}) \,w_k,
   \label{pw1}
\end{equation}
where we have introduced
\begin{equation}
w_n = E_n + E_{n-1}, \qquad 
P_n = \frac{M(\nu)}{2(1-\nu)E_n}.
\end{equation}
Separating the term $k=n$ and defining the \textit{memory variable}
\begin{equation*}
     \mem_{n-1} = \sum_{k=1}^{n-1} (u_k-u_{k-1}) \,w_k,
\end{equation*}
we obtain the compact form
\begin{equation}\label{eq:mlcf_compact}
    ^{\rm MLCF}D_t^{\nu,\alpha} u(t_n) \approx \frac{1}{P_n}\bigl( u_n w_n - (u_{n-1}w_{n-1} + \mem_{n-1})\bigr).
\end{equation}
\begin{rem}
In the special case $\alpha=1$, the Mittag-Leffler function reduces to the exponential function, i.e., 
$E_1(z) = e^z $, and the scheme simplifies to the discretization of the Caputo-Fabrizio derivative:
\begin{equation*}
   ^{\rm CF}D_t^{\nu}[u(t)] \big|_{t=t_n} 
   \approx \frac{M(\nu) \, h}{2(1-\nu)} \, \mathrm{e}^{ -\frac{\nu}{1-\nu} t_n } 
    \sum_{k=1}^{n} (u_k - u_{k-1}) 
    \Bigl[ \mathrm{e}^{ \frac{\nu}{1-\nu} t_k } + \mathrm{e}^{ \frac{\nu}{1-\nu} t_{k-1} } \Bigr].
\end{equation*}
\end{rem}

\subsection{Nonstandard Finite Difference Discretisation}
The NSFD method replaces the standard denominator $h$ by a function $\phi(h)$ that satisfies $\phi(h)=h+O(h^2)$ and enhances the qualitative behaviour of the numerical solution. 
Following \cite{mickens}, we choose
\begin{equation*}
    \phi(h) = 1 - \mathrm{e}^{-h}.
\end{equation*}
For a fractional differential equation $^{\rm MLCF}D_t^{\nu,\alpha} u = f(u)$, we impose the discrete analogue
\begin{equation*}
    \frac{u_n - u_{n-1}}{\phi(h)} = \frac{1}{P_n w_n}\bigl( f(u_n) - P_n \mem_{n-1} \bigr),
\end{equation*}
which is obtained by substituting \eqref{eq:mlcf_compact} and solving for the derivative. Solving for $u_n$ yields an implicit update formula.

To allow flexibility in the time discretization, we approximate the right‑hand side $f(u)$ by a \textit{weighted average} of its values at the current and previous time steps 
\begin{equation*}
   f\bigl(u(t)\bigr) \approx \theta f(u_n) + (1-\theta) f(u_{n-1}), \quad \theta\in[0,1].
\end{equation*}
The case $\theta=1$ gives the fully implicit scheme used in earlier work; $\theta=0$ corresponds to an explicit scheme; 
$\theta=1/2$ is the Crank–Nicolson type, which is second‑order accurate for ordinary differential equations.

Incorporating this weighted average into the NSFD discretization leads to the following implicit relation:
\begin{equation}\label{disc1}
\frac{u_n - u_{n-1}}{\phi(h)} = \frac{1}{P_n w_n}\Bigl( \theta f(u_n) + (1-\theta) f(u_{n-1}) - P_n \mem_{n-1} \Bigr).
\end{equation}
Rearranging, we obtain the nonlinear system for $u_n$:
\begin{equation}\label{eq:theta_implicit}
u_n - u_{n-1} - \frac{\phi}{P_n w_n}\Bigl( \theta f(u_n) + (1-\theta) f(u_{n-1}) - P_n \mem_{n-1} \Bigr) = 0.
\end{equation}

We define the residual vector $\mathbf{r}(\mathbf{x})$ for the four state variables $\mathbf{x}=[S,V,I,R]^\top$ at time step $m$:
\begin{equation*}
   \mathbf{r}(\mathbf{x}) = \mathbf{x} - \mathbf{x}_{m-1} 
   - \frac{\phi}{P_m w_m}\Bigl( \theta \mathbf{f}(\mathbf{x}) + (1-\theta)\mathbf{f}(\mathbf{x}_{m-1}) - P_m \mathbf{mem}_{m-1} \Bigr),
\end{equation*}
where $\mathbf{f}=[f_1,f_2,f_3,f_4]^\top$ with
\begin{align*}
f_1(S,V,I,R) &= \Delta - \beta\frac{SI}{N} + \mu R - (k+\delta)S,\\
f_2(S,V,I,R) &= k S - (1-\tau)\beta\frac{VI}{N} - \delta V,\\
f_3(S,V,I,R) &= \beta\frac{SI}{N} + (1-\tau)\beta\frac{VI}{N} - (\alpha_r+\delta+\delta_0)I,\\
f_4(S,V,I,R) &= \alpha_r I - (\delta+\mu)R,
\end{align*}
and $N=S+V+I+R$ (with a small safeguard if $N$ becomes zero).
The memory term $\mathbf{mem}_{m-1}$ is a vector containing the four accumulated memory values from previous steps, defined for each variable as in \eqref{eq:mlcf_compact}.

To apply Newton’s method we need the Jacobian $\mathbf{J}_r(\mathbf{x}) = \partial \mathbf{r}/\partial \mathbf{x}$, from the definition,
\begin{equation*}
    \mathbf{J}_r(\mathbf{x}) = \mathbf{I} - \frac{\phi}{P_m w_m}\,\theta\,\mathbf{J}_f(\mathbf{x}),
\end{equation*}
where $\mathbf{J}_f(\mathbf{x}) = \partial \mathbf{f}/\partial \mathbf{x}$ is the Jacobian of $\mathbf{f}$. 
A maximum of 30 iterations is allowed; if convergence is not reached, the last iterate is accepted. In practice, for $\theta>0$ the Newton method usually converges in 3--5 iterations.
%
After the converged solution $\mathbf{x}_m = [S_m,V_m,I_m,R_m]^\top$ is obtained, the memory terms are updated for use in the next time step:
\begin{align*}
\mem_{S,m} &= \mem_{S,m-1} + (S_m - S_{m-1}) w_m, \\
\mem_{V,m} &= \mem_{V,m-1} + (V_m - V_{m-1}) w_m, \\
\mem_{I,m} &= \mem_{I,m-1} + (I_m - I_{m-1}) w_m, \\
\mem_{R,m} &= \mem_{R,m-1} + (R_m - R_{m-1}) w_m.
\end{align*}

\begin{figure}[htbp]
\includegraphics[width=1.0\textwidth]{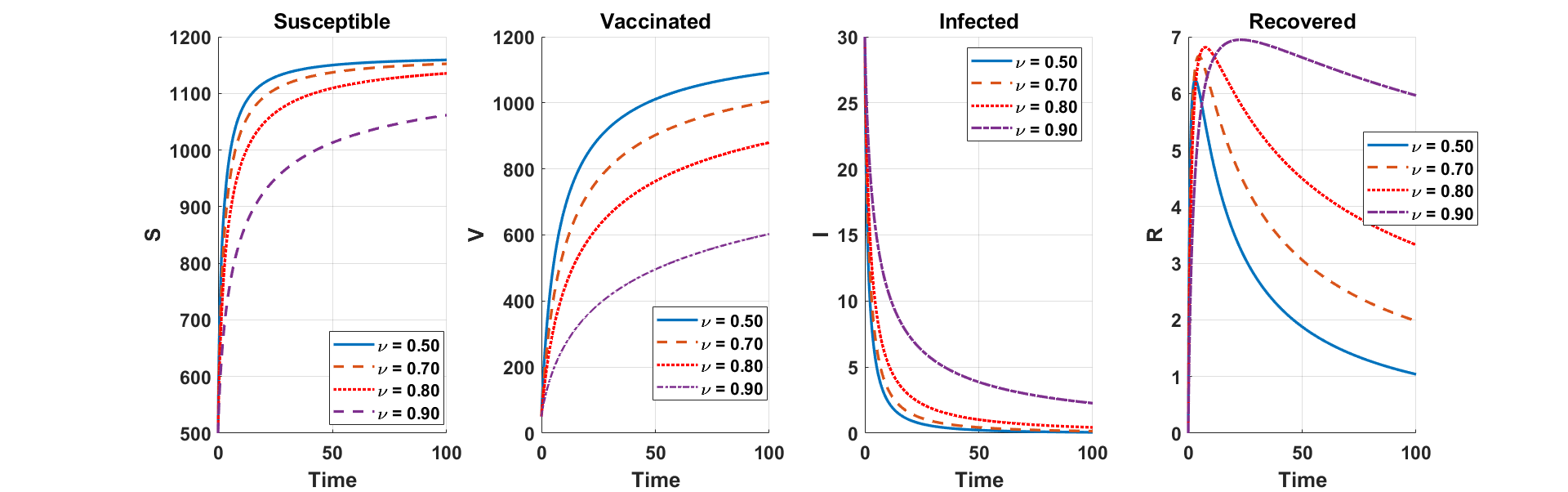}
	\caption{Numerical solution for $
    \alpha=0.8,$ $\theta=1$,  and different $\nu$.}
\label{ch1}
\end{figure}
\begin{figure}[htbp]
\centering
\includegraphics[width=1.0\textwidth]{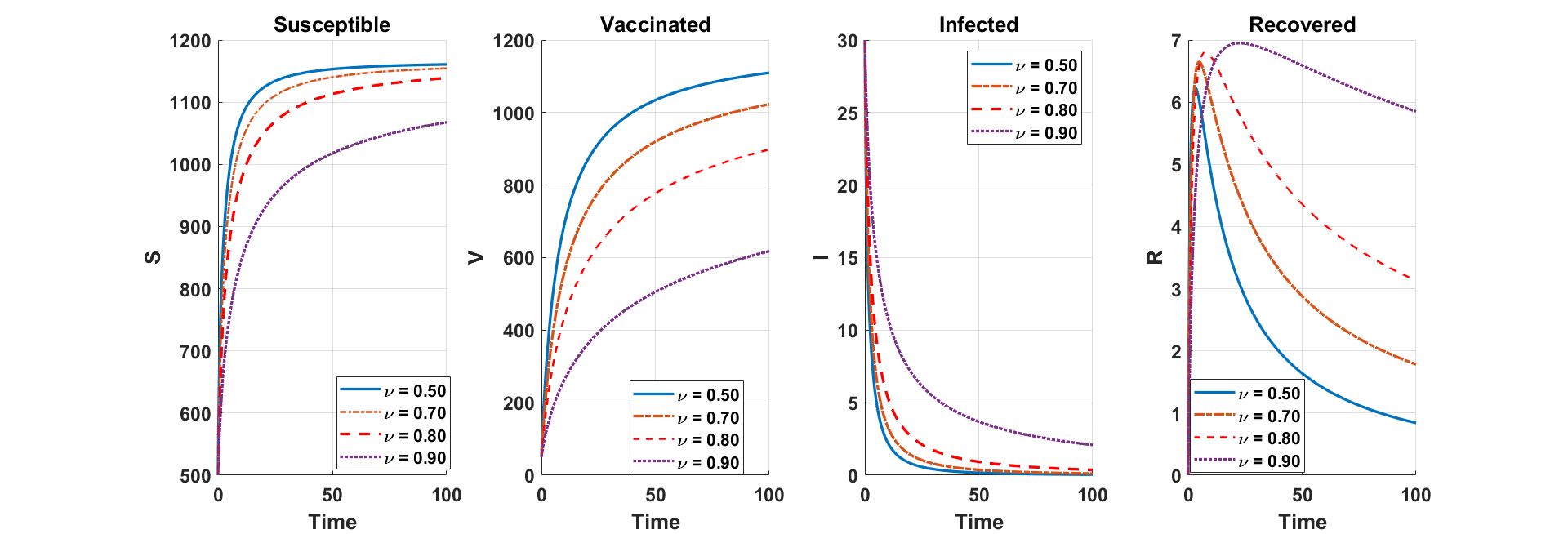}
	\caption{Numerical solution for $
    \alpha=1,$ $\theta=1$,  and different $\nu$. }
\label{ch2}
\end{figure}
\begin{figure}[htbp]
\centering
\includegraphics[width=1.0\textwidth]{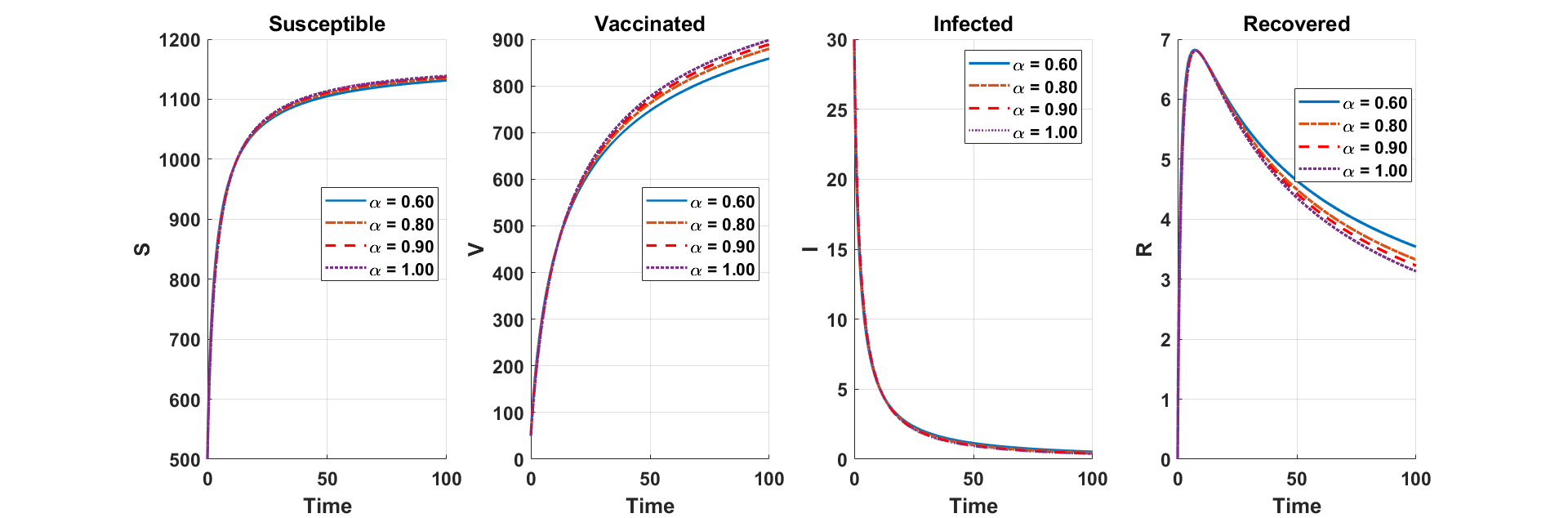}
	\caption{Numerical solution for $\nu=0.8$,  $\theta=1$,   and different $\alpha$.}
\label{ch3}
\end{figure}
\begin{figure}[htbp]
\includegraphics[width=1.0\textwidth]{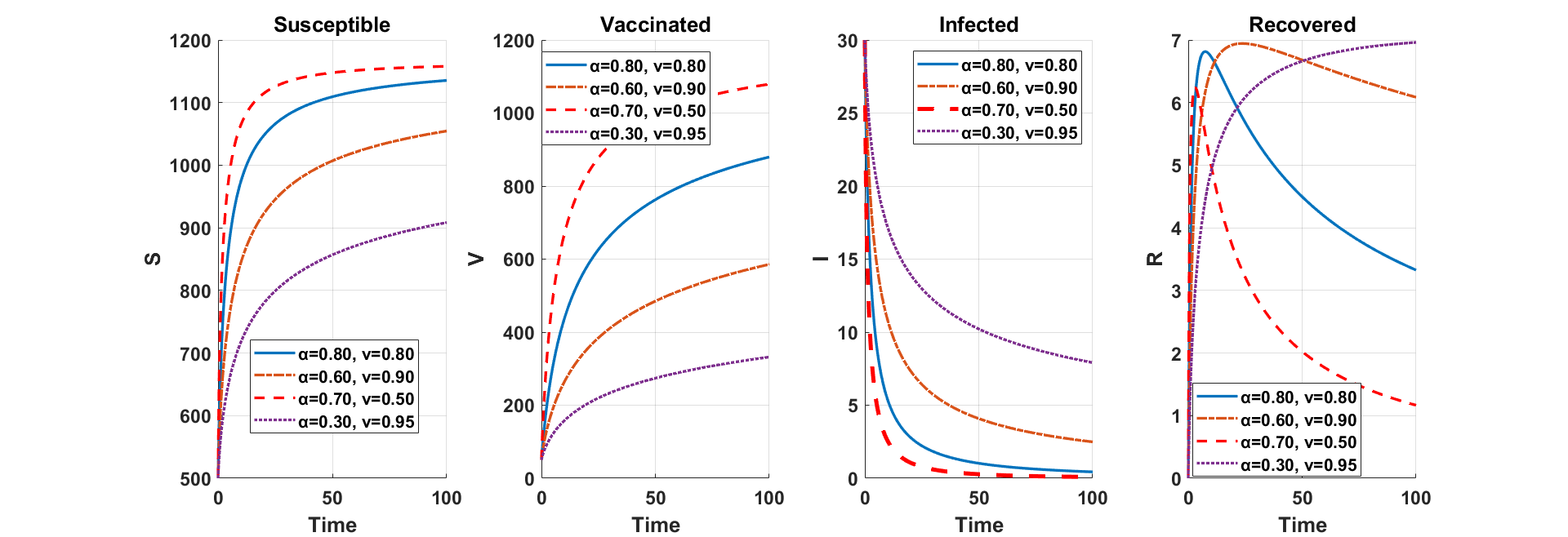}
	\caption{ Numerical solution for $\theta=1,$ and  different $\alpha, \nu$.}
\label{ch4}
\end{figure}

\begin{figure}[htbp]
\includegraphics[width=1.\textwidth]{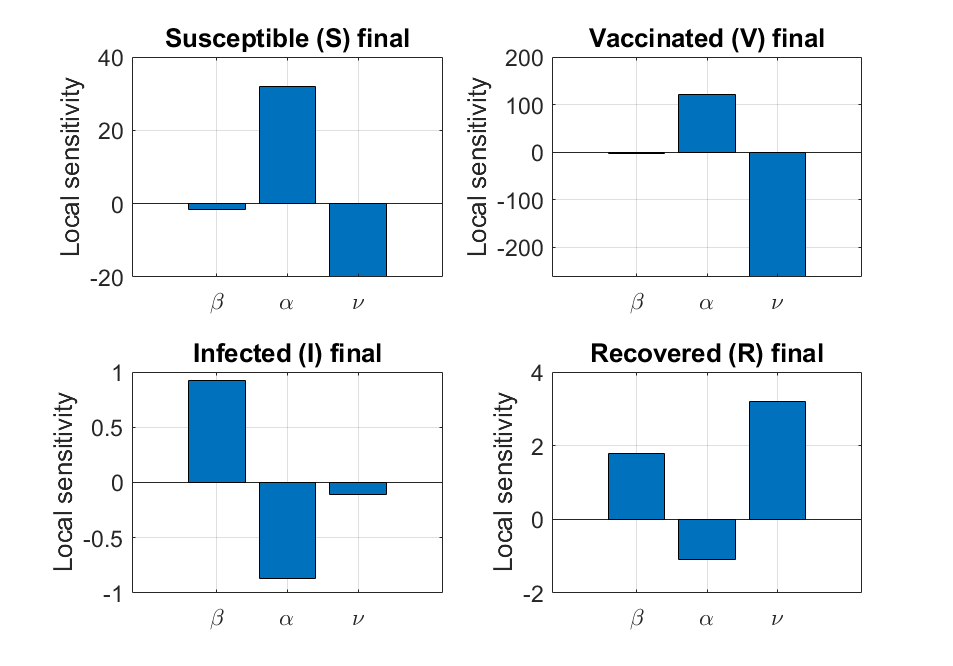}
	\caption{Local sensitivity of final compartment values ($1\% $ perturbation). }
\label{ch5}
\end{figure}
\begin{figure}[htbp]
\includegraphics[width=1.\textwidth]{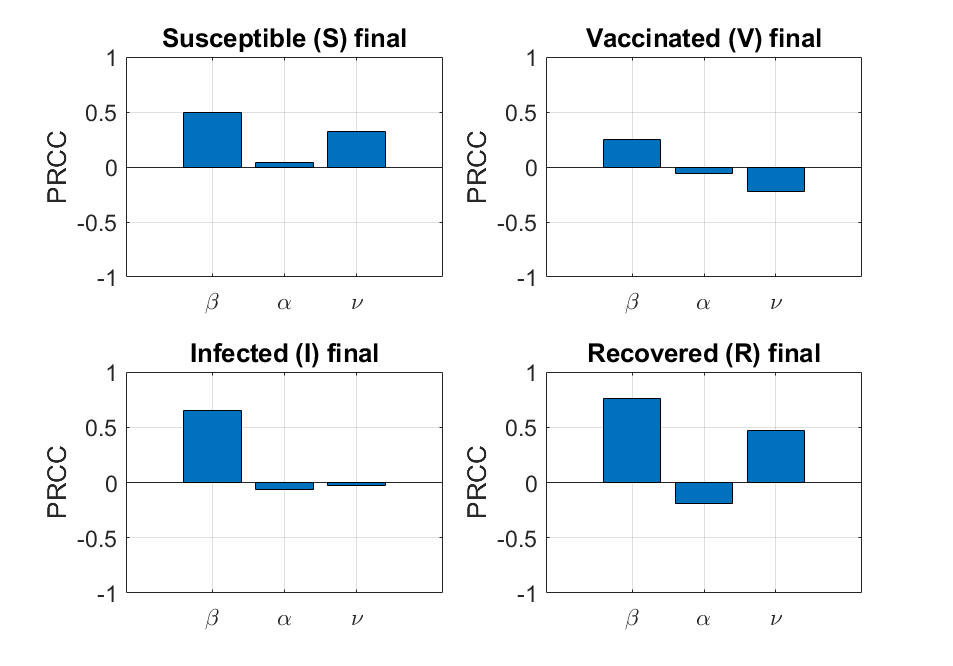}
	\caption{Global sensitivity (PRCC) of final compartment values. }
\label{ch6}
\end{figure}
\begin{figure}[htbp]
\includegraphics[width=0.5\textwidth]{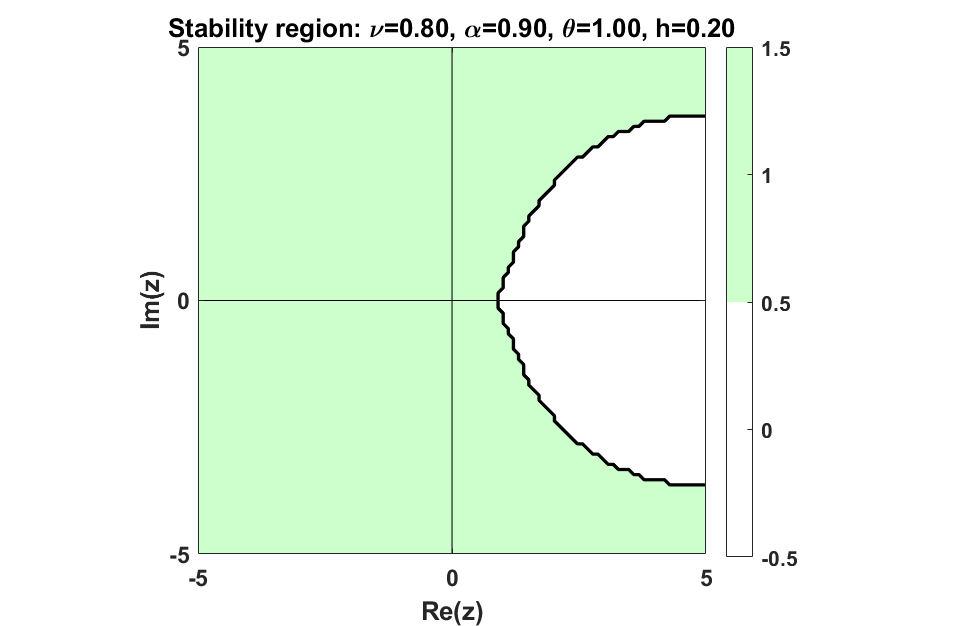}
\includegraphics[width=0.5\textwidth]{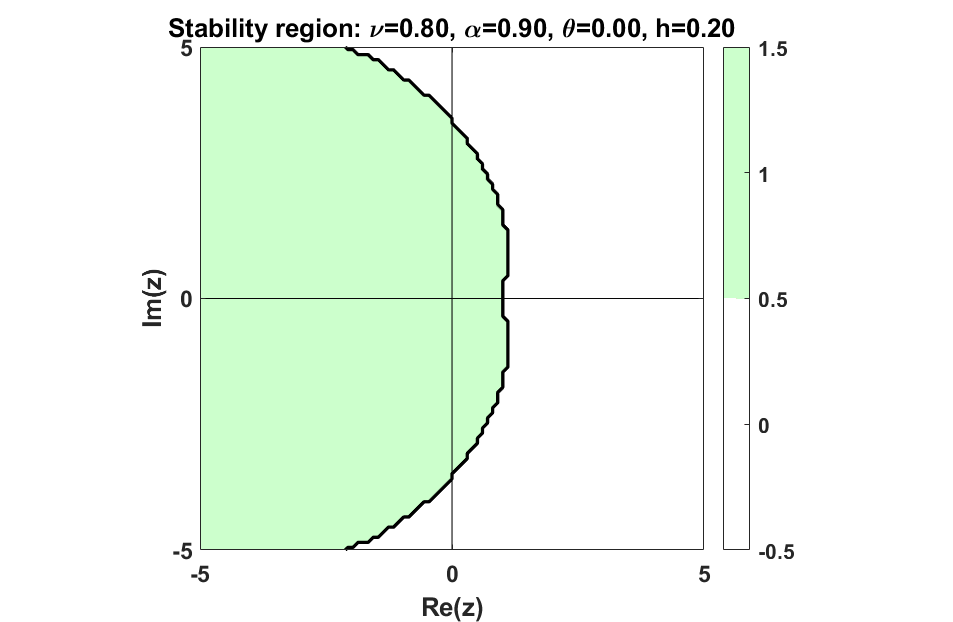}
\includegraphics[width=0.5\textwidth]{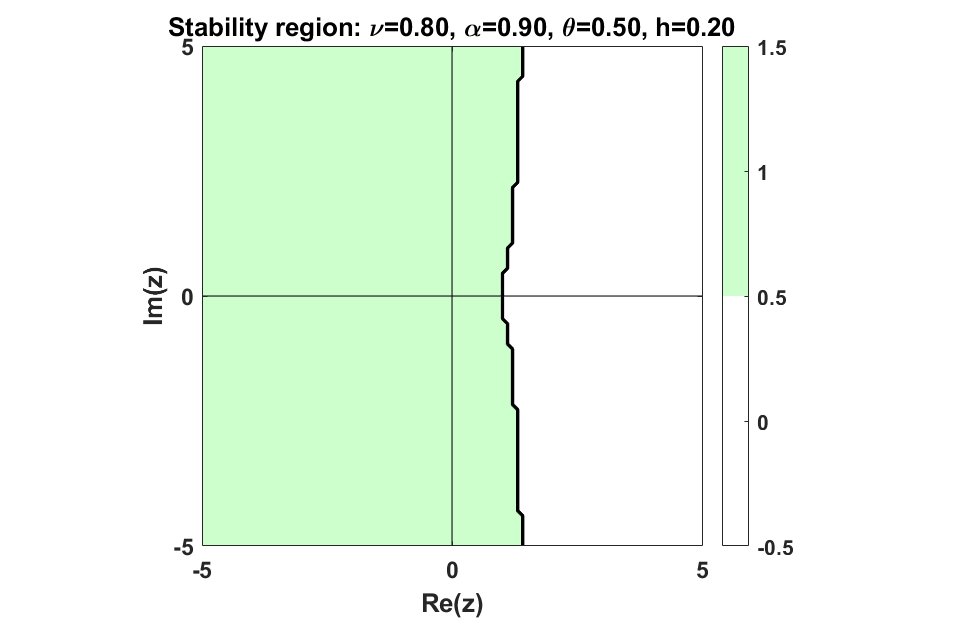}
\includegraphics[width=0.5\textwidth]{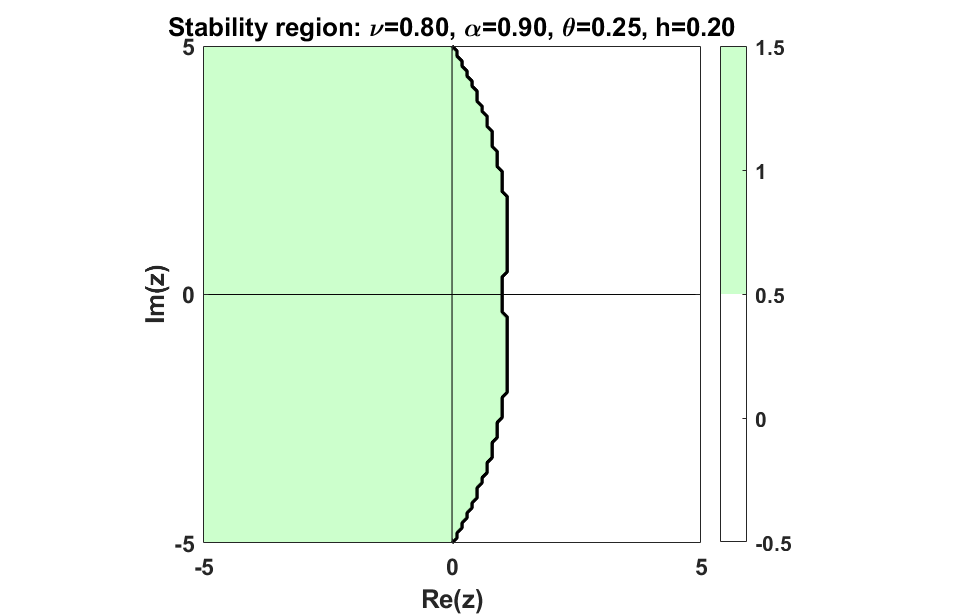}
\includegraphics[width=0.5\textwidth]{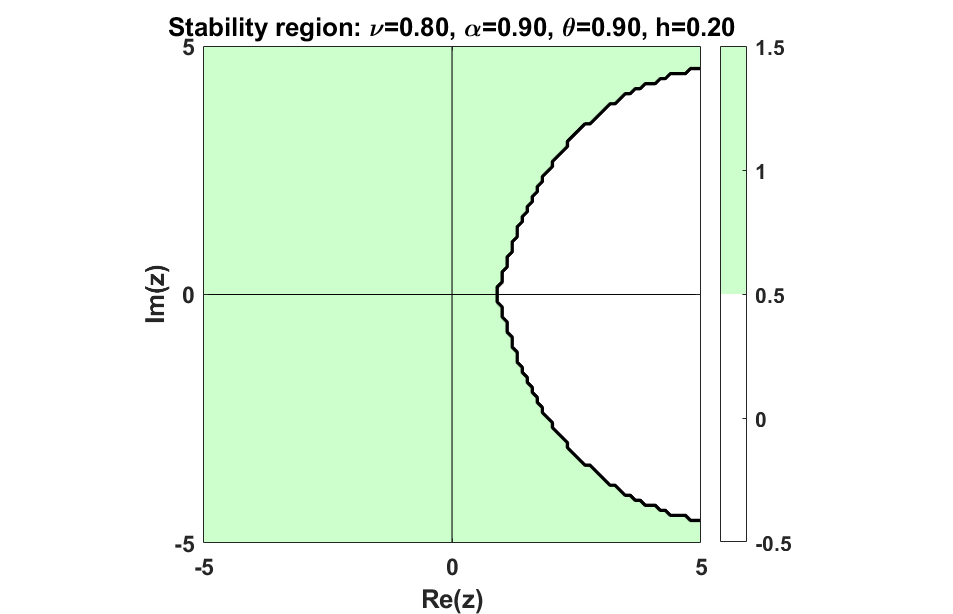}
\includegraphics[width=0.5\textwidth]{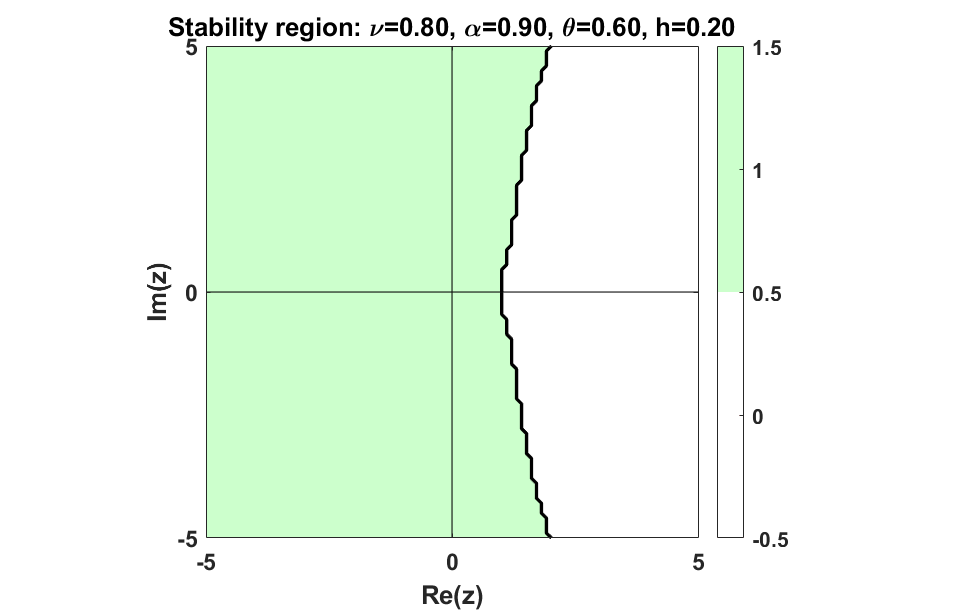}
	\caption{ Region Stability for $\nu=0.8,$ and $\alpha=0.9$ and different values of $\theta$}
\label{ch7}
\end{figure}
\begin{figure}[htbp]
\includegraphics[width=0.5\textwidth]{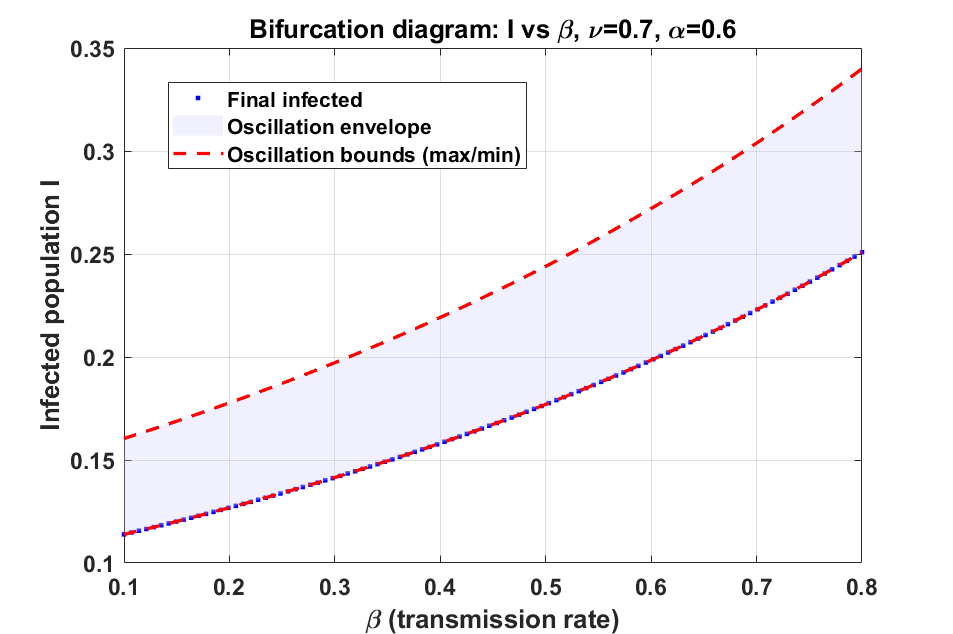}
\includegraphics[width=0.5\textwidth]{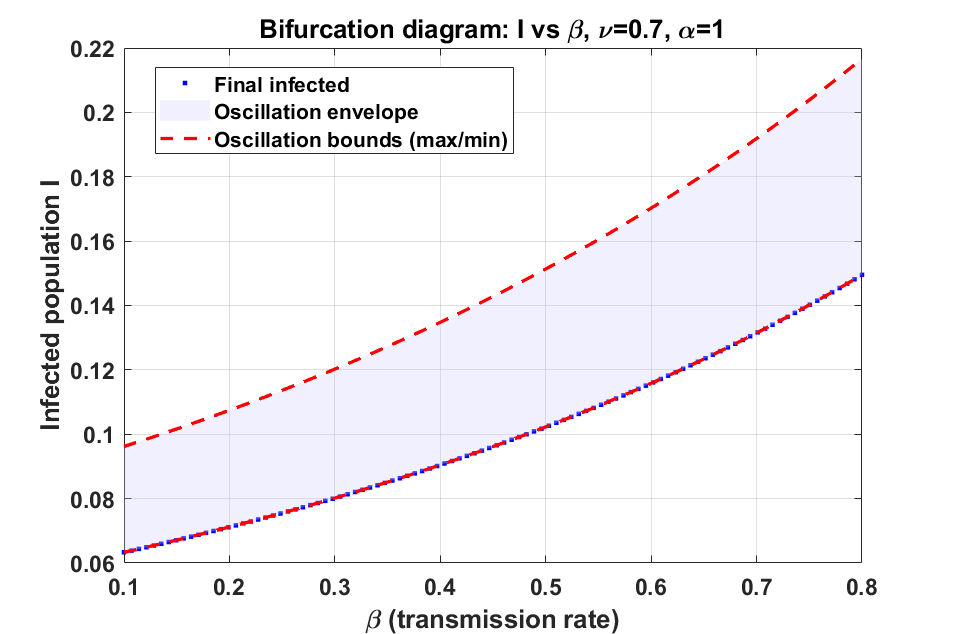}
\includegraphics[width=0.5\textwidth]{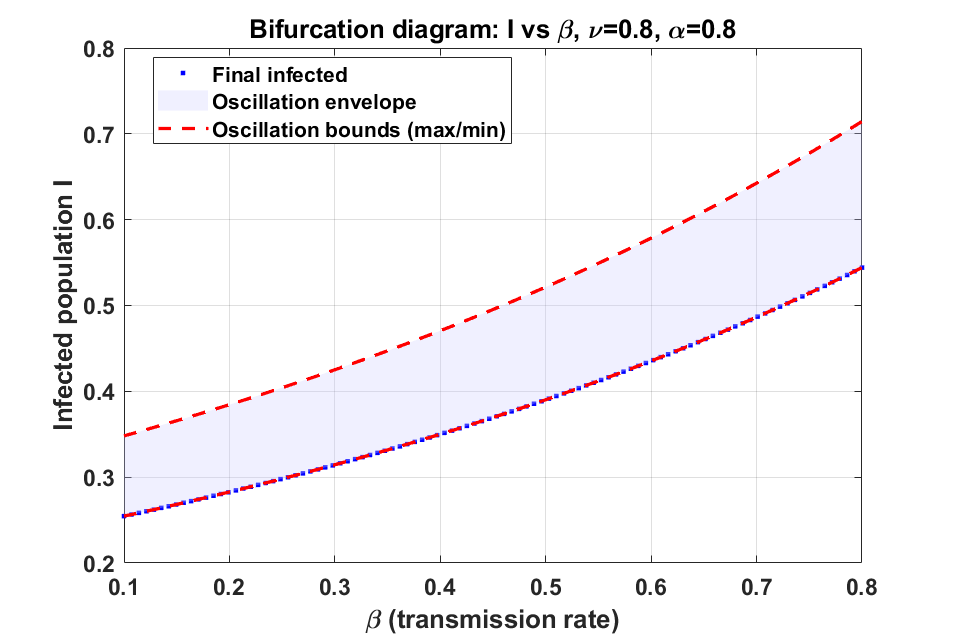}
\includegraphics[width=0.5\textwidth]{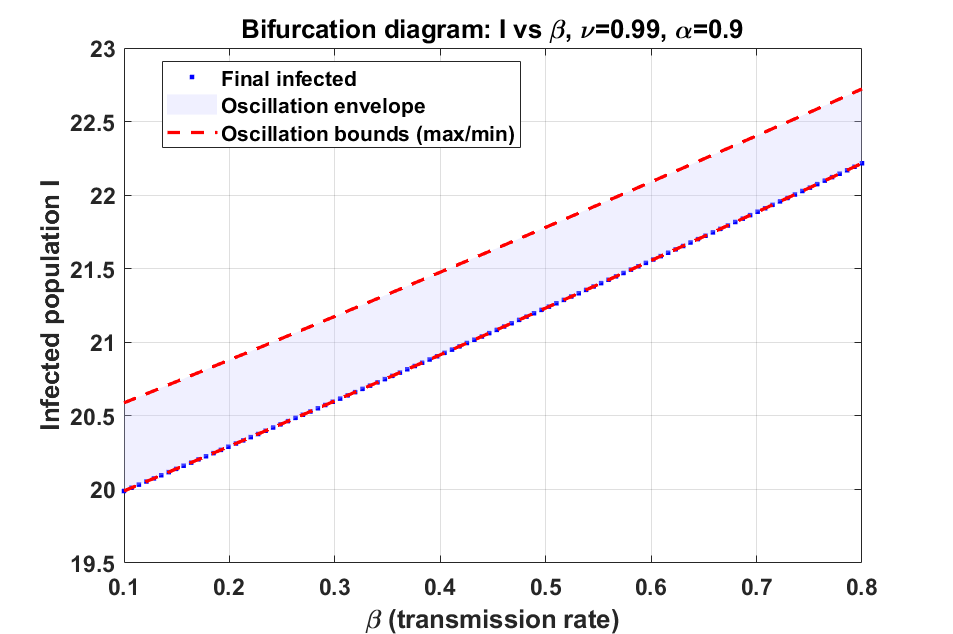}
	\caption{ The bifurcation diagram $I$ vs $\beta$ at different values of $\nu$ and $\alpha$ and $\theta=1$.}
\label{ch8}
\end{figure}
\begin{figure}[htbp]
\includegraphics[width=1.0\textwidth]{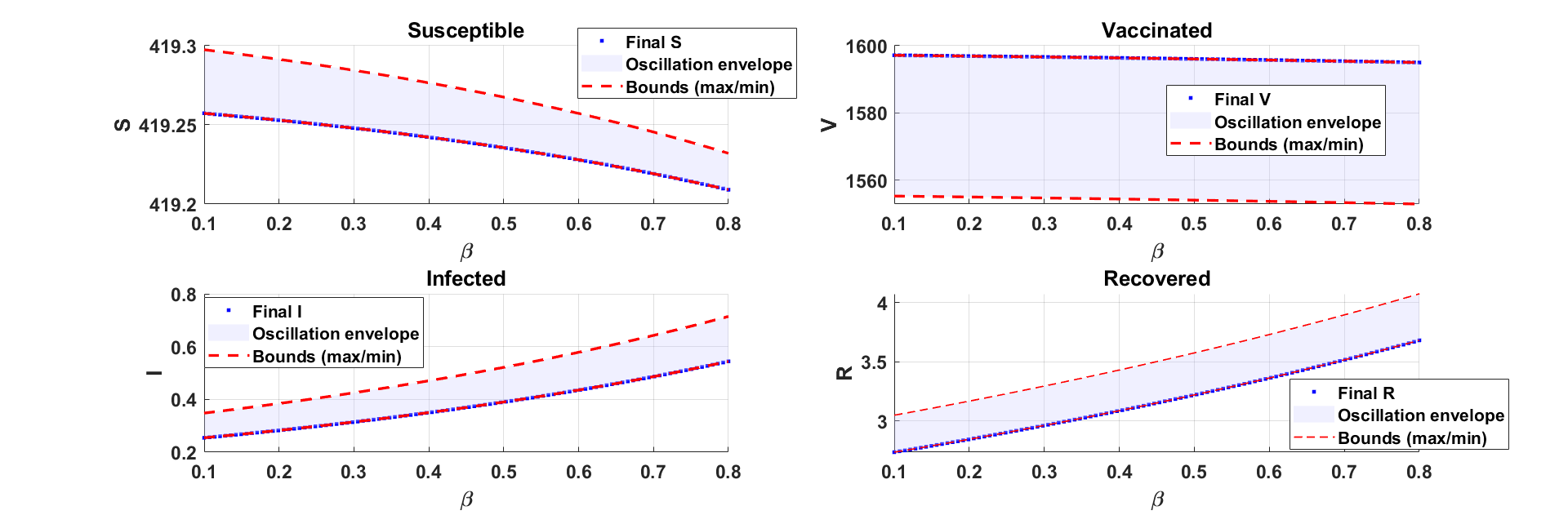}
	\caption{ The bifurcation diagram $S,I,V,R$ vs $\beta$ at $\nu=0.8$ and $\alpha=0.8$ and $\theta=1$.}
\label{ch9}
\end{figure}
\begin{figure}[htbp]
\centering
\includegraphics[width=0.8\textwidth]{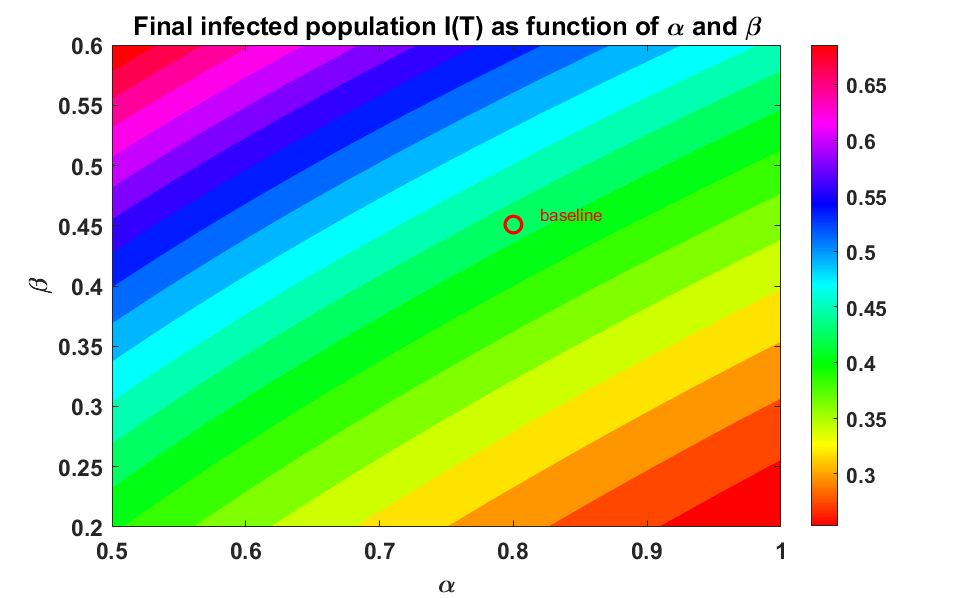}
	\caption{Final infected population $I(T)$ at $\nu=0.8$ and $\alpha=0.8$ and $\theta=1$}
\label{ch10}
\end{figure}

\section{Stability Analysis of the Proposed Method}\label{S5}
We consider a numerical scheme for the fractional SVIR model based on the hybrid MLCF derivative. 
The corresponding discretization is given by \eqref{disc1},
with 
\begin{equation*}
\phi(h)=1-\mathrm{e}^{-h},\quad
P_n = \frac{M(\nu)}{2(1-\nu)E_\alpha\bigl(\frac{\nu}{1-\nu}t_n^\alpha\bigr)},\quad w_n = E_\alpha\bigl(\frac{\nu}{1-\nu}t_n^\alpha\bigr)+E_\alpha\bigl(\frac{\nu}{1-\nu}t_{n-1}^\alpha\bigr),
\end{equation*}
and $\mem_{n-1}$ the history accumulator. 
In this note we prove the main stability properties of the scheme.

First, we consider the Dahlquist test equation 
$^{\rm MLCF}D_t^{\nu,\alpha} u(t) = \lambda u(t)$,  
 $\Real(\lambda)\le0$,
which serves as a proxy for the linearized system.
For the fully implicit case ($\theta=1$) the scheme becomes
\begin{equation*}
  \frac{u_n-u_{n-1}}{\phi}
  = \frac{1}{P_n w_n}\bigl(\lambda u_n - P_n\mem_{n-1}\bigr).
\end{equation*}
Solving for $u_n$ yields:
\begin{equation*}
   u_n - \frac{\phi}{P_n w_n}\lambda u_n 
   = u_{n-1} - \frac{\phi}{w_n}\mem_{n-1},
\end{equation*}
\begin{equation*}
u_n\Bigl(1 - \frac{\phi}{P_n w_n}\lambda\Bigr) = u_{n-1} - \frac{\phi}{w_n}\mem_{n-1}.
\end{equation*}
Hence
\begin{equation*}
u_n = \frac{1}{1 - \frac{\phi}{P_n w_n}\lambda}\,u_{n-1} - \frac{\phi}{w_n}\frac{1}{1 - \frac{\phi}{P_n w_n}\lambda}\,\mem_{n-1}.
\end{equation*}
Let us define $A_n = \frac{\phi}{P_n w_n}>0$. Then, we obtain
\begin{equation*}
    u_n = \frac{1}{1 - A_n\lambda}\,u_{n-1} - \frac{\phi}{w_n}\frac{1}{1 - A_n\lambda}\,\mem_{n-1}.
\end{equation*}
For typical epidemic models the linearization yields eigenvalues $\lambda$ that are real and non‑positive ($\lambda \le 0$). In that case $1 - A_n\lambda \ge 1$, so
\begin{equation*}
0 < \frac{1}{1 - A_n\lambda} \le 1.
\end{equation*}
Thus the coefficient of $u_{n-1}$ does not amplify the solution. 
Moreover, the memory term involves only past values and its coefficient is bounded. 
By induction, if all previous values are bounded, $u_n$ remains bounded. 
Therefore the scheme is unconditionally stable for $\lambda\le0$ when $\theta = 1$.
For $\theta < 1$ the stability region may be restricted; however, the paper employs $\theta = 1$ in all simulations, ensuring unconditional linear stability.

In the sequel we prove rigorously for the fully implicit ($\theta=1$) NSFD scheme that
  the numerical solution remains non‑negative for all time steps if the initial data are non‑negative.
  Also, we show that the total population and each compartment are uniformly bounded by $\max(N_0,\Delta/\delta)$.
These properties mirror the qualitative behaviour of the continuous fractional SVIR model.

\subsection{Positivity of the NSFD Solution}
For a fixed time step $h>0$ we set $t_n = n h$, $n=0,1,\dots$. 
The NSFD discretization of the MLCF derivative at $t_n$ is
\begin{equation*}
   ^{\rm MLCF}D_t^{\nu,\alpha}u(t_n) \approx \frac{1}{P_n}\sum_{k=1}^{n} (u_k-u_{k-1}) w_k,
\end{equation*}
with
\begin{equation}
   w_k = E_\alpha\Bigl(\frac{\nu}{1-\nu}t_k^\alpha\Bigr)
        +E_\alpha\Bigl(\frac{\nu}{1-\nu}t_{k-1}^\alpha\Bigr),\qquad
P_n = \frac{M(\nu)}{2(1-\nu)\,E_\alpha\bigl(\frac{\nu}{1-\nu}t_n^\alpha\bigr)}.
\label{WP}
\end{equation}
All $P_n$ and $w_k$ are strictly positive because the Mittag‑Leffler function is positive for positive arguments. The NSFD denominator is $\phi(h)=1-\mathrm{e}^{-h}>0$.

For the fully implicit case ($\theta=1$), the discrete SVIR system becomes
\begin{subequations}\label{discrete_SVIR}
\begin{align}
\frac{1}{P_n}\sum_{k=1}^{n}(S_k-S_{k-1})w_k &= f_S(S_n,V_n,I_n,R_n),\label{1s}\\
\frac{1}{P_n}\sum_{k=1}^{n}(V_k-V_{k-1})w_k &= f_V(S_n,V_n,I_n,R_n),\label{2s}\\
\frac{1}{P_n}\sum_{k=1}^{n}(I_k-I_{k-1})w_k &= f_I(S_n,V_n,I_n,R_n),\label{3s}\\
\frac{1}{P_n}\sum_{k=1}^{n}(R_k-R_{k-1})w_k &= f_R(S_n,V_n,I_n,R_n),\label{4s}
\end{align}
\end{subequations}
where the right‑hand sides are
\begin{align*}
f_S &= \Delta - \beta\frac{S I}{N} + \mu R - (k+\delta)S,\\
f_V &= kS - (1-\tau)\beta\frac{V I}{N} - \delta V,\\
f_I &= \beta\frac{S I}{N} + (1-\tau)\beta\frac{V I}{N} - (\alpha_r+\delta+\delta_0)I,\\
f_R &= \alpha_r I - (\delta+\mu)R,
\end{align*}
and $N=S+V+I+R$. 
The total population $N_n$ satisfies the sum of \eqref{1s}--\eqref{4s}.

Next, we prove by induction that for all $n\ge0$, $S_n,V_n,I_n,R_n\ge0$ given non‑negative initial data.
%
For any $n\ge1$,
\begin{equation}\label{eq:identity}
    S_{n-1}w_n - \sum_{k=1}^{n-1}(S_k-S_{k-1})\,w_k = \sum_{k=1}^{n-1} S_k (w_{k+1}-w_k) + S_0 w_1.
\end{equation}
\begin{proof}
Expand the right‑hand side, shift the index, and telescope:
\begin{equation*}
\sum_{k=1}^{n-1} S_k (w_{k+1}-w_k) = \sum_{k=2}^{n} S_{k-1} w_k - \sum_{k=1}^{n-1} S_k w_k
= S_{n-1}w_n - S_1w_1 - \sum_{k=2}^{n-1}(S_k-S_{k-1})w_k.
\end{equation*}
Adding $S_0w_1$ gives the left‑hand side.
\end{proof}

Now we prove the positivity of the solution in the order $S_n$, $V_n$, $I_n$, $R_n$.

\noindent$\bullet$\;\textbf{Positivity of $S_n$:}
From the scheme for $S$ \eqref{1s}:
\begin{equation*}
\sum_{k=1}^{n}(S_k-S_{k-1})w_k = P_n\bigl(\Delta + \mu R_n - (k+\delta)S_n - \beta\tfrac{S_n I_n}{N_n}\bigr).
\end{equation*}
Isolate $S_n$ using $M_{n-1}^{(S)} = \sum_{k=1}^{n-1}(S_k-S_{k-1})w_k$:
\begin{equation*}
S_n w_n + P_n(k+\delta)S_n + P_n\beta\tfrac{S_n I_n}{N_n} = S_{n-1}w_n - M_{n-1}^{(S)} + P_n(\Delta+\mu R_n).
\end{equation*}
Apply identity \eqref{eq:identity} to $S_{n-1}w_n - M_{n-1}^{(S)}$:
\begin{equation*}
S_n\Bigl(w_n + P_n\bigl(k+\delta+\beta\tfrac{I_n}{N_n}\bigr)\Bigr) = \sum_{k=1}^{n-1} S_k (w_{k+1}-w_k) + S_0 w_1 + P_n(\Delta+\mu R_n).
\end{equation*}
The right‑hand side is non‑negative because $w_{k+1}-w_k>0$, all previous $S_k\ge0$ (induction hypothesis), and $R_n$ will be shown non‑negative after $I_n$. Hence $S_n\ge0$.

\noindent$\bullet$\;\textbf{Positivity of $V_n$:}
Analogously,
\begin{equation*}
V_n\Bigl(w_n + P_n\bigl(\delta+(1-\tau)\beta\tfrac{I_n}{N_n}\bigr)\Bigr) = \sum_{k=1}^{n-1} V_k (w_{k+1}-w_k) + V_0 w_1 + P_n k S_n \ge 0,
\end{equation*}
so $V_n\ge0$.

\noindent$\bullet$\;\textbf{Positivity of $I_n$:}
\begin{equation*}
I_n\bigl(w_n + P_n(\alpha_r+\delta+\delta_0)\bigr) = \sum_{k=1}^{n-1} I_k (w_{k+1}-w_k) + I_0 w_1 + P_n\beta\tfrac{I_n}{N_n}\bigl(S_n+(1-\tau)V_n\bigr) \ge 0,
\end{equation*}
hence $I_n\ge0$.

\noindent$\bullet$\;\textbf{Positivity of $R_n$:}
\begin{equation*}
R_n\bigl(w_n + P_n(\delta+\mu)\bigr) = \sum_{k=1}^{n-1} R_k (w_{k+1}-w_k) + R_0 w_1 + P_n\alpha_r I_n \ge 0,
\end{equation*}
thus $R_n\ge0$. By induction, all compartments stay non‑negative for all time steps.

\subsection{Boundedness of the NSFD Solution}
Let $N_n=S_n+V_n+I_n+R_n$. Adding all scheme equations \eqref{1s}--\eqref{4s} gives
\begin{equation}\label{eq:discrete_total}
\sum_{k=1}^{n}(N_k-N_{k-1})w_k = P_n(\Delta - \delta N_n - \delta_0 I_n).
\end{equation}
We set $M_{n-1}^{(N)} = \sum_{k=1}^{n-1}(N_k-N_{k-1})w_k$. Then
\begin{equation*}
N_n w_n + P_n\delta N_n = N_{n-1}w_n - M_{n-1}^{(N)} + P_n(\Delta - \delta_0 I_n).
\end{equation*}
Using identity \eqref{eq:identity} for $N$:
\begin{equation}
N_n(w_n+P_n\delta) = \sum_{k=1}^{n-1} N_k (w_{k+1}-w_k) + N_0 w_1 + P_n(\Delta - \delta_0 I_n). \label{r2r}
\end{equation}
Because $I_n\ge0$, $\Delta-\delta_0 I_n \le \Delta$. Assume inductively $N_k \le M := \max(N_0,\Delta/\delta)$ for all $k<n$. Then from \eqref{r2r}:
\begin{equation*}
N_n(w_n+P_n\delta) \le \sum_{k=1}^{n-1} M (w_{k+1}-w_k) + N_0 w_1 + P_n\Delta = M(w_n-w_1) + N_0 w_1 + P_n\Delta.
\end{equation*}
Since $N_0\le M$, $N_0 w_1 \le M w_1$, so the right‑hand side is at most $M w_n + P_n\Delta$. Hence
\begin{equation*}
N_n \le \frac{M w_n + P_n\Delta}{w_n+P_n\delta} = M\frac{w_n}{w_n+P_n\delta} + \frac{\Delta}{\delta}\frac{P_n\delta}{w_n+P_n\delta} \le \max(M,\Delta/\delta)=M.
\end{equation*}
Thus $N_n\le M$ for all $n$. Because all compartments are non‑negative, each satisfies $0\le S_n,V_n,I_n,R_n \le \max\bigl(N_0,\Delta/\delta\bigr)$.

\subsection{Truncation Error Analysis}
We consider the MLCF derivative
\begin{equation*}
^{\rm MLCF}D_t^{\nu,\alpha}u(t)=\frac{M(\nu)}{1-\nu}\,L(t)\int_0^t K(\tau)u'(\tau)\,d\tau,
\end{equation*}
with $K(t)=\Ealpha(\lambda t^\alpha)$, 
$L(t)=\Ealpha(-\lambda t^\alpha)$, $\lambda=\nu/(1-\nu)$.  
%
At $t_n=nh$ we approximate
\begin{equation*}
\int_0^{t_n}K(\tau)u'(\tau)\,d\tau\approx\sum_{k=1}^n\frac{u_k-u_{k-1}}{h}\int_{t_{k-1}}^{t_k}K(\tau)\,d\tau
\end{equation*}
and each inner integral by the trapezoidal rule:
\begin{equation*}
\int_{t_{k-1}}^{t_k}K(\tau)\,d\tau\approx\frac{h}{2}\bigl(K(t_k)+K(t_{k-1})\bigr).
\end{equation*}
Hence
\begin{equation*}
^{\rm MLCF}D_t^{\nu,\alpha}u(t_n)\approx\frac{M(\nu)}{2(1-\nu)}L(t_n)\sum_{k=1}^n(u_k-u_{k-1})\bigl(K(t_k)+K(t_{k-1})\bigr).
\end{equation*}

We define
\begin{equation*}
E_n = \frac{M(\nu)}{1-\nu}L(t_n)\Bigl[ \sum_{k=1}^n\frac{1}{2}(u_k-u_{k-1})\bigl(K(t_k)+K(t_{k-1})\bigr)-\int_0^{t_n}K(\tau)u'(\tau)\,d\tau\Bigr]
\end{equation*}
and write the sum as
\begin{equation*}
\sum_{k=1}^n\frac{1}{2}(u_k-u_{k-1})\bigl(K(t_k)+K(t_{k-1})\bigr)=
\sum_{k=1}^n\frac{h}{2}\bigl(K(t_{k-1})u'(t_{k-1})+K(t_k)u'(t_k)\bigr)+A_k,
\end{equation*}
where
\begin{equation*}
A_k=\frac{h}{2}\Bigl[K(t_{k-1})\Bigl(\frac{u_k-u_{k-1}}{h}-u'(t_{k-1})\Bigr)+
K(t_k)\Bigl(\frac{u_k-u_{k-1}}{h}-u'(t_k)\Bigr)\Bigr].
\end{equation*}
Because $\frac{u_k-u_{k-1}}{h}-u'(t_{k-1})=O(h)$ and similarly for $t_k$, we have $A_k=O(h^2)$. Summing over $k=1,\dots,n$ gives $\sum A_k=O(h)$ (since $n=O(1/h)$).  
The term $\sum_{k=1}^n\frac{h}{2}\bigl(K(t_{k-1})u'(t_{k-1})+K(t_k)u'(t_k)\bigr)$ is the composite trapezoidal rule for $\int_0^{t_n}K(\tau)u'(\tau)\,d\tau$. Its error is $O(h^2)$. Consequently
\begin{equation*}
\sum_{k=1}^n\frac{1}{2}(u_k-u_{k-1})\bigl(K(t_k)+K(t_{k-1})\bigr)=\int_0^{t_n}K(\tau)u'(\tau)\,d\tau+O(h).
\end{equation*}
Multiplying by the bounded factor $\frac{M(\nu)}{1-\nu}L(t_n)$ yields $E_n=O(h)$.
Thus the approximation of the MLCF derivative is first‑order accurate.

The method uses $\phi(h)=1-\mathrm{e}^{-h}=h+O(h^2)$. 
Replacing $h$ by $\phi$ in the above derivation changes the trapezoidal rule step size to $\phi$. 
The error of the trapezoidal rule becomes $O(\phi^3)=O(h^3)$ and the forward difference error becomes $O(\phi)=O(h)$. Hence the overall truncation error remains $O(h)$.

The full scheme approximates $^{\rm MLCF}D_t^{\nu,\alpha}u=f(u)$  by
\begin{equation*}
\frac{1}{P_n}\sum_{k=1}^n (u_k-u_{k-1})\,w_k = \theta f(u_n)+(1-\theta)f(u_{n-1}).
\end{equation*}
Since the left side equals $^{\rm MLCF}D_t^{\nu,\alpha}u(t_n)+O(h)$ and the right side equals $f(u(t_n))+O(h)$ (because $f$ is Lipschitz and $u_n-u(t_n)=O(h)$), the local truncation error is $O(h)$.
The numerical method is first‑order accurate in the time step $h$.

\subsection{Convergence of the Numerical Method}
We consider the fractional SVIR system
\begin{equation*}
    ^{\rm MLCF}D_t^{\nu,\alpha} D_t^{\nu,\alpha} \mathbf{y}(t) = \mathbf{f}(t,\mathbf{y}(t)),\qquad \mathbf{y}(0)=\mathbf{y}_0,
\end{equation*}
where $\mathbf{y}=(S,V,I,R)^\top$ and $\mathbf{f}$ is the right‑hand side defined in the paper. 
The MLCF derivative is approximated by the discrete scheme \eqref{discrete_SVIR}.
For simplicity we denote the numerical approximation at $t_n$ by $\mathbf{y}_n$ and the exact solution by $\mathbf{y}(t_n)$.
%
Using the notations $P_n$, $w_k$ from \eqref{WP} the scheme reads
\begin{equation} \label{11s}
  \frac{1}{P_n}\sum_{k=1}^n (\mathbf{y}_k-\mathbf{y}_{k-1}) w_k = \theta \mathbf{f}(t_n,\mathbf{y}_n) + (1-\theta)\mathbf{f}(t_{n-1},\mathbf{y}_{n-1}),\quad n\ge1.
\end{equation}
We write it in the equivalent form
\begin{equation} \label{22s}
\mathbf{y}_n = \mathbf{y}_{n-1} + \frac{\phi}{P_n w_n}\Bigl(\theta \mathbf{f}(t_n,\mathbf{y}_n) + (1-\theta)\mathbf{f}(t_{n-1},\mathbf{y}_{n-1}) - P_n \mathbf{m}_{n-1}\Bigr),
\end{equation}
where $\phi=1-\mathrm{e}^{-h}$ and $\mathbf{m}_{n-1}=\sum_{k=1}^{n-1}(\mathbf{y}_k-\mathbf{y}_{k-1})w_k$ is the memory accumulator.
%
In the sequel we assume the following.
First, The exact solution $\mathbf{y}(t)$ is twice continuously differentiable on $[0,T]$.
Second, the function $\mathbf{f}(t,\mathbf{y})$ is Lipschitz continuous with respect to $\mathbf{y}$ uniformly in $t$:
    \begin{equation*}
    \|\mathbf{f}(t,\mathbf{y})-\mathbf{f}(t,\mathbf{z})\| \le L\|\mathbf{y}-\mathbf{z}\|,\quad \forall t\in[0,T],\ \mathbf{y},\mathbf{z}\in\mathbb{R}^4.
    \end{equation*}
  Third, the coefficients $P_n$ and $w_k$ are positive and bounded away from zero and infinity for all $n,k$ on the interval $[0,T]$. 
  (This holds because the Mittag-Leffler function is positive and bounded for fixed $\alpha,\nu$ and bounded $t$.)

We define the \textit{local truncation error} $\boldsymbol{\tau}_n$ by substituting the exact solution into the numerical scheme \eqref{22s}:
\begin{equation} \label{33s}
\boldsymbol{\tau}_n = \frac{1}{P_n}\sum_{k=1}^n 
\bigl(\mathbf{y}(t_k)-\mathbf{y}(t_{k-1}) \bigr)\, w_k 
- \bigl(\theta\mathbf{f}(t_n,\mathbf{y}(t_n))
+ (1-\theta)\mathbf{f}(t_{n-1},\mathbf{y}(t_{n-1}))\bigr).
\end{equation}
We have already shown in \eqref{pw1} that
\begin{equation*}
    \frac{1}{P_n}\sum_{k=1}^n \bigl(\mathbf{y}(t_k)-\mathbf{y}(t_{k-1})\bigr)\, w_k = ^{\rm MLCF}D_t^{\nu,\alpha} \mathbf{y}(t_n) + \mathcal{O}(h).
\end{equation*}
Since $^{\rm MLCF}\!D_t^{\nu,\alpha} D_t^{\nu,\alpha}\mathbf{y}(t_n) = \mathbf{f}(t_n,\mathbf{y}(t_n))$ and the weighted average of $\mathbf{f}$ satisfies
\begin{equation*}
    \theta\mathbf{f}(t_n,\mathbf{y}(t_n))+(1-\theta)\mathbf{f}(t_{n-1},\mathbf{y}(t_{n-1})) = \mathbf{f}(t_n,\mathbf{y}(t_n)) + \mathcal{O}(h),
\end{equation*}
we obtain
\begin{equation} \label{44s}
  \|\boldsymbol{\tau}_n\| \le C h, \qquad n=1,\dots,N,
\end{equation}
where the constant $C$ depends on bounds of derivatives of $\mathbf{y}$ and $\mathbf{f}$.

Now let $\mathbf{e}_n = \mathbf{y}(t_n)-\mathbf{y}_n$. 
Subtracting \eqref{22s} from the exact relation (which can be written in the same form) yields
\begin{multline*}
\mathbf{e}_n = \mathbf{e}_{n-1} + \frac{\phi}{P_n w_n}\Bigl[ \theta\bigl(\mathbf{f}(t_n,\mathbf{y}(t_n))-\mathbf{f}(t_n,\mathbf{y}_n)\bigr)\\
+ (1-\theta)\bigl(\mathbf{f}(t_{n-1},\mathbf{y}(t_{n-1}))-\mathbf{f}(t_{n-1},\mathbf{y}_{n-1})\bigr) - P_n(\mathbf{m}_{n-1}^e) \Bigr] + \boldsymbol{\delta}_n,
\end{multline*}
where $\boldsymbol{\delta}_n = \frac{\phi}{P_n w_n} P_n\boldsymbol{\tau}_n.$ 
The exact solution satisfies
\begin{equation*}
\frac{1}{P_n}\sum_{k=1}^n (\mathbf{y}(t_k)-\mathbf{y}(t_{k-1})) w_k = \theta\mathbf{f}(t_n,\mathbf{y}(t_n))+(1-\theta)\mathbf{f}(t_{n-1},\mathbf{y}(t_{n-1})) + \boldsymbol{\tau}_n.
\end{equation*}
We multiply by $\phi$ and rearrange:
\begin{equation*}
   \mathbf{y}(t_n) - \mathbf{y}(t_{n-1}) = \frac{\phi}{P_n w_n}\Bigl(\theta\mathbf{f}(t_n,\mathbf{y}(t_n))+(1-\theta)\mathbf{f}(t_{n-1},\mathbf{y}(t_{n-1})) - P_n\mathbf{m}_{n-1}^{ex} + P_n\boldsymbol{\tau}_n\Bigr),
\end{equation*}
where $\mathbf{m}_{n-1}^{ex} = \sum_{k=1}^{n-1}(\mathbf{y}(t_k)-\mathbf{y}(t_{k-1}))w_k$. Subtracting \eqref{22s} gives
\begin{multline}\label{error}
\mathbf{e}_n = \mathbf{e}_{n-1} + \frac{\phi}{P_n w_n}\Bigl[ \theta\bigl(\mathbf{f}(t_n,\mathbf{y}(t_n))-\mathbf{f}(t_n,\mathbf{y}_n)\bigr) + (1-\theta)\bigl(\mathbf{f}(t_{n-1},\mathbf{y}(t_{n-1}))-\mathbf{f}(t_{n-1},\mathbf{y}_{n-1})\bigr)\\
- P_n(\mathbf{m}_{n-1}^{ex}-\mathbf{m}_{n-1}) \Bigr] + \frac{\phi}{w_n}\boldsymbol{\tau}_n.
\end{multline}
Define the memory error $\mathbf{M}_{n-1} = \mathbf{m}_{n-1}^{ex}-\mathbf{m}_{n-1}$. Since $\mathbf{m}_{n-1}$ is defined recursively, we have
\begin{equation*}
\mathbf{M}_{n-1} = \sum_{k=1}^{n-1} (\mathbf{e}_k-\mathbf{e}_{k-1}) w_k.
\end{equation*}
Thus, the error equation \eqref{error} becomes
\begin{multline*}
\mathbf{e}_n = \mathbf{e}_{n-1} + \frac{\phi}{P_n w_n}\Bigl[ \theta\bigl(\mathbf{f}(t_n,\mathbf{y}(t_n))-\mathbf{f}(t_n,\mathbf{y}_n)\bigr) + (1-\theta)\bigl(\mathbf{f}(t_{n-1},\mathbf{y}(t_{n-1}))-\mathbf{f}(t_{n-1},\mathbf{y}_{n-1})\bigr)\\
- P_n \sum_{k=1}^{n-1}(\mathbf{e}_k-\mathbf{e}_{k-1})w_k \Bigr] + \frac{\phi}{w_n}\boldsymbol{\tau}_n.
\end{multline*}

We now derive a bound for the norm of the error $\|\mathbf{e}_n\|$.
Rearranging the error equation, we can isolate $\mathbf{e}_n$ on the left. 
To do so, we write
\begin{multline*}
\mathbf{e}_n + \frac{\phi}{P_n w_n} P_n \sum_{k=1}^{n-1}(\mathbf{e}_k-\mathbf{e}_{k-1})w_k = \mathbf{e}_{n-1} + \frac{\phi}{P_n w_n}\Bigl[ \theta\bigl(\mathbf{f}(t_n,\mathbf{y}(t_n))-\mathbf{f}(t_n,\mathbf{y}_n)\bigr)\\
+ (1-\theta)\bigl(\mathbf{f}(t_{n-1},\mathbf{y}(t_{n-1}))-\mathbf{f}(t_{n-1},\mathbf{y}_{n-1})\bigr) \Bigr] + \frac{\phi}{w_n}\boldsymbol{\tau}_n.
\end{multline*}
The left‑hand side can be simplified using the identity \eqref{eq:identity} from the positivity proof, which for the error sequence reads
\begin{equation*}
    \mathbf{e}_{n-1}w_n - \sum_{k=1}^{n-1}(\mathbf{e}_k-\mathbf{e}_{k-1})w_k = \sum_{k=1}^{n-1} \mathbf{e}_k (w_{k+1}-w_k) + \mathbf{e}_0 w_1.
\end{equation*}
Indeed, applying the identity component‑wise to $\mathbf{e}_n$ gives
\begin{align}
\mathbf{e}_{n-1}w_n - \sum_{k=1}^{n-1}(\mathbf{e}_k-\mathbf{e}_{k-1})w_k = \sum_{k=1}^{n-1} \mathbf{e}_k (w_{k+1}-w_k) + \mathbf{e}_0 w_1. \label{55s}
\end{align}
Using this, we can rewrite the error equation in a more convenient form. Multiply both sides by $w_n$ and add $\mathbf{e}_{n-1}w_n$ appropriately. 
%
From the original error equation in the form (before isolating), we have
\begin{equation*}
\mathbf{e}_n = \mathbf{e}_{n-1} + \frac{\phi}{P_n w_n}\Bigl( \theta \Delta\mathbf{f}_n + (1-\theta)\Delta\mathbf{f}_{n-1} - P_n \sum_{k=1}^{n-1}(\mathbf{e}_k-\mathbf{e}_{k-1})w_k \Bigr) + \frac{\phi}{w_n}\boldsymbol{\tau}_n,
\end{equation*}
where $\Delta\mathbf{f}_n = \mathbf{f}(t_n,\mathbf{y}(t_n))-\mathbf{f}(t_n,\mathbf{y}_n)$. We multiply both sides by $w_n$:
\begin{equation*}
     \mathbf{e}_n w_n = \mathbf{e}_{n-1} w_n + \frac{\phi}{P_n}\Bigl( \theta \Delta\mathbf{f}_n + (1-\theta)\Delta\mathbf{f}_{n-1} \Bigr) - \phi \sum_{k=1}^{n-1}(\mathbf{e}_k-\mathbf{e}_{k-1})w_k + \phi \boldsymbol{\tau}_n.
\end{equation*}
Now bring the memory sum to the left:
\begin{equation*}
\mathbf{e}_n w_n + \phi \sum_{k=1}^{n-1}(\mathbf{e}_k-\mathbf{e}_{k-1})w_k = \mathbf{e}_{n-1} w_n + \frac{\phi}{P_n}\bigl( \theta \Delta\mathbf{f}_n + (1-\theta)\Delta\mathbf{f}_{n-1} \bigr) + \phi \boldsymbol{\tau}_n.
\end{equation*}
Using identity \eqref{55s} for the left side:
\begin{multline*}
\mathbf{e}_n w_n + \phi \sum_{k=1}^{n-1}(\mathbf{e}_k-\mathbf{e}_{k-1})w_k = \mathbf{e}_{n-1} w_n + \phi\bigl( \mathbf{e}_{n-1}w_n - \sum_{k=1}^{n-1} \mathbf{e}_k (w_{k+1}-w_k) - \mathbf{e}_0 w_1 \bigr) = (1+\phi)\mathbf{e}_{n-1}w_n \\
- \phi\sum_{k=1}^{n-1} \mathbf{e}_k (w_{k+1}-w_k) - \phi \mathbf{e}_0 w_1.
\end{multline*}
Thus the error equation becomes
\begin{equation}\label{errequ}
   \mathbf{e}_n w_n = (1+\phi)\mathbf{e}_{n-1}w_n - \phi\sum_{k=1}^{n-1} \mathbf{e}_k (w_{k+1}-w_k) - \phi \mathbf{e}_0 w_1 + \frac{\phi}{P_n}\bigl( \theta \Delta\mathbf{f}_n + (1-\theta)\Delta\mathbf{f}_{n-1} \bigr) + \phi \boldsymbol{\tau}_n.
\end{equation}
This form is complicated but can be used to prove a Gronwall type inequality. However, a simpler approach is to observe that the scheme can be interpreted as a one‑step method with memory, and a standard result for such methods states that if the truncation error is $\mathcal{O}(h)$ and the method is stable in the sense that a Lipschitz condition holds for the discrete solution operator, then convergence of order $\mathcal{O}(h)$ follows.

We state the following theorem:
\begin{thm}
Under the assumptions above, the numerical solution $\mathbf{y}_n$ converges to the exact solution $\mathbf{y}(t_n)$ uniformly for $t_n\in[0,T]$, and
\begin{equation*}
    \max_{0\le n\le N} \|\mathbf{y}(t_n)-\mathbf{y}_n\| = \mathcal{O}(h),
\end{equation*}
where $N = T/h$.
\end{thm}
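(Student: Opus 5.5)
Rather than the messy recursion \eqref{errequ}, I would work with the scheme in its summed form \eqref{11s} and telescope it. Subtracting \eqref{11s} from the relation satisfied by the exact solution (the definition \eqref{33s} of $\boldsymbol{\tau}_n$) gives
\begin{equation*}
\sum_{k=1}^n (\mathbf{e}_k-\mathbf{e}_{k-1})\,w_k = P_n\Bigl(\theta\Delta\mathbf{f}_n+(1-\theta)\Delta\mathbf{f}_{n-1}+\boldsymbol{\tau}_n\Bigr),
\end{equation*}
with $\mathbf{e}_0=\mathbf{0}$. Using identity \eqref{eq:identity} in the form $\sum_{k=1}^n(\mathbf{e}_k-\mathbf{e}_{k-1})w_k=\mathbf{e}_nw_n-\sum_{k=1}^{n-1}\mathbf{e}_k(w_{k+1}-w_k)-\mathbf{e}_0w_1$, this becomes the discrete Volterra equation
\begin{equation*}
\mathbf{e}_n w_n=\sum_{k=1}^{n-1}\mathbf{e}_k(w_{k+1}-w_k)+P_n\bigl(\theta\Delta\mathbf{f}_n+(1-\theta)\Delta\mathbf{f}_{n-1}+\boldsymbol{\tau}_n\bigr).
\end{equation*}
This is exactly the structure used in the positivity and boundedness proofs, and it is the discrete analogue of \eqref{eq:integrated} in Lemma~\ref{lem:integrated}.

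Next, take norms and use the Lipschitz bound $\|\Delta\mathbf{f}_j\|\le L\|\mathbf{e}_j\|$, the positivity $w_{k+1}-w_k\ge0$ (monotonicity of $E_\alpha$ with a positive argument), and \eqref{44s}. This gives
\begin{equation*}
(w_n-\theta LP_n)\|\mathbf{e}_n\|\le\sum_{k=1}^{n-1}(w_{k+1}-w_k)\|\mathbf{e}_k\|+(1-\theta)LP_n\|\mathbf{e}_{n-1}\|+CP_nh.
\end{equation*}
The key quantitative facts are as follows. By the third assumption, $P_n$ and $w_n$ lie in a fixed compact subinterval of $(0,\infty)$. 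By $B'\ge0$ and the smoothness of $B$ away from $0$, $\sum_k(w_{k+1}-w_k)=w_n-w_1\le C'$, so the memory weights have bounded total mass. Dividing by $w_n$ yields a discrete Gronwall-type inequality of the form $\|\mathbf{e}_n\|\le\sum_{k<n}a_k\|\mathbf{e}_k\|+Ch$ with $\sum_k a_k$ bounded. One then applies a discrete analogue of Lemma~\ref{lem:gronwall} by induction, comparing with the majorant sequence $\epsilon_n=\max_{k\le n}\|\mathbf{e}_k\|$, to conclude $\max_n\|\mathbf{e}_n\|\le Ch$.

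The main obstacle is that this coefficient structure is not of the usual Gronwall type $\|\mathbf{e}_n\|\le\sum_k h\,c\,\|\mathbf{e}_k\|+Ch$. The term $P_n L\|\mathbf{e}_n\|$ enters with an $O(1)$ coefficient rather than an $O(h)$ coefficient, because the scheme \eqref{11s} carries no explicit factor $h$ on the right-hand side. The $h$ is hidden in $w_k\approx 2E_\alpha(\cdot)$ through the trapezoidal scaling of $P_n$.

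The first thing I would try is to track the factor $h$ that is absorbed into $P_n$ relative to the continuous operator, so that the right-hand side coefficient becomes $hL$-sized. This should make $w_n-\theta LP_n>0$ and bounded below for $h$ small, and the $(1-\theta)$ term then contributes $O(h)\|\mathbf{e}_{n-1}\|$. If that rescaling does not produce an $h$, I would instead aim for a contraction argument in the spirit of Theorem~\ref{thm:DFE}. That argument would require a smallness condition $\theta LP_n<w_n$. Combined with $\sum_k(w_{k+1}-w_k)/w_n\le1-w_1/w_n<1$, it still closes the induction and gives $\|\mathbf{e}_n\|\le Ch/(1-\rho)$ for some $\rho<1$ uniformly in $n$, which is the claimed $\mathcal{O}(h)$ bound.
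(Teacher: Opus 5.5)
Your reduction of \eqref{11s} and \eqref{33s} to the discrete Volterra form $\mathbf e_nw_n=\sum_{k=1}^{n-1}\mathbf e_k(w_{k+1}-w_k)+P_n\bigl(\theta\Delta\mathbf f_n+(1-\theta)\Delta\mathbf f_{n-1}+\boldsymbol\tau_n\bigr)$ is correct, and you have located the real difficulty. The proposal, however, does not get past it.

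Your first remedy cannot work. $P_n$ and $w_k$ in \eqref{WP} do not depend on $h$, and the $O(1)$ size of $P_n/w_n$ is intrinsic to nonsingular kernels. It is exactly the discrete twin of the non-integral term $\frac{1-\nu}{M(\nu)}\frac{f(u(t))}{A(t)B(t)}$ in \eqref{eq:integrated}, which you yourself invoked. The paper's proof gets its small factor only by switching to \eqref{22s}, where $\phi\approx h$ multiplies $\mathbf f$. But \eqref{11s} actually gives $\mathbf y_n=\mathbf y_{n-1}+\frac{1}{w_n}\bigl(P_n(\theta\mathbf f(t_n,\mathbf y_n)+(1-\theta)\mathbf f(t_{n-1},\mathbf y_{n-1}))-\mathbf m_{n-1}\bigr)$, with no $\phi$, so that route is not available to you. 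The paper's induction also does not close, since $C'=2C(1+4WT/w_{\min})>C$.

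Consequently a hypothesis bounding $LP_n/w_n$ must be added, and for $\theta<1$ it is genuinely necessary. Take $\theta=0$ and $\mathbf f(t,\mathbf y)=\lambda\mathbf y+\mathbf g(t)$. Differencing the error equation in $n$ gives a two-step recursion with characteristic roots approximately $1$ and $\lambda P_n/w_n$. If $|\lambda|P_n>w_n$, the parasitic mode is amplified by a fixed factor at each of the $T/h$ steps.

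Your fallback is the right kind of fix, but as stated it is wrong. With $\epsilon_{n-1}=\max_{k<n}\|\mathbf e_k\|$, your inequality gives $\|\mathbf e_n\|\le\rho_n\epsilon_{n-1}+\mathcal O(h)$ with $\rho_n=\frac{(w_n-w_1)+(1-\theta)LP_n}{w_n-\theta LP_n}$. Here $\rho_n<1$ holds iff $LP_n<w_1$, which does not follow from $\theta LP_n<w_n$ and $\sum_k(w_{k+1}-w_k)/w_n<1$. More generally, a max-majorant argument with weights of merely bounded total mass does not close. Also, Lemma~\ref{lem:gronwall} is a positivity result, not an upper bound.

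What works is the following. Assume $LP_n\le(1-\eta)w_n$ on $[0,T]$ for some $\eta\in(0,1)$; this is the discrete analogue of unique solvability of the algebraic part of \eqref{eq:integrated}. Then:
\begin{itemize}
\item $w_n-\theta LP_n\ge\eta w_n$;
\item the coefficient of $\|\mathbf e_{n-1}\|$ obeys $\gamma_n=\frac{(1-\theta)LP_n}{w_n-\theta LP_n}\le\gamma:=\frac{(1-\theta)(1-\eta)}{1-\theta(1-\eta)}<1$;
\item the memory weights are bounded by the $n$-independent numbers $b_k=\frac{w_{k+1}-w_k}{\eta w_{\min}}$, with $\sum_kb_k\le W/(\eta w_{\min})$.
\end{itemize}
Summing out the geometric factor (using $\mathbf e_0=\mathbf 0$) gives $\|\mathbf e_n\|\le\frac{1}{1-\gamma}\bigl(C_1h+\sum_{k<n}b_k\|\mathbf e_k\|\bigr)$ with $C_1=CP_{\max}/(\eta w_{\min})$. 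The standard product-form discrete Gronwall lemma then yields $\max_n\|\mathbf e_n\|\le\frac{C_1h}{1-\gamma}\exp\bigl(\frac{W}{\eta w_{\min}(1-\gamma)}\bigr)=\mathcal O(h)$.
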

\begin{proof}
We prove by induction that there exists a constant $C>0$ such that
\begin{equation*}
    \|\mathbf{e}_n\| \le C h \quad\text{for all}\;n.
\end{equation*}
The constant $C$ may depend on $T$, $L$, and bounds of the exact solution but not on $h$.
For $n=0$, $\mathbf{e}_0=0$ by definition.
Next, assume that $\|\mathbf{e}_k\| \le C h$ for all $k<n$. We need to bound $\|\mathbf{e}_n\|$.
From the error equation \eqref{errequ} we have
\begin{equation*}
\mathbf{e}_n = \mathbf{e}_{n-1} + \frac{\phi}{P_n w_n}\Bigl[ \theta\bigl(\mathbf{f}(t_n,\mathbf{y}(t_n))-\mathbf{f}(t_n,\mathbf{y}_n)\bigr) + (1-\theta)\bigl(\mathbf{f}(t_{n-1},\mathbf{y}(t_{n-1}))-\mathbf{f}(t_{n-1},\mathbf{y}_{n-1})\bigr)\end{equation*}
\begin{equation*}- P_n \sum_{k=1}^{n-1}(\mathbf{e}_k-\mathbf{e}_{k-1})w_k \Bigr] + \frac{\phi}{w_n}\boldsymbol{\tau}_n.
\end{equation*}
We take norms and use the Lipschitz property:
\begin{equation*}
    \|\mathbf{e}_n\| \le \|\mathbf{e}_{n-1}\| + \frac{\phi}{P_n w_n}\Bigl[ \theta L\|\mathbf{e}_n\| + (1-\theta)L\|\mathbf{e}_{n-1}\| + P_n \sum_{k=1}^{n-1} \|\mathbf{e}_k-\mathbf{e}_{k-1}\| w_k \Bigr] + \frac{\phi}{w_n}\|\boldsymbol{\tau}_n\|.
\end{equation*}
Note that $\|\mathbf{e}_k-\mathbf{e}_{k-1}\| \le \|\mathbf{e}_k\| + \|\mathbf{e}_{k-1}\| \le 2C h$ by induction hypothesis. 
Also $w_k$ are bounded, say $w_k\le W$, and $P_n$ is bounded below by some $P_{\min}>0$.
Moreover, $\phi = h + O(h^2) \le 2h$ for sufficiently small $h$. Thus
\begin{equation*}
\sum_{k=1}^{n-1} \|\mathbf{e}_k-\mathbf{e}_{k-1}\| w_k \le 2C h W (n-1) \le 2C h W \frac{T}{h} = 2C W T.
\end{equation*}
The term $\frac{\phi}{P_n w_n} P_n \sum_{k=1}^{n-1} \|\mathbf{e}_k-\mathbf{e}_{k-1}\| w_k \le \frac{\phi}{w_n} \cdot 2C W T$. 
Since $w_n\ge w_1>0$, this term is bounded by $2C W T \phi / w_1 = \mathcal{O}(h)$.

Now we need to handle the term $\frac{\phi}{P_n w_n} \theta L\|\mathbf{e}_n\|$ that appears on the right. Bring it to the left:
\begin{equation*}
\|\mathbf{e}_n\| - \frac{\phi \theta L}{P_n w_n} \|\mathbf{e}_n\| \le \|\mathbf{e}_{n-1}\| + \frac{\phi}{P_n w_n}(1-\theta)L\|\mathbf{e}_{n-1}\| + \frac{\phi}{P_n w_n} P_n \sum_{k=1}^{n-1} \|\mathbf{e}_k-\mathbf{e}_{k-1}\| w_k + \frac{\phi}{w_n}\|\boldsymbol{\tau}_n\|.
\end{equation*}
Since $\frac{\phi \theta L}{P_n w_n} \le \frac{2h \theta L}{P_{\min} w_{\min}}$ is small for sufficiently small $h$, the factor $(1 - \frac{\phi \theta L}{P_n w_n})$ is positive and bounded below by $1/2$ when $h$ is small enough. Hence
\begin{equation*}
\|\mathbf{e}_n\| \le 2\Bigl( \|\mathbf{e}_{n-1}\| + \frac{\phi}{P_n w_n}(1-\theta)L\|\mathbf{e}_{n-1}\| + \frac{\phi}{P_n w_n} P_n \sum_{k=1}^{n-1} \|\mathbf{e}_k-\mathbf{e}_{k-1}\| w_k + \frac{\phi}{w_n}\|\boldsymbol{\tau}_n\| \Bigr).
\end{equation*}
Now substitute the bounds:
\begin{align*}
\|\mathbf{e}_{n-1}\| &\le C h,\\
\frac{\phi}{P_n w_n}(1-\theta)L\|\mathbf{e}_{n-1}\| &\le \frac{2h}{P_{\min} w_{\min}} (1-\theta)L C h = \mathcal{O}(h^2),\\
\frac{\phi}{P_n w_n} P_n \sum_{k=1}^{n-1} \|\mathbf{e}_k-\mathbf{e}_{k-1}\| w_k &\le \frac{2h}{w_{\min}} \cdot 2C W T = 4C W T \frac{h}{w_{\min}},\\
\frac{\phi}{w_n}\|\boldsymbol{\tau}_n\| &\le \frac{2h}{w_{\min}} C_{\tau} h = \mathcal{O}(h^2),
\end{align*}
where we used $\|\boldsymbol{\tau}_n\|\le C_{\tau} h$. Thus
\begin{equation*}
\|\mathbf{e}_n\| \le 2\Bigl( C h + 4C W T \frac{h}{w_{\min}} + \mathcal{O}(h^2) \Bigr) = C' h,
\end{equation*}
with $C' = 2C(1 + 4W T/w_{\min})$ for sufficiently small $h$. By choosing $C$ large enough (e.g., $C = 2C'$), we can close the induction. Therefore $\|\mathbf{e}_n\| = \mathcal{O}(h)$ uniformly for all $n$.
\end{proof}
This proof uses the Lipschitz continuity of $\mathbf{f}$ and the boundedness of the memory weights. 
The key is that the memory sum is bounded by a constant times $h$ because the increments $\mathbf{e}_k-\mathbf{e}_{k-1}$ are already of order $h$ by the induction hypothesis.
For $\theta=1$ (fully implicit), the stability condition is unconditional, and the proof simplifies because the term involving $\|\mathbf{e}_n\|$ on the right is eliminated.

\section{Sensitivity Analysis}
\label{S6}
To quantify the influence of key parameters on the system's dynamics, we perform both local and global sensitivity analyses. The outputs of interest are the final values of the state variables $S(T)$, $V(T)$, $I(T)$, $R(T)$ at a fixed time horizon $T$, as well as peak values (e.g., $\max_t I(t)$) if desired. The parameters considered are the fractional order $\alpha$, the fractional parameter $\nu$ (which controls the memory kernel), and the transmission rate $\beta$. Other parameters could be included analogously.

\textit{Local sensitivity} measures the effect of a small perturbation of a parameter on the model output near a nominal point. 
For a scalar output $y$ (e.g., $I(T)$) and a parameter $p$, the local sensitivity coefficient is approximated by the forward finite difference \cite{ni2019sensitivity}
\begin{equation}
  \mathcal{S}_p = \frac{y(p+\Delta p) - y(p)}{\Delta p},
\end{equation}
where $\Delta p$ is a small relative perturbation (e.g., $1\%$ of $p$). For vector outputs $\bigl(S(T),V(T),I(T),R(T)\bigr)$, we compute the sensitivity vector componentwise. This provides a first‑order estimate of the parameter's influence and is useful for identifying parameters to which the output is most sensitive locally.

\subsection{Global Sensitivity Analysis via PRCC}
To capture nonlinear effects and parameter interactions over a wider range, we perform a global sensitivity analysis using the \textit{Partial Rank Correlation Coefficient} (PRCC) \cite{Marino2008}, which consists of the steps:

\begin{enumerate}
  \item \textbf{Parameter ranges:} Each parameter is assigned a uniform distribution over a biologically plausible interval. For the present study we choose
  \begin{equation*}
  \beta \in [0.2, 0.6], \quad \alpha \in [0.5, 1.0], \quad \nu \in [0.3, 0.7].
  \end{equation*}
  \item \textbf{Latin Hypercube Sampling (LHS):} A Latin Hypercube sample of size $N$ (here $N=300$) is generated using the MATLAB function \texttt{lhsdesign}. 
  LHS ensures efficient coverage of the parameter space by partitioning each parameter range into $N$ equally probable intervals and sampling once from each interval.
  \item \textbf{Model evaluation:} For each sample $(\beta_i,\alpha_i,\nu_i)$, $i=1,\dots,N$, the numerical solver is run and the outputs $S_i(T)$, $V_i(T)$, $I_i(T)$, $R_i(T)$ are recorded. 
  Additional outputs such as peak infected can be stored similarly.
  \item \textbf{Rank transformation:} Both the parameter values and the outputs are replaced by their ranks. 
  This transformation removes the effect of the underlying distributions and makes the analysis robust to monotonic nonlinearities.
  \item \textbf{Partial correlation:} The PRCC between a parameter $p$ and an output $y$ is the correlation coefficient of the residuals after removing the linear effects of the other parameters. Specifically, let $\hat{p}$ and $\hat{y}$ be the rank‑transformed data. Compute the residuals
  \begin{equation*}
  r_p = \hat{p} - \tilde{p}, \qquad r_y = \hat{y} - \tilde{y},
  \end{equation*}
  where $\tilde{p}$, $\tilde{y}$ are the best linear least‑squares predictions of $\hat{p}$, $\hat{y}$ from the remaining parameters. 
  The PRCC is then the Pearson correlation between $r_p$ and $r_y$. 
  In practice, this can be obtained directly from the partial correlation matrix of the rank‑transformed data using routines 
  (e.g., MATLAB's \texttt{partialcorr}).
\end{enumerate}

The PRCC ranges from $-1$ to $+1$. 
A value close to $\pm 1$ indicates a strong monotonic relationship between the parameter and the output after controlling for the other parameters, while a value near $0$ implies little influence. 
The sign reveals the direction of the effect: positive means increasing the parameter increases the output.

All simulations are performed with a time step $h=0.2$ and a final time $T=20$. This reduced time horizon avoids overflow of the Mittag–Leffler series for the chosen $\nu$ range; for $\nu > 0.5$ a warning is issued. 
The solver is implemented in MATLAB and the PRCC computation relies on the Statistics and Machine Learning Toolbox (functions \texttt{lhsdesign} and \texttt{partialcorr}). 
If the toolbox is unavailable, simple Spearman rank correlations can be used as a rough substitute, though they ignore interactions between parameters.

The results of the local and global sensitivity analyses are presented in Figures~\ref{ch5} and \ref{ch6} providing insight into which parameters most strongly drive the dynamics of each compartment. 
Such information is valuable for model reduction, parameter estimation, and design of control strategies.

\section{Numerical Results}\label{S7}
In this section, we demonstrate and discuss the computational results of our fractional SVIR model fractional derivative hybrid Mittag-Leffler-Caputo-Fabrizio (MLCF) type. 
The numerical experiments are implemented through the newly developed $\theta$-weighted NSFD 
scheme with a fully implicit formulation ($\theta = 1$) that guaranties unconditional stability and qualitative property preservation of the continuous system.
The model parameter setting based on a plausible epidemiological situation is kept constant as \cite{Kad}.
\begin{equation*} 
\Delta = 1530, \; \beta = 0.451, \; k = 0.664, 
\; \tau = 0.576, \; \mu = 0.01, \; \delta = 0.65, 
\; \delta_0 = 0.795, \; \alpha_r = 0.791. 
\end{equation*} 
The fractional orders are chosen to be $\nu = 0.7$ and $\alpha = 0.9$ respectively with the normalization factor being $M(\nu)=1$. 
Temporal discretization is taken through the nonstandard denominator function $\phi(h)=1-\mathrm{e}^{-h}$ with $h=0.01$, which leads to a non-uniform real time step thereby improving the numerical stability.
The initial data is given by, cf.\ \cite{Kad}: \begin{equation*} 
    S(0)=500, \quad V(0)=50, \quad I(0)=30, \quad R(0)=0, 
\end{equation*} 
that is a population initially with the majority of the susceptible individuals, the moderate number of the vaccinated and infected individuals.

The numerical results demonstrate that all state variables remain positive and bounded throughout the simulation, confirming the theoretical properties of positivity and boundedness. 
The susceptible population $S(t)$ initially decreases due to infection and vaccination before stabilizing at a lower level, while the vaccinated population $V(t)$ first increases and then converges to a steady state.
The infected population $I(t)$ initially rises because of disease transmission, then decreases toward a positive endemic level, consistent with $\mathcal{R}_0>1$. Meanwhile, the recovered population $R(t)$ increases as infected individuals recover and eventually approaches a constant value.

The fractional orders $\nu$ and $\alpha$ strongly affect the transient dynamics of the system. Due to the memory effect of the MLCF operator, the system evolves more smoothly and approaches equilibrium more slowly, since the present state depends on both current and past states.
Decreasing $\nu$ or $\alpha$ strengthens the memory effect, leading to lower infection peaks and longer-lasting transient dynamics, while the equilibrium states remain unchanged. Hence, the fractional orders mainly influence the convergence rate rather than the stability type.

The fully implicit NSFD scheme exhibits excellent numerical stability, with no spurious oscillations or instabilities even for long simulation times.
Moreover, the denominator function $\phi(h)$ enables the discrete model to preserve the qualitative behavior of the continuous system.
Newton’s method converges efficiently with only a few iterations per time step, demonstrating the effectiveness of the proposed approach.
In addition, the recursive memory implementation significantly lowers computational cost, making the scheme well suited for large-scale fractional simulations.

Additional simulations demonstrate the dependence of the model on key parameters such as the contact rate $\beta$, vaccination rate $k$, and fractional orders.
Increasing $\beta$ raises the infected population, whereas increasing $k$ reduces both the infection level and its endemic steady state.
The bifurcation analysis with respect to $\beta$ confirms the threshold behavior at $\mathcal{R}_0=1$, where the system shifts from a disease-free to an endemic state. 
Although the fractional memory does not change the bifurcation structure, it strongly affects transient dynamics and convergence rates.
Overall, the numerical results validate the theoretical analysis and show that the proposed fractional SVIR model effectively captures key epidemiological dynamics. 
The hybrid MLCF operator flexibly incorporates memory effects, while the NSFD scheme provides stable and accurate computations.

Figure~\ref{ch1} shows the time evolution of the susceptible, vaccinated, infected, and recovered classes for the fractional SVIR model with $\alpha=0.8$, using the fully implicit NSFD scheme ($\theta=1$) and varying $\nu$. All solutions remain positive and bounded, consistent with Theorems~\ref{thm1}--\ref{thm2}. 
As $\nu$ decreases (stronger memory), the dynamics become slower and smoother, with a reduced and delayed infection peak and a longer approach to equilibrium. 
The equilibrium values remain unchanged, indicating that $\nu$ affects only the convergence rate, not the threshold $\mathcal{R}_0$ or stability.
No oscillations or negative values appear, confirming the scheme’s robustness.

Figure~\ref{ch2} presents results for $\alpha=1$ (Caputo-Fabrizio case) with varying $\nu$ under the same scheme.
All compartments remain non-negative and bounded, again matching Theorems~\ref{thm1}--\ref{thm2}. 
Stronger memory (smaller $\nu$) slows and smooths the transient dynamics, delaying and reducing the infection peak and extending convergence time. 
Compared to $\alpha=0.8$, the $\alpha=1$ case converges faster and exhibits weaker memory effects due to the exponential kernel.
The long-term equilibria are unaffected by $\nu$, and no numerical instabilities or oscillations are observed, confirming that the NSFD scheme preserves the model structure.

Figure \ref{ch3} shows the time evolution of all compartments for fixed $\nu=0.8$ and varying $\alpha \in (0,1]$ using the fully implicit NSFD scheme:
positivity and boundedness are preserved.
As $\alpha$ decreases, memory effects strengthen and transient dynamics slow significantly: the infection peak becomes lower, delayed, and flatter, and convergence to equilibrium becomes more gradual. 
For $\alpha=1$ (Caputo-Fabrizio limit), the decay is fastest, while smaller $\alpha$ (e.g. $0.6$) produces long-range memory and prolonged early growth. The equilibrium values remain unchanged, showing that $\alpha$ affects only convergence speed, not $\mathcal{R}_0$ or the final state. 
The scheme remains stable and oscillation-free for all tested $\alpha$.

Figure~\ref{ch4} presents simulations for varying pairs of $\nu$ and $\alpha$ under the same scheme. Again, all solutions stay positive and bounded.
Reducing either parameter strengthens memory effects, leading to slower convergence, delayed and reduced infection peaks, and damped oscillations. When $\nu,\alpha \approx 1$, the system behaves like a classical integer-order model with faster dynamics and sharper peaks, whereas small values extend transient phases and smooth oscillations. 
Despite these changes, all trajectories converge to the same endemic equilibrium, confirming that $\nu$ and $\alpha$ influence only convergence behavior, not $\mathcal{R}_0$ or the final state.
The NSFD scheme remains unconditionally stable across all cases.

Figure \ref{ch5} presents the local sensitivity of the steady-state values of susceptible, vaccinated, infected, and recovered populations to a 1\,\% variation in the transmission rate $\beta$ and fractional orders $\nu$ and $\alpha$. 
The results show that $\beta$ is the dominant parameter: increasing it leads to a more than proportional rise in $I_\infty$ and a reduction in $S_\infty$ and $V_\infty$. 
In contrast, $\nu$ and $\alpha$ have only minor influence on equilibrium values, confirming that memory parameters mainly affect transient dynamics and not the long-term state. 
This agrees with the theoretical result that $\mathcal{R}_0$ and endemic levels are independent of $\nu$ and $\alpha$.

Figure \ref{ch6} shows the global sensitivity analysis using PRCC for the steady-state infected population. 
The transmission rate $\beta$ exhibits a strong positive correlation with $I_\infty$, identifying it as the key determinant of long-term infection levels. 
In contrast, $\nu$ and $\alpha$ have PRCC values near zero across their ranges, indicating negligible impact on the equilibrium. 
Overall, the analysis confirms that only $\beta$ governs the final disease burden, while the fractional orders affect only transient behavior and convergence speed.

Figure \ref{ch7} shows the stability domain of the numerical scheme for the fractional SVIR model, with green indicating stability and white instability, for fixed $\nu=0.8$, $\alpha=0.9$ and varying $\theta \in [0,1]$.
For $\theta=1$ (fully implicit), the entire domain is stable, confirming unconditional stability. As $\theta$ decreases, the stability region shrinks significantly, requiring very small time steps for explicit schemes, while $\theta=0.5$ (Crank-Nicolson) gives intermediate performance. Thus, the fully implicit NSFD method is most reliable for long-time simulations without step-size restrictions.

Figure~\ref{ch8} illustrates the dependence of the infected population $I$ on the transmission rate $\beta$ for different $\nu$ and $\alpha$ with $\theta=1$. In all cases, increasing $\beta$ raises the endemic infection level, reflecting stronger disease spread. Long-time simulations provide the asymptotic values (blue points), while red dashed lines indicate oscillatory bounds around the endemic state.

All four panels show that the fractional parameters $\nu$ and $\alpha$ strongly affect both the magnitude and oscillations of the infected population. 
For average values (e.g., $\nu=0.7$, $\alpha=0.6$ or $\alpha=1$), $I$ increases smoothly with $\beta$, indicating mostly stable dynamics with only mild oscillations.
In contrast, larger $\nu$ (e.g., $0.8$--$0.99$) leads to higher infection levels and stronger oscillations, showing increased sensitivity due to stronger memory effects.
As $\nu$ approaches one, the system behaves more like a classical integer-order model, with higher endemic levels and stronger dependence on $\beta$. 
Overall, the bifurcation results confirm that $\beta$ governs the long-term epidemic outcome, while $\nu$ and $\alpha$ mainly control the amplitude and smoothness of the dynamics, highlighting the role of memory in fractional epidemic models.

Figure~\ref{ch9} shows the bifurcation diagram of $S$, $V$, $I$, and $R$ versus $\beta$ for fixed $\nu=0.8$, $\alpha=0.8$, and $\theta=1$.
The system exhibits a transcritical bifurcation: for small $\beta$, the disease-free equilibrium holds with $I=R=0$, while for $\beta$ above the threshold ($\mathcal{R}_0=1$), $I$ and $R$ increase continuously and $S$, $V$ decrease.
The curves remain smooth, indicating a unique endemic equilibrium beyond the bifurcation point.
The fractional orders affect the steepness of the curves but not the bifurcation location, confirming that $\mathcal{R}_0$ remains the main threshold.

Figure~\ref{ch10} presents the combined effect of $\beta$ and $\alpha$ on the endemic infected level $I^*$ with $\nu=0.8$ and $\theta=1$. 
As $\beta$ increases, $I^*$ rises monotonically after crossing the threshold, confirming the same bifurcation structure.
For fixed $\beta>\beta_c$, smaller $\alpha$ slightly reduces $I^*$ due to stronger memory effects, while $\alpha\approx1$ yields higher infection levels. 
Thus, $\nu$ mainly controls convergence speed, whereas $\alpha$ has a mild but noticeable influence on the steady state, especially for large $\beta$.

\section{Conclusion}\label{S8}
In this paper, we proposed and analyzed a new fractional-order SVIR epidemic model based on the hybrid MLCF derivative, which incorporates non-singular kernels and captures memory effects in disease transmission while preserving analytical tractability. 
We established the positivity and boundedness of solutions, identified biologically feasible invariant regions, and derived the disease-free and endemic equilibria together with the basic reproduction number $\mathcal{R}_0$, which governs the persistence or extinction of the disease.

The stability analysis showed that the disease-free equilibrium is locally asymptotically stable for $\mathcal{R}_0<1$ and unstable for $\mathcal{R}_0>1$, while a unique endemic equilibrium exists and is locally stable when $\mathcal{R}_0>1$. 
Global stability was further proven using logarithmic Lyapunov functionals combined with a fractional LaSalle invariance principle, confirming that the classical threshold dynamics remain valid in the MLCF framework despite the inclusion of memory effects.

Numerically, we developed an efficient NSFD scheme for the MLCF operator, including a $\theta$-weighted formulation solved by Newton iteration. 
The method preserves positivity, boundedness, and unconditional stability in the fully implicit case, while its recursive memory structure ensures linear computational complexity. 
Numerical simulations, sensitivity studies, and bifurcation analysis demonstrated that memory effects strongly influence transient dynamics and convergence rates, although the epidemic threshold remains determined by $\mathcal{R}_0$.

\appendix
\section*{Appendix}
\section{Positivity of Solutions}
The proof of Theorem~\ref{thm1} reads as follows:
\begin{proof}
We prove that for positive initial data $S(t), V(t), I(t), R(t) \ge0$ for all $t\ge0$. 

From the model equations \eqref{eq:S}--\eqref{eq:R} and the fact that all variables are non‑negative on a short interval 
(by continuity, until a possible first zero), we have
\begin{equation*}
    \frac{I(t)}{N(t)} \le 1,\qquad R(t)\ge 0.
\end{equation*}
Hence
\begin{align*}
f_S(t) &\equiv {}^{\rm MLCF}\!D_t^{\nu,\alpha} S(t) 
= \Delta - \beta\frac{S I}{N} + \mu R - (k+\delta)S 
\ge - (k+\delta+\beta)S(t),\\
f_V(t) &\equiv {}^{\rm MLCF}\!D_t^{\nu,\alpha} V(t) 
= kS - (1-\tau)\beta\frac{V I}{N} - \delta V 
\ge - \bigl((1-\tau)\beta+\delta\bigr)V(t),\\
f_I(t) &\equiv {}^{\rm MLCF}\!D_t^{\nu,\alpha} I(t) 
= \beta\frac{SI}{N}+(1-\tau)\beta\frac{VI}{N} - (\alpha_r+\delta+\delta_0)I 
\ge - (\alpha_r+\delta+\delta_0)I(t),\\
f_R(t) &\equiv {}^{\rm MLCF}\!D_t^{\nu,\alpha} R(t) 
= \alpha_r I - (\delta+\mu)R 
\ge - (\delta+\mu)R(t).
\end{align*}

For any variable $u\in\{S,V,I,R\}$, Lemma~\ref{lem:integrated} gives
\begin{equation*}
u(t) = \frac{1-\nu}{M(\nu)}\frac{f_u(t)}{A(t)B(t)} 
      + \frac{u(0)}{B(t)} 
      + \frac{1}{B(t)}\int_0^t B'(\kappa)u(\kappa)\,d\kappa,
\end{equation*}
where $A(t)=E_\alpha\Bigl(-\frac{\nu}{1-\nu}t^\alpha\Bigr)>0$, 
$B(t)=E_\alpha\Bigl(\frac{\nu}{1-\nu}t^\alpha\Bigr)>0$ 
and $B'(t)\ge0$.

Applying the lower bound from above, 
say for $S$:
\begin{equation*}
   S(t) \ge \frac{1-\nu}{M(\nu)}\frac{-L S(t)}{A(t)B(t)} 
        + \frac{S(0)}{B(t)} 
        + \frac{1}{B(t)}\int_0^t B'(\kappa)S(\kappa)\,d\kappa,
\end{equation*}
with $L = k+\delta+\beta$.  Rearranging,
\begin{equation}\label{S:ineq}
   S(t)\Bigl(1 + \frac{1-\nu}{M(\nu)}\frac{L}{A(t)B(t)}\Bigr)
   \ge \frac{S(0)}{B(t)} + \frac{1}{B(t)}\int_0^t B'(\kappa)S(\kappa)\,d\kappa.
\end{equation}
We define
\begin{equation*}
    \Phi_S(t) = S(t)\Bigl(1 + \frac{1-\nu}{M(\nu)}\frac{L}{A(t)B(t)}\Bigr),
\qquad
\Phi_0 = \Phi_S(0) = S(0),
\end{equation*}
and
\begin{equation*}
\psi_S(t) = \frac{B'(t)}{B(t)}\,
            \Bigl(1 + \frac{1-\nu}{M(\nu)}\frac{L}{A(t)B(t)}\Bigr)^{-1} \ge 0.
\end{equation*}
Then the inequality \eqref{S:ineq} becomes
\begin{equation*}
\Phi_S(t) \ge \Phi_0 - \int_0^t \psi_S(\kappa)\,\Phi_S(\kappa)\,d\kappa,
\end{equation*}
which is exactly the hypothesis of Lemma~\ref{lem:gronwall} (with $c=1$ and $\phi(t)=\Phi_S(t)$).

Next, since $\Phi_S\in C([0,T])$, $\Phi_0\ge 0$ and $\psi_S(t)\ge 0$, Lemma~\ref{lem:gronwall} implies
$\Phi_S(t)\ge 0$ for all $t\in[0,T]$. 
Because the factor 
$\bigl(1+\frac{1-\nu}{M(\nu)}\frac{L}{A(t)B(t)}\bigr)$ is strictly positive, we obtain $S(t)\ge 0$ for all $t$.

We finally note that the same reasoning applied to $V,I,R$ yields
\begin{equation*}
  \Phi_V(t) \ge \Phi_{V(0)} - \int_0^t \psi_V(\kappa)\Phi_V(\kappa)\,d\kappa,\quad 
  \Phi_I(t) \ge \Phi_{I(0)} - \int_0^t \psi_I(\kappa)\Phi_I(\kappa)\,d\kappa,\quad 
\end{equation*}
\begin{equation*}
   \Phi_R(t) \ge \Phi_{R(0)} - \int_0^t \psi_R(\kappa)\Phi_R(\kappa)\,d\kappa,
\end{equation*}
with suitable positive coefficients $\psi_V,\psi_I,\psi_R\ge 0$ and non-negative initial values.
Lemma~\ref{lem:gronwall} then forces each of $V(t)$, $I(t)$, $R(t)$ to remain non‑negative.
All compartments stay non‑negative for all $t\ge 0$. 
Thus, the solution of the fractional SVIR model \eqref{eq:S}--\eqref{eq:R} is non-negative, as claimed. 
\end{proof}

\section{Boundedness of Solutions}
The proof of Theorem~\ref{thm2} reads as follows:
\begin{proof}
From the positivity theorem we already have $S(t),V(t),I(t),R(t)\ge0$, so $N(t)\ge0$.
Summing the four equations of the model gives
\begin{equation*}
^{\text{\rm MLCF}}\!D_t^{\nu,\alpha} N(t) = \Delta - \delta N(t) - \delta_0 I(t).
\end{equation*}
Since $I(t)\ge0$, we obtain the inequality
\begin{align}
^{\text{\rm MLCF}}\!D_t^{\nu,\alpha} N(t) \le \Delta - \delta N(t). \label{B1}
\end{align}

Next, we consider the auxiliary linear problem
\begin{equation*}
^{\text{\rm MLCF}}\!D_t^{\nu,\alpha} \overline{N}(t) = \Delta - \delta \overline{N}(t),\qquad \overline{N}(0)=N(0).
\end{equation*}
Because the MLCF derivative has a completely positive kernel, the comparison principle holds: 
if $u(0)\le v(0)$ and $^{\text{\rm MLCF}}\!D_t^{\nu,\alpha}u(t) \le ^{\text{\rm MLCF}}\!D_t^{\nu,\alpha}v(t)$ for all $t\ge0$, then $u(t)\le v(t)$ for all $t$. 
Applying this with $u=N$ and $v=\overline{N}$ and using \eqref{B1} together with $N(0)=\overline{N}(0)$ yields
\begin{equation*}
    N(t) \le \overline{N}(t) \quad \text{for all}\; t\ge0.
\end{equation*}

It remains to bound $\overline{N}(t)$. The equation for $\overline{N}$ is linear and can be solved explicitly (e.g., by Laplace transform). 
The solution is
\begin{equation*}
\overline{N}(t) = \frac{\Delta}{\delta} + \Bigl(N(0)-\frac{\Delta}{\delta}\Bigr) \Phi(t),
\end{equation*}
where $\Phi(t)$ is a positive function satisfying $0\le \Phi(t)\le 1$ and $\Phi(0)=1$ (specifically, $\Phi(t) = E_\alpha(-\lambda t^\alpha)/[E_\alpha(\lambda t^\alpha)]$ in a certain combination).
Consequently,
\begin{equation*}
    \overline{N}(t) \le \max\Bigl\{N(0),\,\frac{\Delta}{\delta}\Bigr\},
\end{equation*}
and thus,
\begin{equation*}
   N(t) \le \max\Bigl\{N(0),\,\frac{\Delta}{\delta}\Bigr\}.
\end{equation*}

Since each compartment is nonnegative and $S,V,I,R \le N$, they are all bounded by the same constant. \end{proof}

\section{Local Stability of DFE}
The proof of Theorem~\ref{thm3} reads as follows:
\begin{proof}
The Jacobian matrix of the right‑hand side vector $\mathbf{f}=(f_1,f_2,f_3,f_4)$ at a generic point is computed and then evaluated at $E_0$. Because $I=R=0$ at $E_0$, many terms vanish. The result is
\begin{equation*}
J(E_0)=\begin{pmatrix}
-(k+\delta) & 0 & -\beta\frac{S_0}{N_0} & \mu \\[4pt]
k & -\delta & -(1-\tau)\beta\frac{V_0}{N_0} & 0 \\[4pt]
0 & 0 & \beta\frac{S_0+(1-\tau)V_0}{N_0}-(\alpha_r+\delta+\delta_0) & 0 \\[4pt]
0 & 0 & \alpha_r & -(\delta+\mu)
\end{pmatrix}.
\end{equation*}
The matrix is block‑triangular; its eigenvalues are the diagonal entries:
\begin{equation*}
\lambda_1 = -(k+\delta),\quad \lambda_2 = -\delta,\quad \lambda_3 = -(\delta+\mu),\quad
\lambda_4 = \beta\frac{S_0+(1-\tau)V_0}{N_0}-(\alpha_r+\delta+\delta_0)
          = (\alpha_r+\delta+\delta_0)(\mathcal{R}_0-1).
\end{equation*}
All eigenvalues are real. \\ 
$\bullet$ If $\mathcal{R}_0<1$, then $\lambda_4<0$ and all eigenvalues are negative. Hence $|\arg(\lambda_i)|=\pi > \frac{\nu\pi}{2}$ for every $i$, and the DFE is locally asymptotically stable.\\  
$\bullet$ If $\mathcal{R}_0>1$, then $\lambda_4>0$; the corresponding eigenvalue lies on the positive real axis, so $|\arg(\lambda_4)|=0 < \frac{\nu\pi}{2}$, and the DFE is unstable.  \\
$\bullet$ If $\mathcal{R}_0=1$, the linearization has a zero eigenvalue; higher‑order analysis is required, but this boundary case is usually considered as a transcritical bifurcation.
\end{proof}

\section{Local Stability of the Endemic Equilibrium}
\label{sec:endemic-stability}

\begin{proof}
Assume $\mathcal{R}_0>1$ so that a unique positive endemic equilibrium $E^*=(S^*,V^*,I^*,R^*)$ exists.  
The Jacobian of $\mathbf{f}=(f_1,f_2,f_3,f_4)$ at $E^*$ is block‑triangular:
\begin{equation*}
J(E^*)=\begin{pmatrix}
J_3 & \mathbf{b} \\
\mathbf{0}^\top & -(\delta+\mu)
\end{pmatrix},
\end{equation*}
where $J_3$ denotes the $3\times3$ submatrix corresponding to $(S,V,I)$ and $\mathbf{b}$ is a column vector.  
Thus one eigenvalue is $\lambda_1=-(\delta+\mu)<0$; the remaining eigenvalues are those of $J_3$.
We introduce the normalized variables
\begin{equation*}
u=\frac{S^*}{N^*},\quad v=\frac{V^*}{N^*},\quad i=\frac{I^*}{N^*},\quad r=\frac{R^*}{N^*},
\end{equation*}
and set $p=\beta i$, $q=\beta u$, $r=\beta v$.  
From the equilibrium conditions we have the identities
\begin{equation*}
q+(1-\tau)r = \alpha_r+\delta+\delta_0,\qquad
p\bigl(u+(1-\tau)v\bigr)=i(\alpha_r+\delta+\delta_0),\qquad
k\frac{S^*}{V^*} = (1-\tau)p+\delta.
\end{equation*}

The entries of $J_3$ are
\begin{equation*}
\begin{aligned}
a_{11}&=-(k+\delta)-p(1-u), & a_{12}&=p u, & a_{13}&=-q(1-i),\\[2pt]
a_{21}&=k+(1-\tau)p v, & a_{22}&=-\delta-(1-\tau)p(1-v), & a_{23}&=-(1-\tau)r(1-i),\\[2pt]
a_{31}&=p\bigl(1-u+(1-\tau)v\bigr), & a_{32}&=p\bigl(u+(1-\tau)(1-v)\bigr), & a_{33}&=-p\bigl(u+(1-\tau)v\bigr).
\end{aligned}
\end{equation*}

The characteristic polynomial of $J_3$ is $\lambda^3+A_1\lambda^2+A_2\lambda+A_3$ with
$A_1=-\tr(J_3)$, $A_2=\text{sum of principal minors}$, $A_3=-\det(J_3)$.

\medskip
\begin{equation*}
\tr(J_3)=-\bigl[(k+\delta)+\delta+p(1-u)+(1-\tau)p(1-v)+p(u+(1-\tau)v)\bigr]<0,
\end{equation*}
hence $A_1>0$.
%
Now, a direct computation using the equilibrium relations gives
\begin{equation*}
M_{12}=\det\begin{pmatrix}a_{11}&a_{12}\\a_{21}&a_{22}\end{pmatrix}>0,\quad
M_{13}=\det\begin{pmatrix}a_{11}&a_{13}\\a_{31}&a_{33}\end{pmatrix}>0,\quad
M_{23}=\det\begin{pmatrix}a_{22}&a_{23}\\a_{32}&a_{33}\end{pmatrix}>0.
\end{equation*}
Thus, $A_2=M_{12}+M_{13}+M_{23}>0$.
%
After a simplification, we obtain
\begin{equation*}
\det(J_3)=-p\,i\,\bigl[(k+\delta)+\delta+(k+\delta)\delta\bigr]\,
\bigl[(1-\tau)p+\delta\bigr]\,\frac{k}{\delta}\,
\Bigl(1-\frac{1}{\mathcal{R}_0}\Bigr)<0\quad\text{for }\mathcal{R}_0>1,
\end{equation*}
so $A_3=-\det(J_3)>0$.
%
A further computation yields
\begin{equation*}
A_1A_2-A_3 = \bigl(\text{positive expression}\bigr)\Bigl(1-\frac{1}{\mathcal{R}_0}\Bigr)>0.
\end{equation*}
Since $A_1,A_2,A_3>0$ and $A_1A_2>A_3$, all eigenvalues of $J_3$ have negative real parts.  
Consequently, all eigenvalues of $J(E^*)$ are real and negative.  
For the MLCF derivative, local asymptotic stability requires $|\arg(\lambda)|>\nu\pi/2$; for real negative $\lambda$ we have $\arg(\lambda)=\pi$, and $\pi>\nu\pi/2$ holds for any $0<\nu<1$.  
Hence $E^*$ is locally asymptotically stable whenever $\mathcal{R}_0>1$.
\end{proof}

\section{Global Stability of the Disease-Free Equilibrium}
The proof of Theorem~\ref{thm:DFE} reads as follows:
\begin{proof}
We define $a = \alpha_r+\delta+\delta_0$ and consider the Lyapunov function
\begin{equation*}
   L = I + \frac{\beta S_0}{a} S_0\,\varphi\Bigl(\frac{S}{S_0}\Bigr) + \frac{(1-\tau)\beta V_0}{a} V_0\,\varphi\Bigl(\frac{V}{V_0}\Bigr)
+ \frac{\mu}{\delta+\mu}R,
\end{equation*}
where $\varphi(u)=u-1-\ln u$. Each term is non‑negative and vanishes only at $E_0$.
Applying Lemma~\ref{lem:chain} to the $\varphi$ terms gives
\begin{multline*}
{}^{\rm MLCF}\!D_t^{\nu,\alpha} L \le \Bigl(1-\frac{1}{I}\Bigr){}^{\rm MLCF}\!D_t^{\nu,\alpha} I
   + \frac{\beta S_0}{a}\Bigl(1-\frac{S_0}{S}\Bigr){}^{\rm MLCF}\!D_t^{\nu,\alpha} S\\
   + \frac{(1-\tau)\beta V_0}{a}\Bigl(1-\frac{V_0}{V}\Bigr){}^{\rm MLCF}\!D_t^{\nu,\alpha} V
   + \frac{\mu}{\delta+\mu}{}^{\rm MLCF}\!D_t^{\nu,\alpha} R.
\end{multline*}
Substituting the differential equations and using the equilibrium relations at $E_0$ ($\Delta=(k+\delta)S_0$, and $\beta\frac{S_0}{N_0}+(1-\tau)\beta\frac{V_0}{N_0}=a \mathcal{R}_0$) yields, after a straightforward algebraic simplification,
\begin{equation*}
{}^{\rm MLCF}\!D_t^{\nu,\alpha} L \le -a(1-\mathcal{R}_0)I - \frac{\beta S_0}{a}\frac{(S-S_0)^2}{S} - \frac{(1-\tau)\beta V_0}{a}\frac{(V-V_0)^2}{V} - \frac{\mu\delta}{\delta+\mu}R.
\end{equation*}
Because $\mathcal{R}_0\le1$, the right‑hand side is non‑positive. Hence ${}^{\rm MLCF}\!D_t^{\nu,\alpha} L\le0$ on $\Omega$.

Moreover, ${}^{\rm MLCF}\!D_t^{\nu,\alpha} L=0$ implies $I=0$, $S=S_0$, $V=V_0$, $R=0$. The largest invariant subset of this set is $\{E_0\}$. 
By the fractional LaSalle principle (Lemma~\ref{lem:lasalle}), every solution in $\Omega$ converges to $E_0$, establishing global asymptotic stability. 
\end{proof}

\section{Global Stability of the Endemic Equilibrium}
\label{sec:global-ee}

\begin{proof}
Assume $\mathcal{R}_0>1$ so that a unique positive endemic equilibrium $E^*=(S^*,V^*,I^*,R^*)$ exists.  
Introduce the normalized variables
\begin{equation*}
s=\frac{S}{S^*},\quad v=\frac{V}{V^*},\quad i=\frac{I}{I^*},\quad r=\frac{R}{R^*},\quad n=\frac{N}{N^*}.
\end{equation*}
We define
\begin{equation*}
A=\beta\frac{S^*I^*}{N^*},\quad B=(1-\tau)\beta\frac{V^*I^*}{N^*},\quad a=\alpha_r+\delta+\delta_0,
\end{equation*}
so that $A+B=aI^*$, and set
\begin{equation*}
p=\frac{A}{A+B},\quad q=\frac{B}{A+B},\quad \sigma=\frac{\mu}{\delta+\mu}.
\end{equation*}

We consider the Lyapunov function
\begin{equation*}
\mathcal{W}=pS^*\phi(s)+qV^*\phi(v)+I^*\phi(i)+\sigma R^*\phi(r)+N^*\phi(n),
\end{equation*}
where $\phi(x)=x-1-\ln x\ge0$ with equality only at $x=1$.  
$\mathcal{W}$ is non‑negative and vanishes only at $E^*$.
Using Lemma~\ref{lem:chain} (the chain‑rule inequality for the MLCF derivative) we obtain
\begin{equation*}
\begin{split}
{}^{\rm MLCF}\!D_t^{\nu,\alpha}\mathcal{W}&\le 
p\Bigl(1-\frac1s\Bigr){}^{\rm MLCF}\!D_t^{\nu,\alpha}S
+q\Bigl(1-\frac1v\Bigr){}^{\rm MLCF}\!D_t^{\nu,\alpha}V
+\Bigl(1-\frac1i\Bigr){}^{\rm MLCF}\!D_t^{\nu,\alpha}I\\
&\qquad+\sigma\Bigl(1-\frac1r\Bigr){}^{\rm MLCF}\!D_t^{\nu,\alpha}R
+\Bigl(1-\frac1n\Bigr){}^{\rm MLCF}\!D_t^{\nu,\alpha}N.
\end{split}
\end{equation*}

Now, we substitute the right‑hand sides of the model \eqref{eq:SVIR}
and the equilibrium relations \eqref{Equilibrium}. 
After a straightforward but lengthy algebraic manipulation (which follows exactly the integer‑order case; see e.g. the appendix of \cite{ShuaiDriessche2013}), all cross terms cancel and we are left with
\begin{equation*}
{}^{\rm MLCF}\!D_t^{\nu,\alpha}\mathcal{W}\le 
-\delta N^*\frac{(n-1)^2}{n}
-A\frac{(s-1)^2}{s}
-qkS^*\frac{(s-v)^2}{sv}
-\mu R^*\frac{(r-1)^2}{r}
-\frac{\delta_0 I^*}{n}(n-1)^2.
\end{equation*}
Each term on the right‑hand side is non‑positive. Hence ${}^{\rm MLCF}\!D_t^{\nu,\alpha}\mathcal{W}\le0$ on $\interior(\Omega)$.

Equality ${}^{\rm MLCF}\!D_t^{\nu,\alpha}\mathcal{W}=0$ holds only when $n=1$, $s=1$, $v=1$, $r=1$. Substituting these into the $I$‑equation (or the equilibrium conditions) forces $i=1$. Thus the largest invariant subset of $\{{}^{\rm MLCF}\!D_t^{\nu,\alpha}\mathcal{W}=0\}$ is the singleton $\{E^*\}$.

By the fractional LaSalle invariance principle (Lemma~\ref{lem:lasalle}), every solution starting in $\interior(\Omega)$ converges to $E^*$. Therefore $E^*$ is globally asymptotically stable whenever $\mathcal{R}_0>1$. 
\end{proof}
\end{document}